\theoremstyle{plain}
\newtheorem*{theorem*}{Theorem} 
\newtheorem{theorem}{Theorem}[section]
\newtheorem{lemma}[theorem]{Lemma}
\newtheorem{corollary}[theorem]{Corollary}
\theoremstyle{definition}
\newtheorem{definition}[theorem]{Definition}
\newtheorem{remark}[theorem]{Remark}
\newcommand{\pmobius}{{\mathsf M}}
\newcommand{\pklein}{{\mathsf K}}
\newcommand{\sdisk}{{\mathsf D}} 
\newcommand{\Disk}[1]{D_{\!#1}}
\newcommand{\disk}[1]{d_{\!#1}}
\newcommand{\xA}{x_{\!A}}
\newcommand{\geodisk}{{\mathfrak{D}}}
\newcommand{\picirc}{\pi_\circ}
\newcommand{\rmdisk}{\mathrm{D}}
\newcommand{\Sbb} {\mathbb S}
\newcommand{\sphere} {\Sbb^3}
\newcommand{\ball}{{\mathbf B}}
\newcommand{\asphere}{{\mathbf S}}
\newcommand{\HK}{{\rm HK}}
\newcommand{\HKp}{{\rm HK^+}}
\newcommand{\HKn}{{\rm HK^-}}
\newcommand{\emK}{{\rm K_{(l,m,n,p)}}}
\newcommand{\pair}{(\sphere,\HK)}
\newcommand{\pairp}{(\sphere, \HKp)} 
\newcommand{\pairn}{(\sphere, \HKn)}
\newcommand{\pairpind}[1]{(\sphere, \HK_{(#1)}^+)} 
\newcommand{\pairpn}{(\sphere, \HK^{\pm})}
\newcommand{\pairpnind}{(\sphere, \HK^{\pm}_{(l,m,n,p)})} 
\newcommand{\pairem}{{(\sphere,\emK)}}
\newcommand{\Ra}{\mathcal{R}}
\newcommand{\Ta}{\mathcal{T}}
\newcommand{\homeo}[1]{{\rm Homeo(#1)}}
\newcommand{\minus}{{\scalebox{.57}[.57]{$-$}}}
\newcommand{\plus}{{\scalebox{.57}[.57]{$+$}}}
\newcommand{\plusminus}{{\scalebox{.57}[.57]{$\pm$}}}
\newcommand{\minusplus}{{\scalebox{.57}[.57]{$\mp$}}}
\newcommand{\slope}{\mathtt{r}}
\newcommand{\ver}{\mathsf{v}}
\newcommand{\hor}{\mathsf{h}}
\newcommand{\algint}{\mathcal{I}_a} 
\newcommand{\geoint}{\mathcal{I}}
\newcommand{\Ce}{C_e}
\newcommand{\Co}{C_o}
\newcommand{\hopf}{{\mathbf{h}}}
\newcommand{\knot}{{\mathbf{k}}}
\newcommand{\link}{{\mathbf{l}}}
\newcommand{\charM}{{\Lambda_{M}}}
\newcommand{\charE}{{\Lambda_{\Compl \HK}}}
\newcommand{\annhk}{{\Lambda_{\textsc{hk}}}}
\newcommand{\Mo}{\mathtt{M}}
\newcommand{\Se}{\mathtt{S}}
\newcommand{\Kl}{\mathtt{K}}
\newcommand{\rnbhd}[1]{\mathfrak N(#1)}
\newcommand{\openrnbhd}[1]{\mathring{\mathfrak N}(#1)}
\newcommand{\Compl}[1]{E(#1)}
\newcommand{\ComplS}{\Compl S}
\newcommand{\cb}[1]{\vert \partial_f #1\vert}  
\newcommand{\gb}[1]{g(\partial #1)}  
\newcommand{\front}{\partial_f}
\newcommand{\bsimeq}{\simeq_\partial}
\newcommand{\eoQ}{\mathbb{Q}_o^e}
\newcommand{\pR}{{\mathbb{R}^2_\circ}} 
\newcommand{\MCG}{\mathrm{MCG}}
\newcommand{\Aut}[1]{\mathrm{Aut}(#1)}
\newcommand{\Ztwo}{\mathbb{Z}^2} 
\newcommand{\bt}{{\bf t}} 
\newcommand{\bm}{{\bf m}}
\newcommand{\bv}{{\bf v}}
\newcommand{\cout}[1]   {}
\numberwithin{equation}{section}
\title[Essential annuli]{Essential annuli in genus two handlebody exteriors}
\author{Yuya Koda, Makoto Ozawa, Yi-Sheng Wang}
\address{Department of Mathematics, Hiyoshi Campus, Keio University, 4-1-1, Hiyoshi, Kohoku, Yokohama, 223-8521, Japan~ \slash ~ 
International Institute for Sustainability with Knotted Chiral Meta Matter (WPI-SKCM$^2$), Hiroshima University, 1-3-1 Kagamiyama, Higashi-Hiroshima, 739-8526, Japan}
\email{koda@keio.jp}
\address{Department of Natural Sciences, Faculty of Arts and Sciences, Komazawa University, 1-23-1 Komazawa, Setagaya-ku, Tokyo, 154-8525, Japan}
\email{w3c@komazawa-u.ac.jp}
\address{National Sun Yat-sen University, Kaohsiung 804, Taiwan}
\email{yisheng@math.nsysu.edu.tw}
\date{\today}
\begin{document}
 
\subjclass[2020]{Primary 57M50; Secondary 57M15, 57K12}
\keywords{}
\thanks{Y. K. is supported by JSPS KAKENHI Grant Numbers JP20K03588, JP21H00978,  JP23H05437 and JP24K06744.
 M. O. is partially supported by Grant-in-Aid for Scientific Research (C) (No. 17K05262), The Ministry of Education, Culture, Sports, Science and Technology, Japan. 
Y-S. W. is supported by National Sun Yat-sen University and MoST (grant no. 110-2115-M-001-004-MY3), Taiwan.}

\begin{abstract}
We classify all potential configurations of essential annuli in a genus two atoroidal handlebody exterior in the $3$-sphere, building on two recent classifications: the classification of the JSJ-graph of the 
exterior and the classification of 
essential annuli in the exterior. 
In contrast to knots, 
genus two handlebody exteriors
may contain infinitely many non-isotopic essential annuli, due to the JSJ-graph classification.
Our main result characterizes the numbers of different types of essential annuli in such an infinite family.  
\end{abstract}
\maketitle
 
\section{Introduction}\label{sec:intro}

Essential surfaces of non-negative 
Euler characteristic play an essential
role in $3$-manifold topology; many theorems of fundamental importance, 
such as the prime decomposition by Kneser and Milnor \cite{Kne:29}, \cite{Mil:62}, the characteristic compression body by Bonahon \cite{Bon:83}, and 
the JSJ-decomposition by Jaco-Shalen  \cite{JacSha:79} and Johannson \cite{Joh:79}, and Thurston's hyperbolization \cite{Thu:82}, are based on the existence and non-existence of such surfaces. 
They play a crucial part not only in 
$3$-manifold classification, but also in the study of $3$-manifold mapping class groups---notably, the Dehn subgroup conjecture, now proved for all orientable $3$-manifolds by Hong-McCullough \cite{HonMcC:13}, asserts that Dehn twists along essential surfaces of non-negative Euler characteristic generate a finite index subgroup in the mapping class group (see Johannson \cite{Joh:79}). 
 
Given a submanifold $V$ of an orientable 
$3$-manifold $M$, the mapping class group $\MCG(M,V)$ of the pair $(M,V)$ is known as the \emph{Goeritz group} when $V$ and $\overline{M-V}$ are both handlebodies, whereas in the case $M$ is the $3$-sphere $\sphere$, it is often called the \emph{symmetry group} of the embedding $V\subset \sphere$. When $M=\sphere$ and $V$ is a union of finitely many disjoint solid tori, or equivalently, $(\sphere,V)$ is a link, the symmetry group $\MCG(\sphere,V)$ 
has been extensively studied, and its structure has been determined for a great numbers of links; see Kawauchi \cite[Chapter $10$]{Kaw:96} and references therein. 
For a general $V$, in view of the Dehn subgroup conjecture, the symmetry group $\MCG(\sphere,V)$ is largely governed by essential surfaces of non-negative Euler characteristic in the exterior $\Compl V$ of $V$ in $\sphere$, and how their boundary behaves in relation to $V$; as yet though not much is understood about the symmetry group structure in general.

When $V=\HK$ is a genus two handlebody, the pair $\pair$ is called 
a genus two handlebody-knot. 
Due to 
Scharlemann \cite{Sch:04}, Akbas \cite{Akb:08}, Cho \cite{Cho:08} and the first-named author \cite{Kod:15}, 
it is known that the symmetry group $\MCG(\sphere,\HK)$ is finitely presented if $\Compl\HK:=\overline{\sphere-\HK}$ is $\partial$-reducible. 
On the other hand, based on 
the boundary behavior of essential annuli 
in relation to $\HK$, the first 
two authors \cite{KodOzaGor:15} classify essential annuli in $\Compl\HK$ into four groups, which can be further divided into ten types. 
The classification allows more systematic study of the symmetry group structure; for instance, symmetry groups of several classes of handlebody-knots with a unique essential annulus are computed in \cite{Wan:21} and \cite{Wan:23} by the third-named author. 
Further, making use of the classification, Funayoshi and the first-named author \cite{FunKod:20} obtain a finiteness result: the symmetry group $\MCG(\sphere,\HK)$ is finite if and only if
$\pair$ is atoroidal, namely, $\Compl \HK$ containing no essential tori.

While it is well-known that 
the symmetry group  
of a non-satellite knot is cyclic or dihedral; see again Kawauchi \cite{Kaw:96}), no general classification as such is known for the symmetry group of an atoroidal genus two handlebody-knot $\pair$. 
However, since the symmetry group 
$\MCG\pair$ is finite, it is expected that the group structure depends 
strongly on how characteristic annuli in $\Compl\HK$ are configured, and the configuration can be encoded in the \emph{JSJ-graph}, the dual graph of the JSJ decomposition of $\Compl\HK$.     

The JSJ-graph of the exterior of a genus two atoroidal handlebody-knot $\pair$ is classified into fourteen types in \cite{Wan:22p}\footnote{where JSJ-graph is called \emph{characteristic diagram}.}, where, 
combining with the annulus classification in \cite{KodOzaGor:15}, the third-named author investigates configuration of  
non-separating essential annuli in $\Compl\HK$. The investigation leads to some structural results on $\MCG\pair$ in the case $\Compl\HK$ 
admits a non-separating essential annulus. The JSJ-graph classification also implies, in contrast to knots, a genus two handlebody-knot exterior may admit infinitely many essential annuli, and if so, all but only one of them are separating.   

Motivated by this, the present paper 
examines separating essential annuli in 
an atoroidal genus two handlebody-knot 
exterior;
we show that when 
the exterior admits infinitely many essential annuli, all but finitely many of them are of a type in \cite{KodOzaGor:15} given by non-integral Dehn surgery on hyperbolic knots. Particularly, each such handlebody-knot gives rise to an infinite family of Eudave-Mu\~noz knots \cite{Eud:02}. 

In addition, we summarize, to our knowledge, all known results on how various types of essential annuli can be configured in a genus two atoroidal handlebody-knot exterior as a basis for further research on the symmetry group structure. This piece of information is packed in an enhanced JSJ-graph, called the \emph{relative JSJ-graph}. 
The relative JSJ-graph has also been employed to study the Gordon-Luecke problem---to what extent the handlebody exterior determines the handlebody-knot. In general, 
a genus two handlebody-knot is not determined by its exterior; examples are given in Motto \cite{Mott:90}, Lee-Lee \cite{LeeLee:12}, and Bellettini-Paolini-Wang \cite{BelPaoWan:20a}. However, the third-named author found in \cite{Wan:23p}\footnote{where relative JSJ-graph is called \emph{annulus diagram}.} that, for genus two atoroidal handlebody-knots with certain relative JSJ-graphs, the exterior does determine the handlebody-knot.

\section{Preliminaries and Main results}
\subsection*{JSJ-graph}
Recall that the JSJ-decomposition
asserts, 
for every irreducible, $\partial$-irreducible, compact, orientable $3$-manifold, there exists a surface $S$, called \emph{characteristic surface}, unique up to isotopy, consisting of essential annuli and tori such that,
first, for every component $X$ in the exterior $\Compl S:=\overline{M-\rnbhd{S}}$, either $X$ can be \emph{admissibly I-/Seifert fibered}, that is, $X$ fibered so that its frontier 
$\partial_f X$ is a union of fibers, or $X$ is \emph{simple}, namely, every essential annulus of $M$ in $X$ being isotopic to a component of $\partial_f X$, where $\rnbhd{S}$ is a regular neighborhood of 
$S$. An essential annulus or torus 
is \emph{characteristic} if it is
isotopic to a component of $S$, and is \emph{non-characteristic} otherwise.

To encode configuration of admissibly I-/Seifert fibered and simple components in 
$M$, we define 
the \emph{JSJ-graph} $\charM$ as follows: 
Assign a node to each component in $\Compl S$, and to each component $N$ of $\rnbhd{S}$, 
we assign an edge with adjacent node(s) corresponding to 
the component(s) of $\Compl S$ meeting $N$. 
To distinguish I-fibered, Seifert fibered, and simple components, we use filled squares, filled circles, and 
hollow circles for nodes representing them, respectively. 

\subsection*{Handlebody-knots}
A genus $g$ handlebody-knot $\pair$ 
is a genus $g$ handlebody $\HK$ 
in $\sphere$. The genus one handlebody-knot theory is equivalent to the study of classical knots. 
The JSJ-graph of $\pair$ is defined
to be the JSJ-graph $\charE$ of its exterior $\Compl\HK$.
The JSJ-graph of a non-satellite, non-trivial knot is rather simple: it is either \includegraphics[scale=.2]{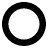} or
$\blacksquare$, which corresponds to 
a torus or hyperbolic knot, respectively.
In particular, a non-satellite knot exterior admits no characteristic annulus, and contains at most one non-characteristic annulus. 
By comparison, there are fourteen types of JSJ-graphs for genus two handlebody-knots as classified in Fig.\ \ref{tab:char_daigram}, and their exteriors may contain infinitely many non-characteristic annuli. The following 
result on number of characteristic and non-characteristic annuli from \cite{Wan:22p}; see Section \ref{sec:number} for a recollection.

\begin{theorem}{\cite[Theorem $1.1$ and Corollary $1.3$]{Wan:22p}}\label{intro:teo:number} 
Let $\pair$ be a non-trivial atoroidal genus two handlebody-knot.
Then $\Compl\HK$ admits at most three characteristic annuli,
and in addition,   
\begin{enumerate}[label=\textnormal{(\roman*)}]
\item it admits two non-characteristic annuli 
if its JSJ-graph
is one of the following
\begin{equation}\label{intro:two_nonchar}
\raisebox{.2cm}{
\includegraphics[scale=.18]{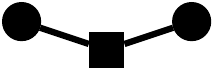} 
}
\quad,\quad
\includegraphics[scale=.18]{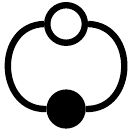}\quad,\quad
\includegraphics[scale=.18]{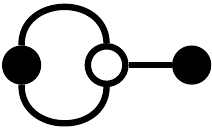}
\quad
,
\quad
\includegraphics[scale=.18]{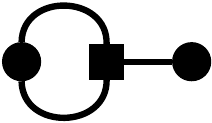}\quad
; 
\end{equation}
\item it admits infinitely many non-characteristic annuli 
if its JSJ-graph
is 
\includegraphics[scale=.18]{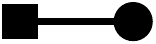};
\item no non-characteristic annuli exist in $\Compl\HK$ otherwise.
\end{enumerate}
\end{theorem}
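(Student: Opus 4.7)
The plan is to analyse the problem one JSJ piece at a time, combining the fourteen-type JSJ-graph classification of \cite{Wan:22p} with the annulus classification of \cite{KodOzaGor:15}. The first observation is that every essential annulus in $\Compl{\HK}$ can, by the characteristic-submanifold theorem, be isotoped into a single I-fibered or Seifert fibered JSJ piece $X$, and that it is non-characteristic precisely when it fails to be parallel into the frontier $\partial_f X$. In particular, simple pieces contribute no non-characteristic annuli, so the counting problem localises to the non-simple pieces of $\Compl{\HK}$.

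Next I would tabulate, for each of the fourteen JSJ-graph types, the homeomorphism class of every non-simple piece together with the slopes at which its frontier is glued to the adjacent simple pieces; this is the main structural input one has to extract from \cite{Wan:22p}. Inside an I-fibered piece $F\times I$, essential annuli correspond to essential arcs and simple closed curves in $F$, so counting non-characteristic annuli reduces to counting essential arcs in $F$ that are not frontier-parallel; inside a Seifert fibered piece, essential annuli are vertical and are determined by a pair of boundary fibres, so the count reduces to arithmetic on the base orbifold and its boundary slopes. With this setup, verifying (i) amounts to exhibiting, for each of the four listed graphs, exactly two non-frontier-parallel essential arcs or vertical annuli inside the relevant non-simple piece and ruling out any further ones by a short Euler-characteristic or orbifold argument; (iii) is the symmetric statement that, in the remaining graphs, every non-simple piece is small enough---for instance a product $A\times I$ on an annulus, or a solid-torus Seifert piece with rigid fibre slope---that every essential annulus it contains is forced to be frontier-parallel. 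For (ii) I would single out the unique non-simple piece appearing in the exceptional JSJ-graph, whose base surface or base orbifold supports an infinite family of pairwise non-isotopic essential arcs rel $\partial$, thereby producing the asserted infinite family of non-characteristic annuli.

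The bound of three characteristic annuli is orthogonal to the non-characteristic count; it follows because the characteristic surface is an embedded disjoint union of essential surfaces in $\Compl{\HK}$, and the Euler-characteristic and incompressibility constraints on the genus-two boundary $\partial\HK$ cap the number of mutually non-parallel disjoint annuli having boundary on it. The main obstacle will be the piece-by-piece bookkeeping in the middle step: from each of the fourteen JSJ-graphs one has to read off the exact homeomorphism type of every non-simple piece together with its gluing slopes before performing the arc or annulus count, and this is where the combinatorial data from \cite{Wan:22p} is indispensable and where the real labour lies; once the pieces are identified, the counting inside each is a short and standard calculation.
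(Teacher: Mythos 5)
Your overall strategy --- use the characteristic-submanifold/engulfing property to isotope every non-characteristic annulus into a fibered JSJ piece, read off from the classification in \cite{Wan:22p} which of the fourteen graphs contain a piece supporting non-$\front$-parallel essential annuli, and then count inside that piece --- is exactly the route the paper takes in Section \ref{sec:number} (the theorem itself is quoted from \cite{Wan:22p}, so only this derivation is recalled here). Lemma \ref{lm:typeKMS} isolates the three relevant situations: a piece I-fibered over a once-punctured Klein bottle $\pklein$ (the graph in (ii)), a piece I-fibered over a once-punctured M\"obius band $\pmobius$, and a Seifert-fibered solid torus meeting the characteristic surface in exactly two annuli (the graphs in (i)); in all remaining graphs every non-simple piece is I-fibered over a pair of pants or is a Seifert solid torus with one or three frontier annuli, and the vertical-horizontal theorem shows such pieces carry no non-$\front$-parallel essential admissible annuli, which gives (iii).

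The genuine gap is in your reduction inside an I-fibered piece. You model the piece as a product $F\times I$ and propose to count essential arcs in $F$ that are not frontier-parallel. Both points fail in the cases that matter: the relevant I-bundles are twisted, over the non-orientable bases $\pmobius$ and $\pklein$, and the preimage of a properly embedded essential arc in the base is a square meeting the frontier $\front X$ in two of its sides, hence is not an admissible annulus (an admissible annulus must have its entire boundary in $\partial_b X=\HK\cap X$); arcs produce essential squares or disks, not the annuli being counted. The correct objects are essential simple closed curves in the base: two-sided curves give vertical annuli, while one-sided curves give M\"obius bands whose regular-neighborhood frontiers are the separating non-characteristic annuli. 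For $\pklein$ this is precisely the infinite family of one-sided curves $\gamma_n\simeq\alpha\beta^n$ of Section \ref{subsec:typeK}, which is where the infinitude in (ii) comes from, together with the single two-sided curve $\beta$ giving the unique non-separating annulus; for $\pmobius$ one gets exactly two curves. Your arc count would either miss these or produce objects of the wrong type. Finally, the bound of three characteristic annuli is not obtained by a soft Euler-characteristic estimate on $\partial\HK$; it is read off from the full JSJ-graph classification of \cite{Wan:22p} (every graph in Fig.\ \ref{tab:char_daigram} has at most three edges), i.e.\ from the same combinatorial data you defer to for the middle step.
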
    
Note that the trivial graph Fig.\ \ref{fig:000h} corresponds to 
the case where $\Compl\HK$ admits a complete hyperbolic 
structure with totally geodesics boundary by Thurston's hyperbolization theorem. 
The JSJ-graph depends only on $\Compl\HK$, yet 
inequivalent handlebody-knots with homeomorphic exteriors abound. To capture missing information, 
the boundary behavior of characteristic annuli in relation to $\HK$ needs to be taken into account. 
\begin{figure}[t]
\begin{subfigure}{0.24\textwidth}
\centering
\includegraphics[scale=.3]{000h}
\caption{
}
\label{fig:000h}  
\end{subfigure}
\begin{subfigure}{0.24\textwidth}
\centering
\includegraphics[scale=.3]{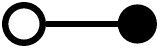}
\caption{
}
\label{fig:100h}  
\end{subfigure}
\begin{subfigure}{0.24\textwidth}
\centering
\includegraphics[scale=.3]{100i}
\caption{
}
\label{fig:100i}  
\end{subfigure}
%
\begin{subfigure}{0.24\textwidth}
\centering
\includegraphics[scale=.3]{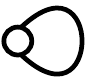}
\caption{}
\label{fig:110h}  
\end{subfigure}
\bigskip
\begin{subfigure}{0.24\textwidth}
\centering
\includegraphics[scale=.3]{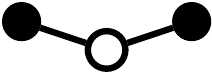}
\caption{}
\label{fig:200h}  
\end{subfigure}
\begin{subfigure}{0.24\textwidth}
\centering
\includegraphics[scale=.3]{200i}
\caption{}
\label{fig:200i}  
\end{subfigure}
%
%
\begin{subfigure}{0.24\textwidth}
\centering
\includegraphics[scale=.3]{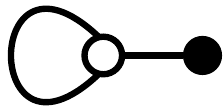}
\caption{}
\label{fig:210h}  
\end{subfigure}
\begin{subfigure}{0.24\textwidth}
\centering
\includegraphics[scale=.3]{201h}
\caption{}
\label{fig:201h}  
\end{subfigure}
\bigskip
\begin{subfigure}{0.24\textwidth}
\centering
\includegraphics[scale=.3]{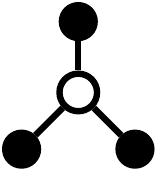}
\caption{}
\label{fig:300h}  
\end{subfigure}
%
%
\begin{subfigure}{0.24\textwidth}
\centering
\includegraphics[scale=.3]{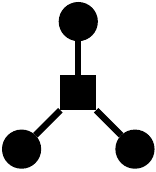}
\caption{}
\label{fig:300i}  
\end{subfigure}
\begin{subfigure}{0.24\textwidth}
\centering
\includegraphics[scale=.3]{301h}
\caption{}
\label{fig:301h}  
\end{subfigure}
\begin{subfigure}{0.24\textwidth}
\centering
\includegraphics[scale=.3]{301i}
\caption{}
\label{fig:301i}  
\end{subfigure}
\bigskip
\begin{subfigure}{0.24\textwidth}
\centering
\includegraphics[scale=.33]{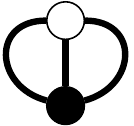}
\caption{}
\label{fig:303h}  
\end{subfigure}
\begin{subfigure}{0.3\textwidth}
\centering
\includegraphics[scale=.33]{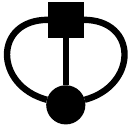}
\caption{}
\label{fig:303i}  
\end{subfigure}
\caption{Table of JSJ-graphs.
}
\label{tab:char_daigram}
\end{figure}
\subsection*{Koda-Ozawa Classification}
While an essential annulus in a knot exterior is either cabling or decomposing, there are up to ten types of 
essential annuli in a genus two handlebody-knot exterior, based on the classification in 
\cite{KodOzaGor:15} and \cite{FunKod:20}.    
Let $A$ be an essential annulus in $\Compl\HK$. 
The annulus $A$ is of \emph{type $1$} if both components of $\partial A$ bound disks in $\HK$; the existence of a type $1$ annulus implies the toroidality of $\pair$. 
The annulus $A$ is of \emph{type $2$} if exactly one component of 
$\partial A$ bounds a disk $D$ in $\HK$, 
and further $A$ is said to be of 
\emph{type $2$-$1$} if $D$ is non-separating and of \emph{type $2$-$2$} otherwise. The symbols $\hopf_\ast$ are reserved for type $2$-$\ast$ annulus, $\ast=1,2$.

The annulus $A$ is of \emph{type $3$} if 
no boundary components of $\partial A$ 
bounds disks in $\HK$, but $\partial \HK$ 
admits a compression disk $D$ in $\sphere$ 
disjoint from $A$. 
If $D$ is in the exterior $\Compl\HK$, 
then $A$ is of \emph{type $3$-$1$}. 
If $D$ is in the handlebody $\HK$, 
then there are two possibilities: 
$D$ does not separate components of $\partial A$
or it does; $A$ is said to be of
\emph{type $3$-$2$} (resp.\ \emph{type $3$-$3$}) if it is the former (resp.\ the latter); further, components of $\partial A$ are parallel (resp.\ non-parallel) if $A$ is of type $3$-$2$ (resp.\ type $3$-$3$). 
The existence of type $3$-$1$ 
annulus implies the reducibility and hence toroidality of $\pair$ \cite[Lemma $2.24$]{Wan:22p}.
On the other hand, by \cite[Lemmas $2.1$, $2.3$]{FunKod:20}, 
if $A$ is of type $3$-$2$ (resp.\ type $3$-$3$) annulus, then there exists 
a unique essential non-separating (resp.\ separating) disk $D\subset \HK$ disjoint from $\partial A$. This allows us to further divide type $3$-$2$ (resp.\ type $3$-$3$) annuli in two families: if 
$A$ is essential in the exterior of $\overline{\HK-\rnbhd{D}}$,
then it is of \emph{type $3$-$2$i (resp.\ $3$-$3$i)}, and is of \emph{type $3$-$2$ii (resp.\ $3$-$3$ii)} otherwise. The notation 
$\knot_\ast$ (resp.\ $\link_\ast$), $\ast=1,2$, 
is reserved for 
annuli of type $3$-$2\star$ (resp.\ type $3$-$3\star$), 
$\star=$i,ii, respectively.\footnote{In \cite[Section $4$]{KodOzaGor:15}, the type of M\"obius bands corresponding to type $3$-$2$ii annuli is missing. In \cite[Proof of Theorem $4.1$]{KodOzaGor:15}, this case occurs 
when $P'$ can be $\partial$-compressed onto $A$ in $\Compl Y$.}

Lastly, $A$ is of \emph{type $4$} if $\partial A$ is parallel in $\partial \HK$, and there is no compressing disk 
of $\partial\HK$ in $\sphere$ disjoint from $A$; $A$ is of \emph{type $4$-$1$} if $\pair$ 
is toroidal, and is of \emph{type $4$-$2$}
otherwise. Type $4$ annuli are intimately linked to non-integral toroidal Dehn surgery. Its original definition \cite[Section $3$]{KodOzaGor:15}, in fact, is phrased completely in terms of Eudave-Mu\~noz knots.

By the definition, when $\pair$ is atoroidal, only 
seven out of the ten types can exist, namely, types $2$-$i$, $i=1,2$,
types $3$-$2\star$, $3$-$3\star$, $\star=i,ii$, and type $4$-$1$. 
In Sections \ref{subsec:emknots} and \ref{subsec:fourone_classification}, we investigate type $4$-$1$ annuli via Eudave-Mu\~noz's tangles \cite{Eud:02}, 
and prove the following classification result. 
\begin{theorem}\label{intro:teo:hk_w_fourone}
If 
the exterior of an atoroidal genus two handlebody-knot $\pair$ admits a type $4$-$1$ annulus $A$, then $A$ is non-characteristic
and the JSJ-graph of $\pair$ is either \includegraphics[scale=.25]{100i}
or 
\includegraphics[scale=.25]{200i}.
\end{theorem}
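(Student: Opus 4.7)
The plan is to exploit the tangle description of type $4$-$1$ annuli developed in Sections~\ref{subsec:emknots}--\ref{subsec:fourone_classification}, which identifies $A$ as the annular trace on $\partial\HK$ of an essential torus that appears in a suitable Dehn filling of $\Compl\HK$ coming from the Eudave-Mu\~noz construction associated with $\HK$. The first step is to check that $A$ is separating in $\Compl\HK$, which is automatic since $\partial A$ consists of parallel curves on $\partial\HK$, so that $\Compl\HK = X_1 \cup_A X_2$ with $X_1$ the exterior of an Eudave-Mu\~noz tangle and $X_2$ an explicit admissibly fibered piece determined by the companion torus. Because $\pair$ is atoroidal by hypothesis, no essential torus survives in $\Compl\HK$, and the torus only materializes after the prescribed non-integral filling.

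To show that $A$ is non-characteristic, I would argue that $A$ is isotopic inside $X_2$ to a regular (vertical) fiber of the Seifert or I-fibration provided by the Eudave-Mu\~noz construction. Since the characteristic surface is, up to isotopy, precisely the frontier of the maximal admissibly fibered submanifold of $\Compl\HK$, and $A$ sits in the interior of the fibered piece rather than on its frontier, $A$ cannot coincide with any component of the characteristic surface.

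Combining non-characteristicness with Theorem~\ref{intro:teo:number} narrows the possible JSJ-graphs from the fourteen in Fig.~\ref{tab:char_daigram} to the four listed in~\eqref{intro:two_nonchar} together with $\includegraphics[scale=.2]{100i}$. It then remains to eliminate $\includegraphics[scale=.2]{201h}$, $\includegraphics[scale=.2]{301h}$, and $\includegraphics[scale=.2]{301i}$. I would do this by observing that the Eudave-Mu\~noz decomposition yields exactly one fibered companion piece glued to a single simple piece along $A$, so that the resulting JSJ-graph contains at most one fibered node adjacent to one simple node, which is incompatible with the extra characteristic annuli or additional simple nodes featured in the three excluded graphs.

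The hardest step will be the verification that each Eudave-Mu\~noz family $(l,m,n,p)$ realizes precisely one of the two target graphs, and furthermore that no further essential annulus inside $X_2$ forces additional JSJ cuts. Concretely, one must match the Seifert invariants and boundary slopes produced by Eudave-Mu\~noz's tangle calculus with the frontier decorations of the edges in $\includegraphics[scale=.2]{100i}$ and $\includegraphics[scale=.2]{200i}$, and then invoke Theorem~\ref{intro:teo:number} to conclude that no other JSJ-graph is available.
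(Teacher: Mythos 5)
Your overall strategy---reduce to an induced handlebody-knot of an Eudave-Mu\~noz knot via Lemma \ref{lm:fourone_emknots}, then show $A$ sits in the interior of a fibered JSJ piece---matches the paper's, but two steps of your plan have genuine gaps. First, you invoke ``the Seifert or I-fibration provided by the Eudave-Mu\~noz construction'' as if it came for free; it does not. The construction of Section \ref{subsec:emknots} only hands you the branched double cover of a tangle, and the entire content of the paper's proof of Theorem \ref{teo:fourone_classification} is to \emph{build} the fibration: one takes the preimages of explicit disks $\rmdisk_\alpha,\rmdisk_\beta,\rmdisk'$ (resp.\ $\rmdisk_{em},\rmdisk_a,\rmdisk_s$ in the $\HK^-$ case), obtains an I-bundle over a pair of pants, and then checks---by computing that the cokernel of $H_1(A')\to H_1(X')$ is $\mathbb{Z}_2$, using Lemma \ref{lm:slope}---that this I-bundle extends over the adjacent solid torus to an I-bundle over a once-punctured M\"obius band (and possibly further to a once-punctured Klein bottle, depending on whether $|l|$ or $|2lmp-lp-lm-2p+1|$ equals $2$). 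Without this, ``$A$ is interior to the fibered piece rather than on its frontier'' is unsupported: you must also rule out that $A$ is $\partial_f$-parallel, which again requires knowing the base surface and the curve over which $A$ is vertical. Note also that your two cases $X_1$, $X_2$ are misdescribed: $A$ cuts off a solid torus $V$ (whose core is the Eudave-Mu\~noz knot) on one side and a genus two handlebody on the other; this splitting is not along a JSJ annulus and carries no fibered/simple node information.

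Second, your elimination of the three graphs of \eqref{intro:two_nonchar} other than the M\"obius-band one does not work as stated. Since $A$ is non-characteristic, the decomposition $\Compl\HK=X_1\cup_A X_2$ is not the JSJ decomposition, so it cannot tell you that ``the JSJ-graph contains at most one fibered node adjacent to one simple node''; taken literally, that criterion would also exclude the once-punctured M\"obius band graph (which has two frontier annuli, hence two edges), i.e.\ it would exclude one of the two correct answers. The clean way to finish---and what the paper does---is to bypass Theorem \ref{intro:teo:number} entirely: once you have exhibited an I-bundle over a once-punctured M\"obius band or Klein bottle containing $A$ as an interior vertical annulus, Lemma \ref{lm:typeKMS} (whose three cases are mutually exclusive) pins the JSJ-graph down immediately. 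If you insist on your narrowing-and-elimination route, excluding the type $\Se$ graphs requires showing that the two non-characteristic annuli in a Seifert fibered solid torus piece with two frontier annuli are never of type $4$-$1$; that is the content of Theorem \ref{teo:typeS}, a separate argument you would need to supply rather than gesture at.
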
 
Section \ref{sec:nonchar} considers the opposite: if an atoroidal genus two handlebody-knot exterior
admit non-characteristic annuli, 
how many of them are of type $4$-$1$?

\subsection*{Relative JSJ-graph}
\begin{figure}[t] 
\begin{subfigure}{0.24\textwidth}
\centering
\includegraphics[scale=.35]{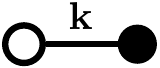}
\caption{
}
\label{fig:a100h}  
\end{subfigure}
\begin{subfigure}{0.24\textwidth}
\centering
\includegraphics[scale=.35]{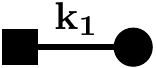}
\caption{
}
\label{fig:a100i}  
\end{subfigure}
%
\begin{subfigure}{0.24\textwidth}
\centering
\includegraphics[scale=.33]{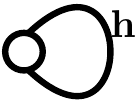}
\caption{}
\label{fig:a110h_hopf}  
\end{subfigure}
\bigskip
\begin{subfigure}{0.24\textwidth}
\centering
\includegraphics[scale=.33]{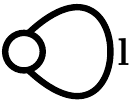}
\caption{}
\label{fig:a110h_link}  
\end{subfigure}
\begin{subfigure}{0.24\textwidth}
\centering
\includegraphics[scale=.35]{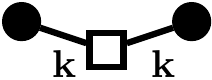}
\caption{}
\label{fig:a200hi}  
\end{subfigure}
%
%
%
\begin{subfigure}{0.24\textwidth}
\centering
\includegraphics[scale=.3]{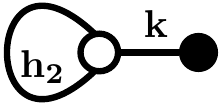}
\caption{}
\label{fig:a210h.hopf}  
\end{subfigure}
\begin{subfigure}{0.24\textwidth}
\centering
\includegraphics[scale=.3]{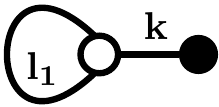}
\caption{}
\label{fig:a210h.link}  
\end{subfigure}
\begin{subfigure}{0.24\textwidth}
\centering
\includegraphics[scale=.28]{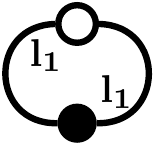}
\caption{}
\label{fig:a201h}  
\end{subfigure}

%
\begin{subfigure}{0.24\textwidth}
\centering
\includegraphics[scale=.33]{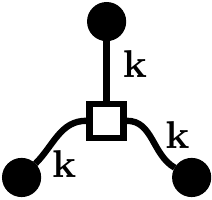}
\caption{}
\label{fig:a300hi}  
\end{subfigure}
%
%
\begin{subfigure}{0.24\textwidth}
\centering
\includegraphics[scale=.38]{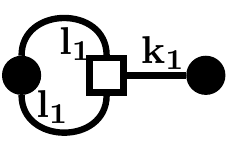}
\caption{}
\label{fig:a301hi}  
\end{subfigure}
\begin{subfigure}{0.24\textwidth}
\centering
\includegraphics[scale=.38]{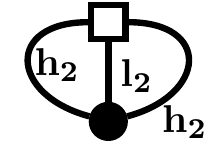}
\caption{}
\label{fig:a303hi}  
\end{subfigure}
\caption{Table of relative JSJ-graphs: the label $\hopf$ stands for 
either $\hopf_1$ or $\hopf_2$; that is, two possibilities may occur. The same applies to the labels $\knot$ and $\link$; the hollow square $\square$, likewise, means the node is either \includegraphics[scale=.2]{000h} or $\blacksquare$. For instance, Fig.\ \ref{fig:a300hi} alone accounts for eight different types.}
\label{tab:ann_daigram}
\end{figure} 
Given a non-trivial atoroidal genus two handlebody-knot $\pair$ with the exterior $\Compl\HK$ admitting 
an essential annulus, that is, excluding Fig.\ \ref{fig:000h}, 
the \emph{relative JSJ-graph} $\annhk$ of $\pair$ is defined as
the JSJ-graph $\charE$ with each edge labeled with the symbol corresponding to the type of the annulus the edge represents. 
%
Section \ref{subsec:classification} summarizes known results about the relative JSJ-graph, and gives
the following classification theorem.
 
\begin{theorem}\label{intro:teo:ann_diag}
Relative JSJ-graphs are classified into $30$ types in Fig.\ \ref{tab:ann_daigram}. 
\end{theorem}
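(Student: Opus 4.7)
The plan is to build the relative JSJ-graph $\annhk$ by labelling each edge of the JSJ-graph $\charE$ with the Koda-Ozawa type of the annulus it represents, and then to enumerate the admissible labellings graph by graph. The starting point is Theorem \ref{intro:teo:number}, which lists the fourteen possible JSJ-graphs; since we assume $\Compl\HK$ contains an essential annulus, we discard Fig.\ \ref{fig:000h} and analyse the remaining thirteen diagrams. Each edge of $\charE$ corresponds either to a characteristic annulus (an edge of $\Compl S$) or, by Theorem \ref{intro:teo:number}(i)--(ii), to a non-characteristic annulus lying in the unique I-fibered piece of Fig.\ \ref{fig:100i}, \ref{fig:200i}, \ref{fig:201h}, \ref{fig:301h}, or \ref{fig:301i}.

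For characteristic edges, the key observation is that the adjacent node types (I-fibered $\blacksquare$, Seifert fibered $\bullet$, simple $\square$) strongly restrict the possible Koda-Ozawa type. Atoroidality excludes types $1$, $3$-$1$, and $4$-$2$; thus the only candidates are $\hopf_1,\hopf_2,\knot_1,\knot_2,\link_1,\link_2$, and $4$-$1$. An edge with both endpoints at Seifert fibered nodes or adjacent to the "outer" boundary must carry a $\hopf$ label, whereas an edge whose frontier separates a non-separating disk from $\HK$ forces a $\knot$ label, and one separating $\HK$ into two solid tori forces a $\link$ label; these identifications come directly from the boundary behavior analysis in Section \ref{subsec:classification} and from \cite{Wan:23p}, where the same assignment rules are formulated under the name of \emph{annulus diagram}. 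The hollow square $\square$ in Fig.\ \ref{tab:ann_daigram} then records the two admissible node decorations (torus knot node \includegraphics[scale=.18]{000h} or hyperbolic node $\blacksquare$) coming from Thurston's hyperbolization applied to the simple piece.

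For non-characteristic edges, I would invoke Theorem \ref{intro:teo:hk_w_fourone}: a type $4$-$1$ annulus can occur only when the JSJ-graph is \includegraphics[scale=.18]{100i} or \includegraphics[scale=.18]{200i}, and in either case it is always non-characteristic. Combined with parts (i) and (ii) of Theorem \ref{intro:teo:number}, this pins down the possible non-characteristic labels exactly: infinitely many $4$-$1$ edges in Fig.\ \ref{fig:a100i}, two characteristic edges plus two non-characteristic $4$-$1$ edges when the graph is \includegraphics[scale=.18]{200i} (absorbed into Fig.\ \ref{fig:a200hi}), and only characteristic edges in the remaining cases listed in \eqref{intro:two_nonchar}.

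Finally, the enumeration is just bookkeeping: each of the remaining thirteen JSJ-graphs produces between one and several decorated variants according to the independent choices of $\hopf \in\{\hopf_1,\hopf_2\}$, $\knot\in\{\knot_1,\knot_2\}$, $\link\in\{\link_1,\link_2\}$, and $\square\in\{\includegraphics[scale=.18]{000h},\blacksquare\}$; summing these multiplicities over the eleven template diagrams in Fig.\ \ref{tab:ann_daigram} yields exactly $30$. The main obstacle will be verifying that all such label combinations are realizable and that no further compatibility relation cuts the count down; for this I would lean on the explicit model constructions in \cite{KodOzaGor:15} and the realizability results of \cite{Wan:22p, Wan:23p}, which produce, for each template, a concrete atoroidal genus two handlebody-knot attaining that relative JSJ-graph.
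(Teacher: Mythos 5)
Your proposal has the right skeleton---start from the fourteen JSJ-graphs, discard the trivial one, and constrain the admissible edge labels---but it contains a conceptual slip and misses the one part of the argument that is genuinely nontrivial. The slip: every edge of $\charE$ represents a component of the regular neighborhood of the characteristic surface, i.e.\ a \emph{characteristic} annulus, so there is no such thing as a ``non-characteristic edge.'' The actual role of Theorem \ref{teo:fourone_classification} here is that type $4$-$1$ annuli are never characteristic, hence the label $4$-$1$ is excluded from \emph{every} edge; this is what settles Figs.\ \ref{fig:a100h}, \ref{fig:a200hi} and \ref{fig:a300hi}. Relatedly, the theorem is an upper bound on the possible labelings, not a realizability statement: the Remark following it says explicitly that for several of the $30$ types no realizing handlebody-knot is known, so your closing step of ``verifying that all such label combinations are realizable'' is neither required nor currently available.

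More seriously, your proposed assignment rules for characteristic edges (Seifert nodes force $\hopf$, non-separating frontier forces $\knot$, and so on) are not the constraints that do the work, and without the correct ones the count of $30$ cannot be reached. The paper's exclusions come from specific classification lemmas: Lemma \ref{lm:typetwo} (which relative JSJ-graphs can contain a type $2$-$1$ or $2$-$2$ annulus), Lemma \ref{lm:typethreethree_ii} (where type $3$-$3$ii can occur, via the trivial-slope Lemma \ref{lm:ltwo_to_trivial_slope}), Lemma \ref{lm:theta} for the $\theta$-graph, and Lemma \ref{lm:i_stick} for Fig.\ \ref{fig:a100i}. The case your proposal does not touch at all is Fig.\ \ref{fig:a301hi}, where one must show that the non-bigon edge is of type $3$-$2$ but \emph{cannot} be of type $3$-$2$ii. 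The paper does this by cutting $\partial\HK$ along the boundary of the bigon annuli into a $4$-punctured sphere, analyzing the two possible positions of the boundary curve $l''$ of the third annulus, ruling one position out by producing a forbidden essential admissible annulus in the remaining piece, and ruling the other out by computing that $H_1(X\cup Z)/\langle [l'']\rangle$ is not $\mathbb{Z}$. Without this exclusion an extra $\knot$ choice in Fig.\ \ref{fig:a301hi} survives and the enumeration comes out wrong, so ``just bookkeeping'' does not close the argument.
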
 
 
\begin{remark}
For some relative JSJ-graphs, there are currently no known examples of handlebody-knots realizing them. 
Notably, it is unclear whether all possible types in Fig.\ \ref{fig:a300hi} occur; also, no handlebody-knots realizing 
the two possible relative JSJ-graphs in Fig.\ \ref{fig:a301hi} is known. 
The former is closely related to $3$-punctured spheres in a $3$-component link exterior (see Eudave-Ozawa \cite{EudOza:19}), and 
the latter related to once-punctured Klein bottles with non-integral slope in a knot exterior and $3$-punctured spheres in a $2$-component link exterior.  
\end{remark} 
 
\subsection*{Non-characteristic annuli}
By Theorems \ref{intro:teo:number} and 
\ref{intro:teo:ann_diag},
given an atoroidal genus two handlebody-knot $\pair$,  
the exterior  
admits a non-characteristic annulus if and only if 
its relative JSJ-graph is one of the following:
\begin{equation}\label{eq:ann_diag_nonchar}
\includegraphics[scale=.25]{r100i},\quad 
\includegraphics[scale=.25]{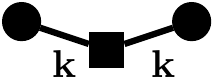},\quad
\includegraphics[scale=.2]{r201h},\quad 
\includegraphics[scale=.27]{r301hi}.
\end{equation} 
Furthermore, $\Compl\HK$ admits infinitely many non-characteristic annuli if $\annhk$ is \includegraphics[scale=.23]{r100i}, and admits exactly two non-characteristic annuli 
in the other three cases. 
Additionally, if $\Compl\HK$ admits 
a type $4$-$1$ annulus, then 
$\annhk$ is \includegraphics[scale=.23]{r100i} or \raisebox{-.3\height}{\includegraphics[scale=.23]{r200i}}. 
Section \ref{sec:nonchar} investigates
types of non-characteristic annuli   
and proves Theorems \ref{teo:typeM}, \ref{teo:typeK}, which we summarize as follows. 
\begin{theorem}\label{intro:teo:nonchar}\hfill
\begin{enumerate}[label=\textnormal{(\roman*)}]
\item\label{intro:itm:infinite} If $\Compl\HK$ admits infinitely many non-characteristic annuli (\includegraphics[scale=.2]{r100i}), then all but at most five of them are of type $4$-$1$.  
\item\label{intro:itm:two} 
If $\Compl\HK$ admits two non-characteristic annuli, then at most one of them is of type $4$-$1$.
\end{enumerate}   
\end{theorem}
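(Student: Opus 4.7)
The plan is to treat parts \ref{intro:itm:infinite} and \ref{intro:itm:two} separately, using Theorems \ref{intro:teo:number}, \ref{intro:teo:ann_diag}, and \ref{intro:teo:hk_w_fourone} as the principal inputs.

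Part \ref{intro:itm:two} splits into three sub-cases by relative JSJ-graph. For \includegraphics[scale=.2]{r201h} and \includegraphics[scale=.2]{r301hi}, the underlying JSJ-graph is neither \includegraphics[scale=.2]{100i} nor \includegraphics[scale=.2]{200i}, so Theorem \ref{intro:teo:hk_w_fourone} excludes type $4$-$1$ entirely and the conclusion is vacuous. Only \includegraphics[scale=.2]{r200i} needs genuine work; here I would argue by contradiction, supposing both non-characteristic annuli $A_1, A_2$ are of type $4$-$1$. Each $A_i$ carries the data of an Eudave-Mu\~noz tangle decomposition of $\pair$, and after minimizing intersections between the two, compatibility with the unique admissibly fibered JSJ-piece of \includegraphics[scale=.2]{200i} should force $A_1$ and $A_2$ to share the same Conway-sphere / Klein-bottle data, contradicting that they are non-isotopic.

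For Part \ref{intro:itm:infinite}, $\annhk = $ \includegraphics[scale=.2]{r100i}, and by standard JSJ-minimality the infinite family of non-characteristic annuli can, up to isotopy, be pushed into the unique admissibly fibered JSJ-piece $Y$, since the adjacent simple piece admits no essential annulus not isotopic to its frontier. The classical vertical/horizontal classification of essential annuli in Seifert fibered spaces then shows that, outside a finite exceptional set, all such annuli come from a one-parameter slope family on $Y$ whose members fall into type $4$-$1$ via the Eudave-Mu\~noz construction. The remaining six types permitted in the atoroidal regime, namely $\hopf_1, \hopf_2, \knot_1, \knot_2, \link_1, \link_2$, each carry a canonical compressing disk of $\partial\HK$ (uniqueness follows from \cite[Lemmas $2.1$, $2.3$]{FunKod:20} for the $\knot$- and $\link$-types, and from the definition for the $\hopf$-types). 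I would use this disk data, combined with the rigidity of the Seifert structure of $Y$, to pin down at most one annulus of each type, giving a naive bound of $6$. The sharper bound $5$ should then follow from a type-incompatibility argument ruling out one specific pair of these disks that cannot simultaneously embed in the base orbifold of $Y$.

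The main obstacle is the final refinement from $6$ to $5$ in Part \ref{intro:itm:infinite}: establishing the precise exclusion requires a hands-on analysis of the base orbifold of $Y$ and of how the distinguished disks attached to different types mutually constrain one another inside it. A secondary difficulty lies in the \includegraphics[scale=.2]{r200i} sub-case of Part \ref{intro:itm:two}, where I must carefully rule out two distinct Eudave-Mu\~noz tangle structures coexisting on the same handlebody-knot under a rigid JSJ skeleton.
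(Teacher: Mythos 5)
Your reduction of part \ref{intro:itm:two} to the single relative JSJ-graph whose I-fibered piece lies over a once-punctured M\"obius band is correct, but the argument you then propose for that case rests on an untenable premise. You want a contradiction from two non-isotopic type $4$-$1$ annuli carrying ``two distinct Eudave-Mu\~noz tangle structures'' on the same $\pair$; however, distinct coexisting Eudave-Mu\~noz presentations of a single handlebody-knot are not contradictory --- in the once-punctured-Klein-bottle case the paper exhibits infinitely many pairwise non-isotopic type $4$-$1$ annuli on one handlebody-knot (for $5_2$ their cores include the $(-2,3,7)$-pretzel knot and infinitely many other Eudave-Mu\~noz knots). So no contradiction of the kind you describe can be extracted from mere multiplicity. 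The paper's Theorem \ref{teo:typeM} argues directly instead: once one non-characteristic annulus is of type $4$-$1$, Lemma \ref{lm:fourone_emknots} and Theorem \ref{teo:fourone_classification} identify $\pair$ with an induced handlebody-knot $\pairp$ of an Eudave-Mu\~noz knot, and in the explicit branched double cover picture the \emph{second} non-characteristic annulus is the preimage of a disk admitting a disjoint non-separating compressing disk of $\partial\HK$; it is therefore of type $3$-$2$ by definition, hence not of type $4$-$1$.

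For part \ref{intro:itm:infinite} the counting scheme ``at most one annulus of each of the six remaining types'' is false, and with it the refinement from $6$ to $5$. First, the census is wrong: by Lemma \ref{lm:typetwo} no type $2$ annulus can occur when the JSJ-graph is the loop with an I-fibered vertex, and each separating non-characteristic annulus $A_n$ is the frontier of a regular neighborhood of a M\"obius band $M_n$, so no essential disk of $\HK$ can separate the two components of $\partial A_n$ (its boundary would have to be parallel to $\partial M_n$, producing a projective plane in $\sphere$); thus only types $3$-$2$i and $3$-$2$ii are in play for the $A_n$. Second, uniqueness-per-type fails: the paper's own example realizes \emph{two} annuli of type $3$-$2$i ($A_1,A_{-2}$) and \emph{two} of type $3$-$2$ii ($A_0,A_{-1}$), and the fact that each type $3$-$2$ annulus has a unique associated disk does not prevent different annuli of the same type from having different associated disks. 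The correct count in Theorem \ref{teo:typeK} --- at most four of the separating $A_n$ fail to be of type $4$-$1$, plus the one non-separating non-characteristic annulus, which is of type $3$-$3$ and hence never of type $4$-$1$, giving the five --- comes from an entirely different mechanism: Lemma \ref{lm:cri_fourone} shows $A_n$ is of type $4$-$1$ unless $[\partial M_n]$ is conjugate to a power of a primitive element of the free group $\pi_1(\HK)$, and an explicit computation of these words in a generating pair (via a coordinate system on the $4$-punctured sphere obtained by cutting $\partial\HK$ along the non-separating type $3$-$3$ annulus), combined with the primitivity criterion of Lemma \ref{lm:cri_not_power_pimitive}, rules out all but at most four values of $n$. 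Nothing in your outline supplies a substitute for this word-combinatorial step.
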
 
The above upper bounds are sharp; particularly,
the handlebody-knot $5_2$ in the table of Ishii-Kishimoto-Moriuchi-Suzuki \cite{IshKisMorSuz:12} attains the upper bound in Theorem \ref{intro:teo:nonchar}\ref{intro:itm:infinite}; see Section \ref{subsubsec:example}. 
The two non-characteristic annuli of 
type $3$-$2$ in the latter two cases of \eqref{eq:ann_diag_nonchar}
are classified in Theorem \ref{teo:typeS}, based on the slope of the type $3$-$3$ annuli that correspond to the edges with the label $\link_1$. 

\subsection*{Convention}
We work in the piecewise linear category.
Given a subpolyhedron $X$ of a manifold $M$, we denote by $\overline{X}$, 
$\mathring{X}$, $\front X$ and $\mathfrak{N}(X)$, 
the closure, the interior, the frontier, and a regular neighborhood of $X$ in $M$, respectively.
The \emph{exterior} $\Compl X$ of $X$ in $M$ is defined to be 
the complement of $\openrnbhd{X}$ in $M$ 
if $X\subset M$ is of positive codimension or the closure of $M-X$ otherwise,. 
Also, $\vert X\vert$ stands for the number of components in $X$.

Submanifolds of $M$ are assumed to be proper and in general position.
Given a loop (resp.\ based loop $l$) in
$M$, we use the notation $[l]$ 
to denote the homology (resp.\ homotopy) class it induces in $H_1(M)$ (resp.\ $\pi_1(M)$). 
We shall use the same notation to denote a \emph{path} in $M$ and its image, and juxtaposition for the \emph{path composition} and $\simeq$ 
for \emph{homotopic with endpoints fixed}.

Throughout the paper, the pair 
$(\sphere,K)$ denotes an embedding of a space $K$ in $\sphere$. By $\pair$, 
we understand a non-trivial, \emph{atoroidal, genus two} handlebody-knot, and $\Compl\HK$ denotes its exterior, and $\charE$ and $\annhk$ its JSJ-graph and relative JSJ-graph, respectively. 
Given a surface $S\subset\Compl\HK$ and a component $X$ in the exterior $\Compl S$ of $S\subset\Compl\HK$, by $X$ is I-/Seifert fibered,
we understand $X$ is \emph{admissibly} I-/Seifert fibered
in $\Compl\HK$. Given a solid torus $V$, $cV$ stands for the core of $V$, and an \emph{$n$-punctured surface} here is a surface with
$n$ disjoint \emph{open disks} removed.


\section{Number of essential annuli}\label{sec:number}
Here we recall how Theorem \ref{intro:teo:number}
and the table in Fig.\ \ref{tab:char_daigram} are 
derived from results in \cite{Wan:22p}. They allow us to classify handlebody-knots whose exteriors admit a non-characteristic annulus 
into three categories.

Throughout the section, $S$ is the characteristic surface
of $\Compl\HK$ and $\ComplS$ the exterior of $S$ in $\Compl\HK$. 
Given a component $X$ of $\ComplS$, we 
set $\partial_b X:=\HK\cap X=\overline{\partial X-\partial_f X}$.

We remark first that, the table in \cite[Figure $1$]{Wan:22p} does not distinguish I-fibered components from Seifert fibered ones, yet this piece of information can easily be filled in by \cite[Proposition $2.21$(i), (iv)]{Wan:22p}, which asserts that
$\Compl\HK$ contains a unique component 
$X$ with the genus $\gb X=2$, and it is either
I-fibered or simple, and 
every other component $Y$ has $\gb Y=1$, and is adjacent to $X$. Thus, every $\charE$ 
has exactly one filled square or hollow circle with every edge adjacent to it.

Furthermore, it is shown in \cite[Proposition $2.21$(ii)]{Wan:22p} that every component $Y$ with $\gb Y=1$ is a Seifert fibered solid torus, and it follows from Fig.\ \ref{tab:char_daigram} that the number $\cb Y\leq 3$. Additionally, $Y$ has an exceptional fiber if and only if $\cb Y=1$ or $2$ by \cite[Theorem $3.14$]{Wan:22p}.

On the other hand,
by \cite[Proposition $2.21$(v)]{Wan:22p}, 
an I-fibered component is I-fibered
over a pair of pants, or over a punctured M\"obius band, or over a punctured Klein bottle.    

\begin{definition}
Given $X$ a component of $\ComplS$,
then an annulus $A\subset X$
is \emph{admissible} 
if $\partial A\subset \partial_b X$.

An admissible annulus $A\subset X$ is \emph{essential} 
if there exists no disk in $D\subset X$ 
such that $D\cap(\partial_b X\cup A)=\partial D$ 
with $D\cap A\subset A$ an essential arc or circle, and 
is \emph{$\partial_f$-parallel} if $A$ and a component of $\partial_f X$ is isotopic in $\Compl\HK$ 
via an isotopy in $X$.  
\end{definition}

By the engulfing property \cite[Corollary $10.10$]{Joh:79}, 
every non-characteristic annulus in $\Compl\HK$ 
is isotopic to a non-$\front$-parallel 
essential annulus in an I-/Seifert fibered component $X\subset\ComplS$. Conversely, every non-$\front$-parallel essential annulus in an I-/Seifert fibered component $X$ in $\ComplS$ is a non-characteristic annulus in $\Compl\HK$. Also, since 
no two components in $S$ are parallel in $\Compl\HK$, two admissible, non-$\partial_f$-parallel essential annuli are isotopic in $X$ 
if and only if they are isotopic in $\Compl\HK$.

%
On the other hand, by the vertical-horizontal theorem \cite[Proposition $5.6$]{Joh:79}, 
a component $X\subset\ComplS$ 
admits a non-$\front$-parallel, essential admissible annulus
if and only if $X$ is I-fibered over a once-punctured 
Klein bottle or over a once-punctured M\"obius band, or 
$X$ is a Seifert fibered solid torus with $\cb X=2$. 
Going through each item in Fig.\ \ref{tab:char_daigram}, we see that
the three cases are mutually exclusive:  
\begin{lemma}\hfill\label{lm:typeKMS}
\begin{enumerate}[label=\textnormal{(\roman*)}]
\item\label{itm:typeK} $\ComplS$ admits an 
I-fibered component over a 
once-punctured Klein bottle if and only if $\charE$ is 
\includegraphics[scale=.18]{100i};
\item\label{itm:typeM} $\ComplS$ admits an 
I-fibered component over a once-punctured 
M\"obius band if and only if $\charE$ is 
\includegraphics[scale=.18]{200i}; 
\item\label{itm:typeS} $\ComplS$ admits a Seifert fibered component $X$  
with $\cb X=2$ if and only if it is 
\raisebox{-.3\height}{\includegraphics[scale=.16]{201h}}
or \raisebox{-.35\height}{\includegraphics[scale=.2]{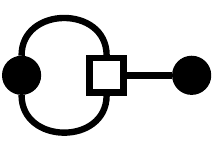}},
where 
\raisebox{-.1\height}{\includegraphics[scale=.23]{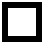}} is $\blacksquare$ or \includegraphics[scale=.2]{000h}.
\end{enumerate}
\end{lemma}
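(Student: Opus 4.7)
The plan is to establish all three equivalences in parallel via a case analysis on the $14$ JSJ-graphs in Fig.~\ref{tab:char_daigram}, using the structural facts already recalled: $\ComplS$ has a unique component $X_{0}$ with $\gb{X_{0}}=2$, which is either simple (filled-square node) or I-fibered (hollow-circle node) over a pair of pants, a once-punctured M\"obius band, or a once-punctured Klein bottle; every other component is a Seifert fibered solid torus with $\gb{Y}=1$ and $\cb{Y}\in\{1,2,3\}$ equal to the valence of the corresponding filled-circle node.

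First I would compute the valence of the hollow-circle node for each of the three admissible I-fibrations. Since $\partial_{f}X_{0}$ equals the vertical boundary of the I-bundle over the base surface $F$, and each boundary circle of $F$ is orientation-preserving in $F$ (being either a genuine boundary circle of a pair of pants or a small loop around a puncture in the two non-orientable cases), every boundary circle of $F$ contributes a single annular component to $\partial_{f}X_{0}$. Consequently the hollow-circle node has valence $3$, $2$, or $1$ according as $F$ is a pair of pants, a once-punctured M\"obius band, or a once-punctured Klein bottle; in particular the three I-fibration types are mutually exclusive.

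Next I would match these local patterns against Fig.~\ref{tab:char_daigram}. By direct inspection, the unique graph with a hollow-circle node of valence $1$ is \raisebox{-.1\height}{\includegraphics[scale=.18]{100i}}, the unique graph with a hollow-circle node of valence $2$ is \raisebox{-.1\height}{\includegraphics[scale=.18]{200i}}, and the graphs containing a filled-circle node of valence $2$ are precisely \raisebox{-.3\height}{\includegraphics[scale=.16]{201h}} and \raisebox{-.35\height}{\includegraphics[scale=.2]{301hi}} (the latter with its central node being either a filled square or a hollow circle). Each identification yields both directions of the corresponding equivalence, since the presence of such a node in the graph forces the component represented by that node to have the prescribed fibered type.

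The main obstacle is essentially bookkeeping: verifying the valence at every node of all $14$ graphs in Fig.~\ref{tab:char_daigram} and confirming that no alternative fibered structure is compatible with any other graph. This should be straightforward because the shape of each node already encodes $\gb{\,\cdot\,}$ and the fibered type, while its valence encodes either the number of boundary components of the base surface (for hollow circles) or the value of $\cb{Y}$ (for filled circles $Y$).
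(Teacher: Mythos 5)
Your argument is correct and follows essentially the same route as the paper, which likewise obtains the lemma by recalling from \cite{Wan:22p} that the unique genus-two component of $\ComplS$ is simple or I-fibered over a pair of pants, a once-punctured M\"obius band, or a once-punctured Klein bottle, that every other component is a Seifert fibered solid torus $Y$ with $\cb Y$ equal to the valence of its node, and then inspecting the fourteen graphs of Fig.~\ref{tab:char_daigram}; your explicit valence count (one frontier annulus for each boundary circle of the base surface, since boundary circles are orientation-preserving) is a correct elaboration of that inspection. The only slip is notational: the paper's convention assigns filled squares to I-fibered components and hollow circles to simple ones, so your parenthetical labels are swapped, but this does not affect the substance of the matching or the conclusions.
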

 
\begin{definition}
$\pair$ is said to be of type $\Kl$, $\Mo$ or $\Se$, 
if it is the case \ref{itm:typeK}, \ref{itm:typeM} or \ref{itm:typeS} in Lemma \ref{lm:typeKMS}, respectively.
\end{definition} 
 
\subsection{Type $\Kl$}\label{subsec:typeK}
Let $\pair$ be of type $\Kl$, and 
$X\subset \Compl\HK$ 
the I-fibered component. Denote by 
$\pi:X\rightarrow \pklein$ the bundle projection
over a once-punctured Klein bottle $\pklein$. Then 
$\Compl\HK$ admits infinitely many non-characteristic annuli, up to isotopy, since 
$X$ admits infinitely many essential, non-$\front$-parallel admissible annuli. 
Moreover, exactly one of them is non-separating.
By the horizontal-vertical theorem, these 
non-characteristic annuli can be obtained as follows: 
Choose two oriented simple loops $\alpha,\beta\subset \pklein$
as in Fig.\ \ref{fig:pklein}, 
and let $\gamma_n$ be a simple loop homotopic 
to $\alpha\beta^n$ (see Fig.\ \ref{fig:pklein_eg}). 
Then $M_n:=\pi^{-1}(\gamma_n)$ 
is an essential M\"obius band in $\Compl\HK$.
Denote by $A_n$ the frontier of a regular neighborhood 
of $M_n\subset \Compl\HK$. Then $\{A_n\}_{ n\in\mathbb{Z}}$ gives us all the \emph{separating}, non-characteristic annuli in $\Compl\HK$; the unique \emph{non-separating} annulus in
$\Compl\HK$ is given by the preimage $\pi^{-1}(\beta).$

\begin{figure}[b]
\begin{subfigure}{.48\linewidth}
\centering
\begin{overpic}[scale=.18,percent]{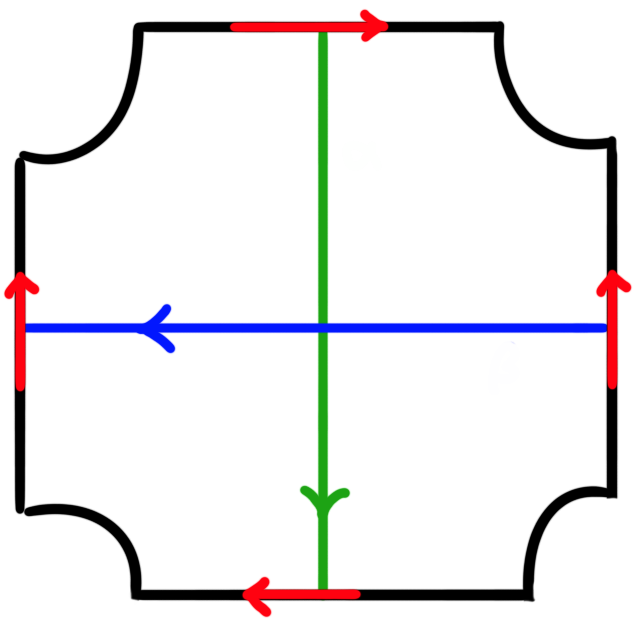}
\put(55,22){\large $\alpha$}
\put(32,50){\large $\beta$}
\end{overpic}
\caption{$\alpha,\beta$ in $\pklein$.}
\label{fig:pklein}
\end{subfigure}
\begin{subfigure}{.48\linewidth}
\centering
\begin{overpic}[scale=.18,percent]{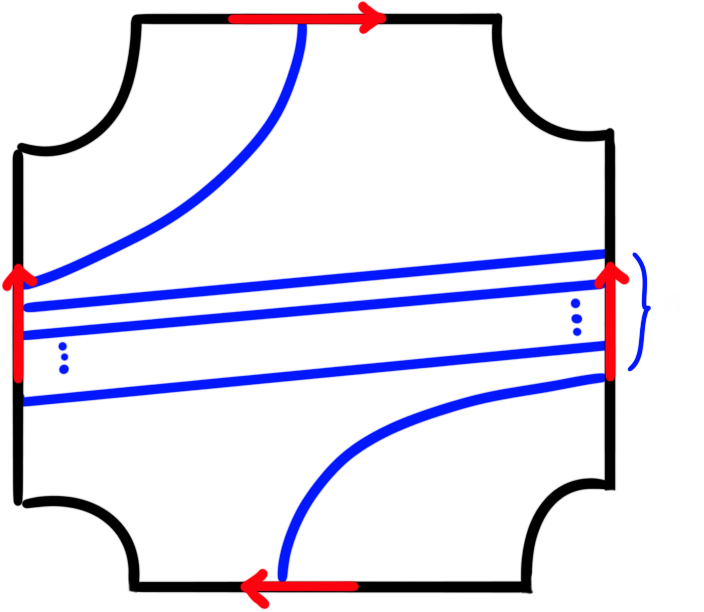}
\put(93,42){\large $n$}
\end{overpic}
\caption{$\gamma_n\simeq \alpha\beta^n$ in $\pklein$.}
\label{fig:pklein_eg}
\end{subfigure}
\caption{Non-characteristic annuli: type $\Kl$.}
\end{figure}  

\subsection{Type $\Mo$}\label{subsec:typeM}
Let $\pair$ be of type $\Mo$, and  
$X\subset \Compl\HK$ 
the I-fibered component. Denote by 
$\pi:X\rightarrow \pmobius$ the bundle projection
over a once-punctured M\"obius band $\pmobius$. 
Then up to isotopy, 
$\Compl\HK$ admits two non-characteristic annuli, since $X$ admits two essential, non-$\front$-parallel admissible annuli, which are given by the frontier of a regular neighborhood 
of the preimage of the circles $\alpha,\beta\subset\pmobius$ under $\pi$ in Fig.\ \ref{fig:pmobius}.

\subsection{Type $\Se$}\label{subsec:typeS}
Let $\pair$ be of type $\Se$, and 
$X\subset \Compl\HK$ the Seifert fibered component. Denote by $\pi:X\rightarrow \sdisk$ 
the fibration over
the disk $\sdisk$ with singularity $s\in\sdisk$ (see Fig.\ \ref{fig:sdisk}). 
Then $\Compl\HK$ admits two non-characteristic annuli, up to isotopy, given by 
the preimage of the circles $\alpha,\beta\subset \sdisk$ in Fig.\ \ref{fig:sdisk} under $\pi$.

\begin{figure}[t]
\begin{subfigure}[t]{.48\linewidth}
\centering
\begin{overpic}[scale=.15,percent]{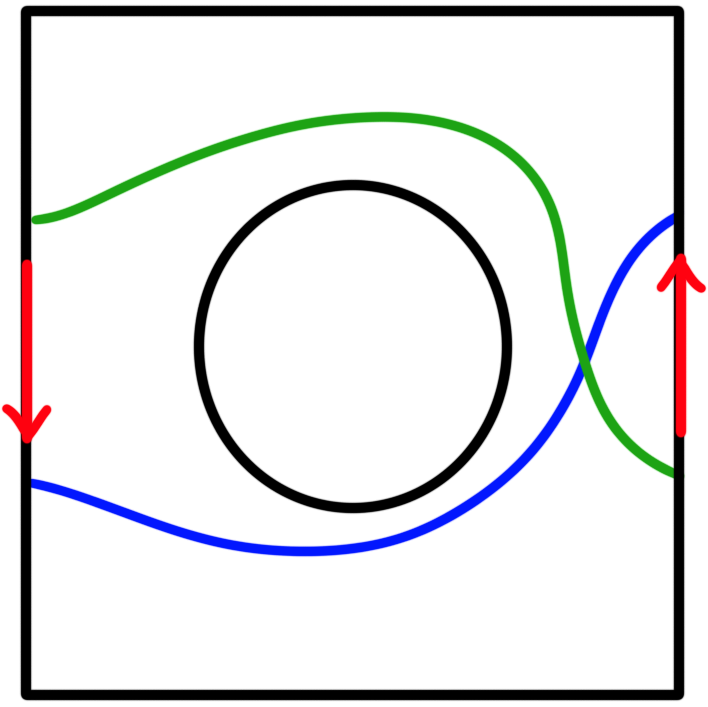}
\put(22,80){\large $\alpha$}
\put(22,15){\large $\beta$}
\end{overpic}
\caption{$\alpha,\beta\subset\pmobius$.}
\label{fig:pmobius}
\end{subfigure}
\begin{subfigure}[t]{.48\linewidth}
\centering
\begin{overpic}[scale=.15,percent]{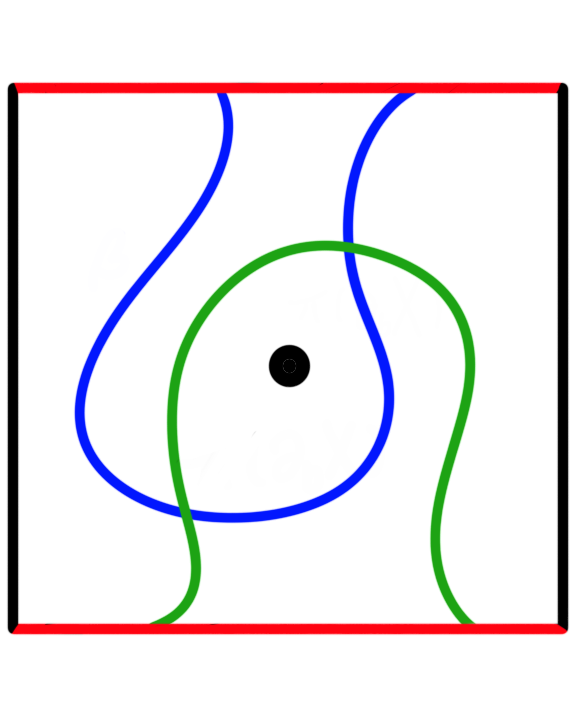}
\put(53,20){\large $\alpha$}
\put(20,70){\large $\beta$}
\put(26,1.5){$\pi(\partial_b X)$}
\put(26,92){$\pi(\partial_b X)$}
\end{overpic}  
\caption{$\alpha,\beta\subset\sdisk$.}
\label{fig:sdisk}
\end{subfigure}
\caption{Non-characteristic annuli: types $\Mo$ and $\Se$.}
\end{figure}



%


\section{Type $4$-$1$ annuli}\label{sec:fourone}

\subsection{Eudave-Mu\~noz knots}\label{subsec:emknots}
\begin{figure}[b]
\begin{subfigure}[t]{.47\linewidth}
\centering
\begin{overpic}[scale=.12,percent]{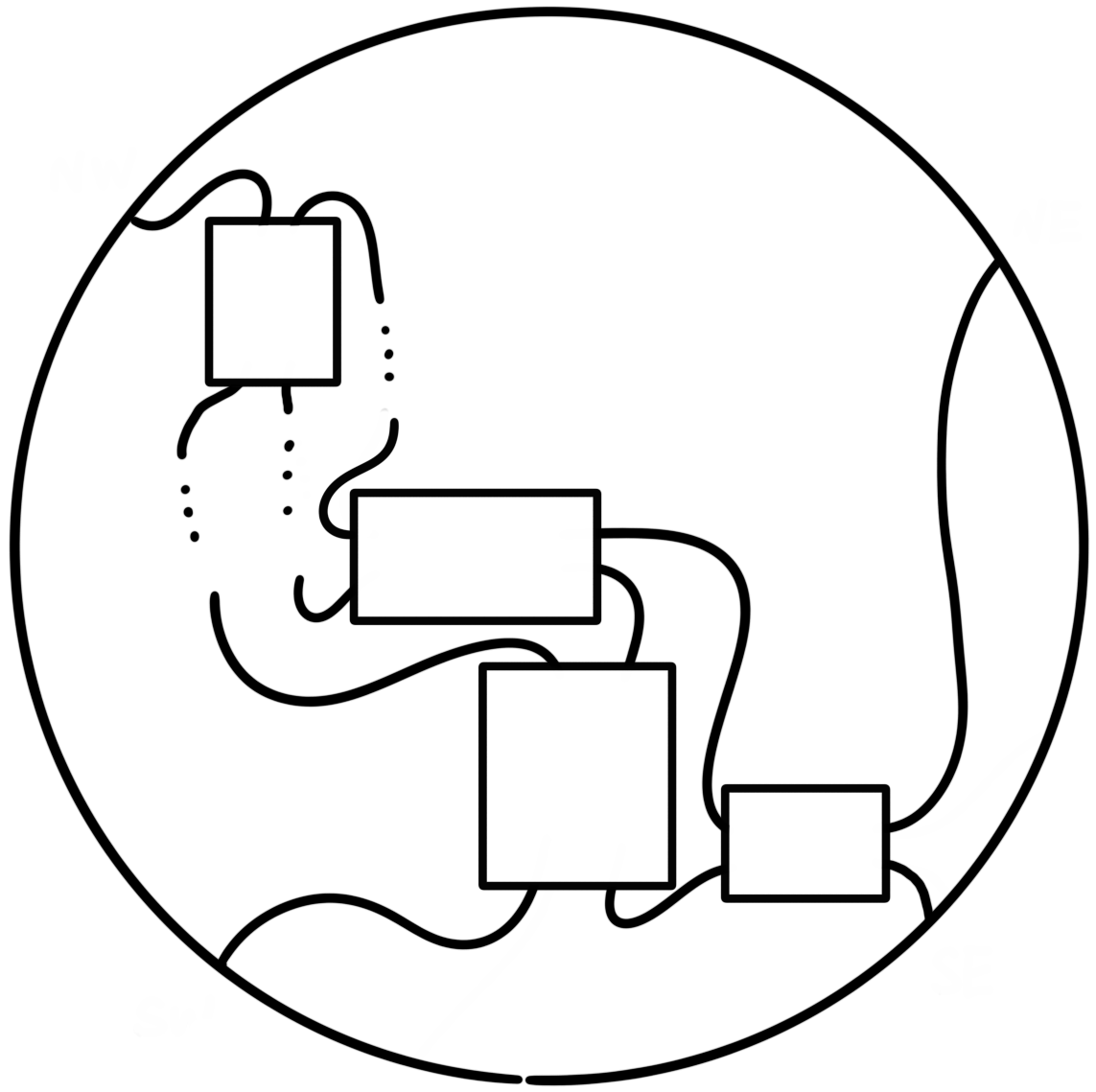}
\put(70,21){$a_n$}
\put(45,27){$a_{n-1}$}
\put(37,47){$a_{n-2}$}
\put(22,70){$a_1$}
\put(91,77){\footnotesize $\mathit{NE}$}
\put(85,11){\footnotesize $\mathit{SE}$}
\put(2,81){\footnotesize $\mathit{NW}$}
\put(10,5.3){\footnotesize $\mathit{SW}$}
\end{overpic}
\caption{$n$ is even.}
\label{fig:even}
\end{subfigure} 
\begin{subfigure}[t]{.47\linewidth}
\centering
\begin{overpic}[scale=.12,percent]{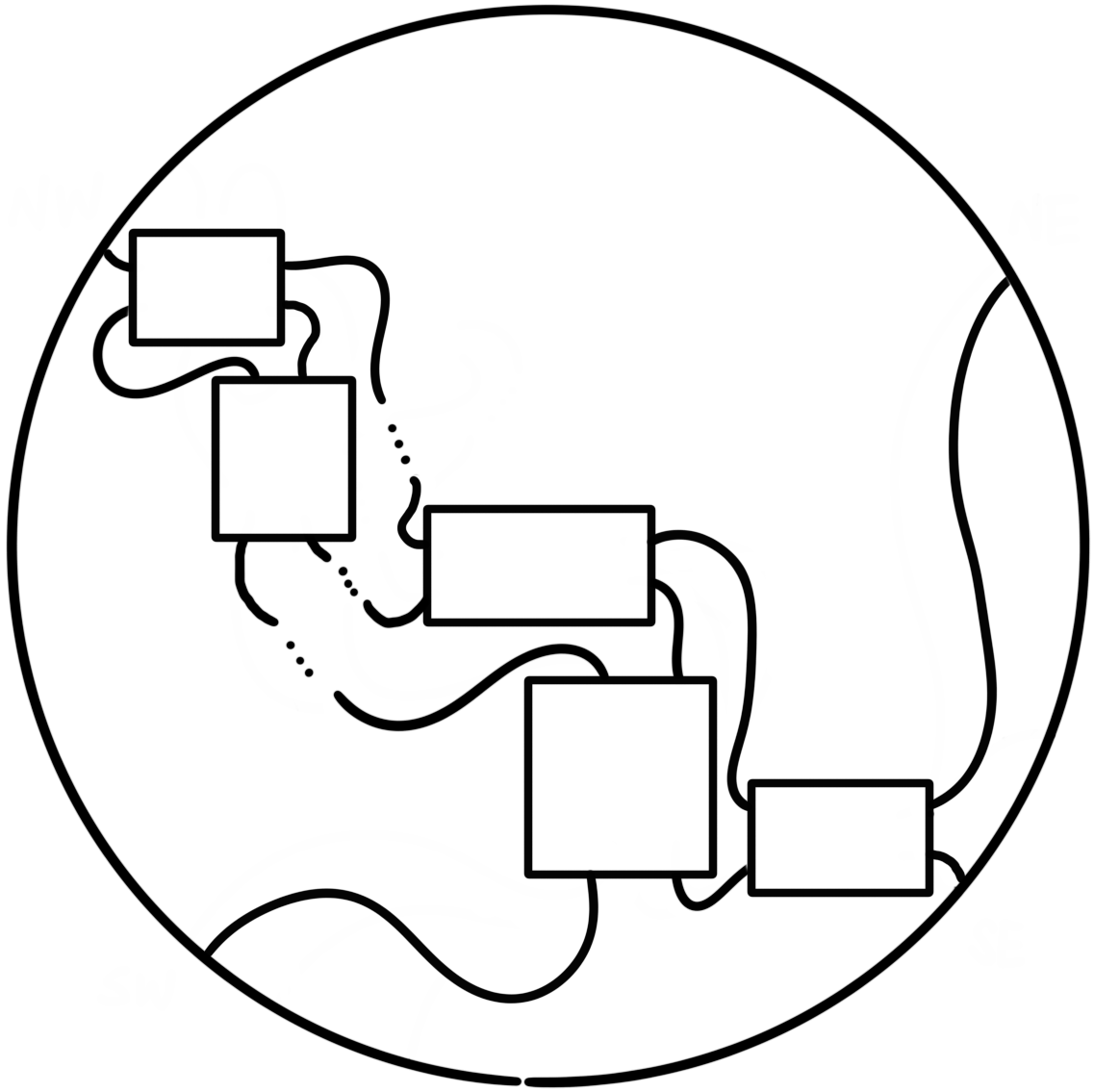}
\put(73,21.5){$a_n$}
\put(49.5,25){$a_{n-1}$}
\put(41.5,46){$a_{n-2}$}
\put(22,56){$a_2$}
\put(15,72){$a_1$}
\put(93,75){\footnotesize $\mathit{NE}$}
\put(87,13){\footnotesize $\mathit{SE}$}
\put(-0.8,78){\footnotesize $\mathit{NW}$}
\put(8.5,6.5){\footnotesize $\mathit{SW}$}
\end{overpic}
\caption{$n$ is odd.}
\label{fig:odd}
\end{subfigure}
\begin{subfigure}[t]{.38\linewidth}
\centering
\begin{overpic}[scale=.12,percent]{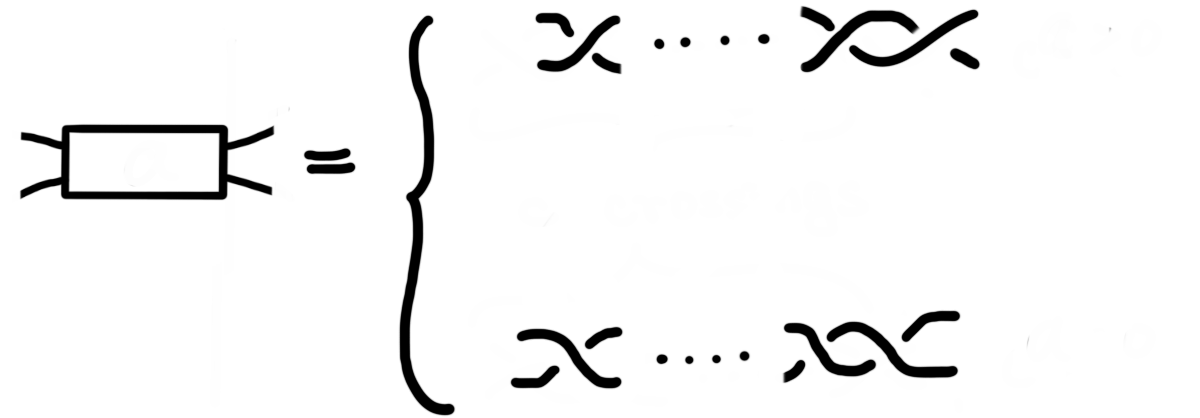}
\put(11,20){$a$}
\put(82,4){$a>0$}
\put(83,30){$a<0$}
\end{overpic}
\caption{Sign convention.}
\label{fig:horizontal}
\end{subfigure}
\hspace*{.04cm}
\begin{subfigure}[t]{.29\linewidth}
\centering
\begin{overpic}[scale=.12,percent]{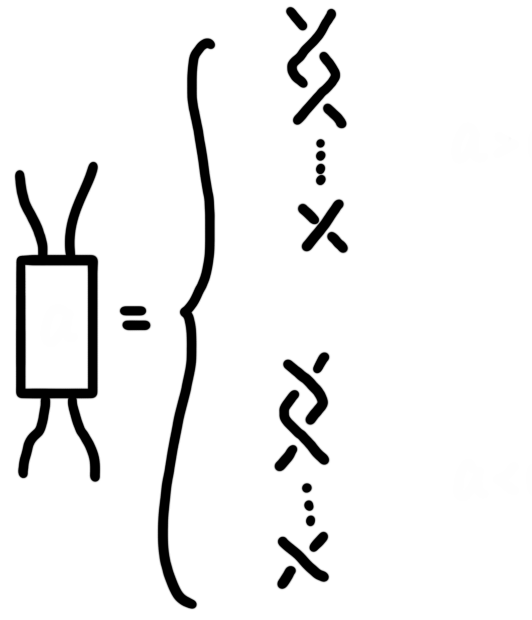}
\put(6,44){$a$}
\put(60,25){$a<0$}
\put(60,80){$a>0$}
\end{overpic}
\caption{Sign convention.}
\label{fig:vertical}
\end{subfigure}
\begin{subfigure}[t]{.29\linewidth}
\centering
\begin{overpic}[scale=.11,percent]{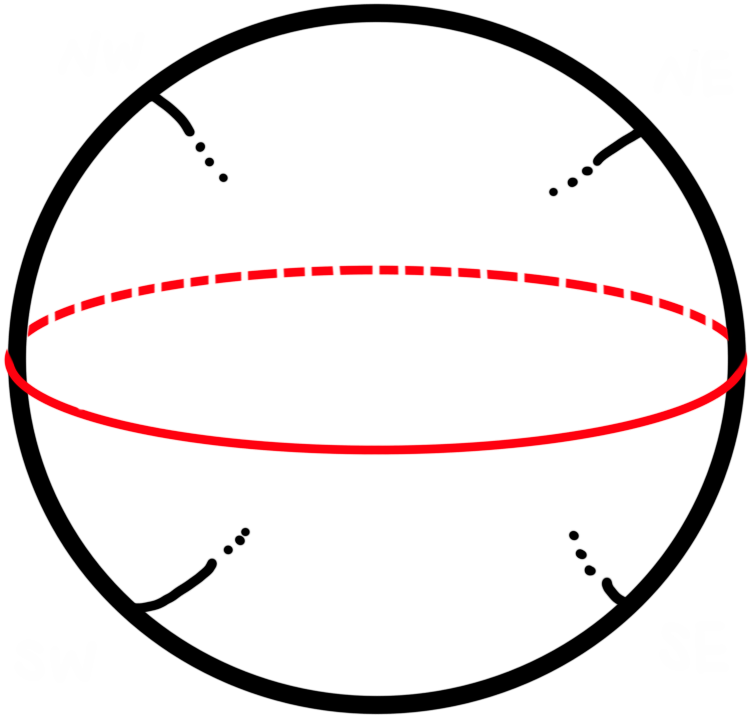}
\put(85,81){\footnotesize $\mathit{NE}$}
\put(85,7){\footnotesize $\mathit{SE}$}
\put(1.2,85){\footnotesize $\mathit{NW}$}
\put(1.2,2){\footnotesize $\mathit{SW}$}
\put(50,40){$e$}
\end{overpic}
\caption{Equator.}
\label{fig:equator}
\end{subfigure}
\begin{subfigure}[t]{.45\linewidth}
\centering
\begin{overpic}[scale=.11,percent]{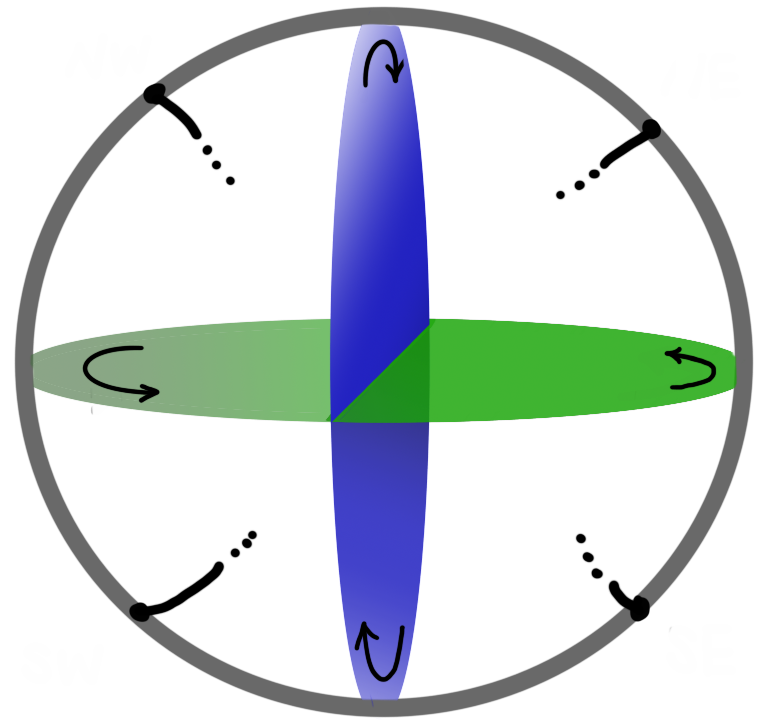}
\put(85,81){\footnotesize $\mathit{NE}$}
\put(85,7){\footnotesize $\mathit{SE}$}
\put(1.7,83){\footnotesize $\mathit{NW}$}
\put(2,5){\footnotesize $\mathit{SW}$}
\end{overpic}  
\caption{Twisting disks.}
\label{fig:twistingdisks}
\end{subfigure}
\begin{subfigure}[t]{.5\linewidth}
\centering
\begin{overpic}[scale=.13,percent]
{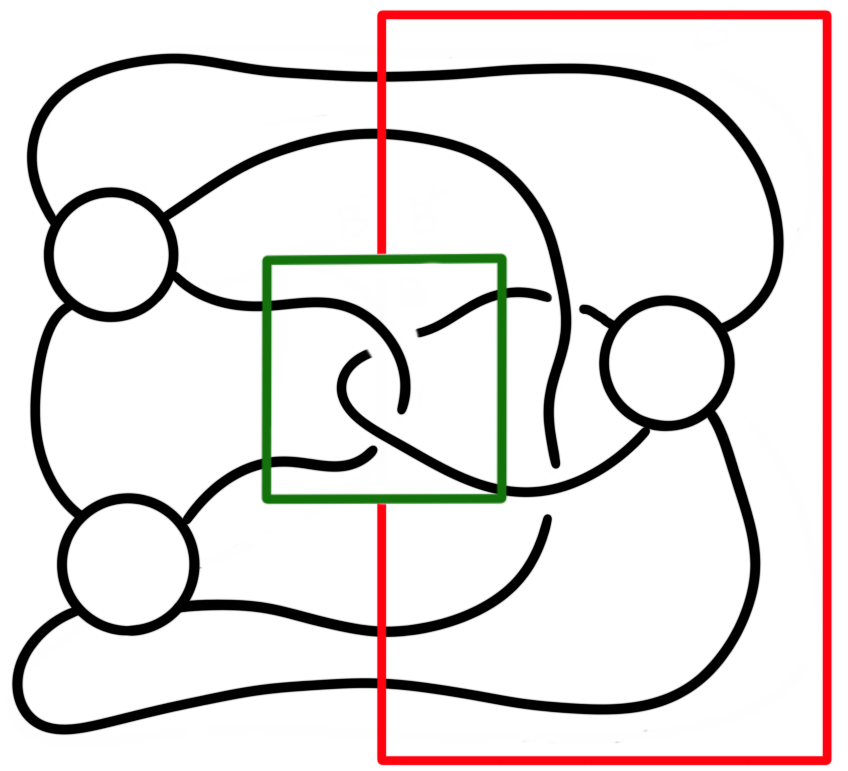}
\put(76,45){$\mathcal{C}$}
\put(9,58){$\mathcal{A}$}
\put(12,22){$\mathcal{B}$}
\put(48.5,40){$\ball'$}
\put(85.5,81){$\ball^\plus$}
\put(0.5,83){$\ball^\minus$}
\put(46,65){$\rmdisk$}
\put(70,10){$k_0$}
\end{overpic}
\caption{Eudave-Mu\~noz construction.}
\label{fig:emtangle}
\end{subfigure}
\caption{Rational and Eudave-Mu\~noz tangles.}
\end{figure}
Consider the unit $3$-ball $\ball$, and set 
$\mathit{NE}:=\frac{1}{\sqrt{2}}(0,1,1)$, $\mathit{SE}:=\frac{1}{\sqrt{2}}(0,1,-1)$, 
$\mathit{NW}:=\frac{1}{\sqrt{2}}(0,-1,1)$, 
$\mathit{SW}:=\frac{1}{\sqrt{2}}(0,-1,-1)$.
Then a $2$-string tangle is a proper embedding of 
two arcs with the four endpoints 
$\mathit{NE},\mathit{SE},\mathit{NW}$, and $\mathit{SW}$. 
By convention, a diagram of a $2$-string tangle is the projection of the tangle onto the plane $x=0$.
   
A rational tangle 
$\Ra(a_1,\dots,a_n)$ is a $2$-string tangle 
in a $3$-ball $\ball$ given in Figs.\ \ref{fig:even}, \ref{fig:odd} 
with the sign convention in Figs.\ \ref{fig:horizontal}, \ref{fig:vertical}.
A rational tangle $\Ra(a_1,\dots,a_n)$ 
is uniquely determined, up to isotopy fixing the endpoints, by the rational number 
\begin{equation}\label{eq:rational}
[a_1,\dots,a_n]:=a_n+\frac{1}{a_{n-1}+\frac{1}{\dots+\frac{1}{a_1}}}.
\end{equation}

Consider the trivial knot $(\asphere, k_0)$ in Fig.\ \ref{fig:emtangle},
where $\mathcal{A}=\Ra(l)$, $\mathcal{B}=\Ra(p,-2,m,-l)$, 
and $\mathcal{C}=\Ra(-n,2,m-1,2,0)$ with 
$l,m,n,p\in\mathbb{Z}$.
Let $\ball'$ be the $3$-ball in the center of Fig.\ \ref{fig:emtangle}, which meets $k_0$ at two subarcs, and 
denote by $\pi:\sphere\rightarrow \asphere$ 
the double branched cover over $k_0$.
Then the Eudave-Mu\~noz knot $\pairem$
is defined to be the pair  
$\bigl(\sphere, c(\pi^{-1}(\ball'))\bigr)$ with $l,m,n,p$ not those 
forbidden integers in \cite[p.132]{Eud:02}.

The exterior of an Eudave-Mu\~noz knot
admits a canonical incompressible twice-punctured torus
$T$ given by the preimage of the disk $\rmdisk$ in Fig.\ \ref{fig:emtangle} under $\pi$. 
The twice-punctured torus cuts $\Compl \emK$ 
into two genus $2$ handlebodies since 
$\rmdisk$ cuts $\overline{\asphere-\ball'}$ 
into two $3$-balls $\ball^\pm$ so that $(\ball^\pm,k_0\cap \ball^\pm)$ are isotopic, without fixing the boundary, 
to a trivial $3$-string tangle. Therefore
$T$ induces two handlebody-knots $\pairpnind$ with $\HK^\pm_{(l,m,n,p)}:=\pi^{-1}(\ball^\pm)$, called the \emph{induced handlebody-knots} of $\pairem$; we drop the subscript of $\pairpnind$ when there is no risk of confusion.
 
Since $T\subset\Compl\emK$ is incompressible with non-integral boundary slope, 
the annuli $\pi^{-1}(\partial \ball' \cap \ball^\mp)$ are essential and of type $4$-$1$ in $\Compl {\HK^\pm}$, respectively. 
The essentiality of $T$ implies 
$\pairpn$ are non-trivial, and 
the hyperbolicity of $\pairem$ 
implies its atoroidality. 
By \cite{KodOzaGor:15} and \cite{GorLue:04}, the converse is also true.  
\begin{lemma}\label{lm:fourone_emknots}
If $\pair$ admits an essential annulus 
$A$ of type $4$-$1$, then 
$\pair$ is an induced handlebody-knot of 
some Eudave-Mu\~noz knot $\pairem$. 
\end{lemma}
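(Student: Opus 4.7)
The plan is to extract, from the type $4$-$1$ annulus $A$, the data of a non-integral toroidal Dehn surgery on some hyperbolic knot $k\subset\sphere$, and then invoke the Gordon-Luecke classification \cite{GorLue:04} to conclude that $k$ is an Eudave-Mu\~noz knot and $\pair$ is an induced handlebody-knot.

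The first step is to produce the knot $k$ and a twice-punctured torus $S$ in $\Compl k$. The parallel curves $\partial A$ cobound an annulus $B\subset\partial\HK$, and $T:=A\cup B$ is a torus in $\sphere$. Using atoroidality of $\pair$, I would argue that the side of $T$ disjoint from $\mathring\HK$ is a solid torus $\rnbhd{k}$ for some knot $k\subset\sphere$: if instead it were the exterior of a nontrivial knot, a small push-off of $T$ into $\Compl\HK$ would be an essential torus there, contradicting atoroidality. Set $S:=\overline{\partial\HK\setminus B}$. Using the type $4$ hypothesis to rule out a core $c$ of $B$ being separating on $\partial\HK$, I expect $S$ to be a connected twice-punctured torus. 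Since $\HK\subset\Compl k$ and $\partial S=\partial B\subset\partial\rnbhd{k}$, the surface $S$ is properly embedded in $\Compl k$ with boundary slope equal to that of $c$ on $\partial\rnbhd{k}$.

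The heart of the argument is to verify: (i) $S$ is incompressible in $\Compl k$; (ii) its boundary slope on $\partial\rnbhd{k}$ is non-integral; (iii) $k$ is hyperbolic. Item (i) follows from the essentiality of $A$ together with the type $4$ condition: a compressing disk for $S$ in $\Compl k$, combined with the annular neighborhood of $A$, would yield either a compressing disk for $\partial\HK$ in $\sphere$ disjoint from $A$ or a compressing disk for $A$ itself. Item (ii) is the most delicate---an integral slope would produce a cabling annulus realizing a compressing disk of $\partial\HK$ disjoint from $A$, again contradicting type $4$. Item (iii) follows from atoroidality of $\pair$ combined with (i) and (ii): $\Compl k$ inherits atoroidality from $\Compl\HK$ (modulo $T$), and the existence of an incompressible $S$ with non-integral boundary slope excludes $k$ being a torus or satellite knot.

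With these in hand, Gordon-Luecke's theorem---that hyperbolic knots admitting non-integral toroidal Dehn surgery are exactly the Eudave-Mu\~noz knots---identifies $k$ with some $\emK$ and $S$ with the canonical twice-punctured torus $\pi^{-1}(\rmdisk)\subset\Compl\emK$. Since this surface splits $\Compl\emK$ into the induced handlebodies $\HK^{\pm}_{(l,m,n,p)}$ and $\HK$ lies on one side of $S$, the pair $\pair$ is identified with one of $\pairpnind$. The hardest step I anticipate is (ii): translating the topological no-disjoint-compression-disk condition of type $4$ into slope-theoretic non-integrality. I expect to adapt the tangle-based arguments of \cite[Section $3$]{KodOzaGor:15}, where type $4$ annuli are originally defined in direct correspondence with Eudave-Mu\~noz's tangles.
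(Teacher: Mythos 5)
Your construction is the paper's: the solid torus $V$ bounded by $A\cup B$ (with core $k=cV$), the twice-punctured torus $S=\overline{\partial\HK\setminus B}$, and the endgame via Gordon--Luecke. Steps (i) and (ii) are fine, but you have the difficulty inverted: (ii) is the easy step, not the delicate one, and it needs neither the type $4$ condition nor any tangle analysis. If the boundary slope of $S$ on $\partial V$ were integral, the two annuli $A$ and $B$ into which $\partial A$ divides $\partial V$ would be parallel through the solid torus $V$, making $A$ $\partial$-parallel in $\Compl\HK$; a meridional slope would make $A$ compressible. Either contradicts essentiality of $A$ alone, which is exactly the paper's one-clause argument.

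The genuine gap is step (iii), together with the passage from ``$S$ is incompressible with non-integral boundary slope'' to the actual hypothesis of Gordon--Luecke, namely that the surgery $k(r)$ is \emph{toroidal}. First, atoroidality of $\Compl k=\Compl V$ is not ``inherited modulo $T$'' from atoroidality of $\Compl\HK$: writing $\Compl V=\HK\cup_S U$ with $U:=\Compl{\HK\cup V}$, an essential torus in $\Compl V$ may run through $\HK$, and nothing in your sketch controls the piece $U$. Second, incompressibility of $S$ in $\Compl k$ does not by itself make the capped-off torus incompressible in $k(r)$. The paper closes both gaps with one move missing from your outline: it identifies $U$ as a genus two handlebody. (Since a closed genus two surface in $\sphere$ compresses on at least one side, either $U$ or $\HK\cup V$ is $\partial$-reducible; in the latter case \cite[Lemma $3.9$]{KodOzaGor:15} would produce an essential disk of $\HK$ disjoint from $A$, contradicting type $4$-$1$; so $U$ is $\partial$-reducible, and atoroidality of $\pair$ forces the frontier of its compression body to be empty.) With $\Compl k$ exhibited as two genus two handlebodies glued along $S$, \cite[Lemma $3.14$]{KodOzaGor:15} delivers hyperbolicity of $k$ and the non-integral toroidal surgery, and only then does Gordon--Luecke apply. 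Without identifying $U$, your step (iii) does not close.
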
  
\begin{proof}
Let $V$ be the solid torus cut off by $A$ from $\Compl\HK$. 
Since $\partial\HK$ has no compressing disk in $\sphere$ disjoint from $A$, 
the twice-punctured torus $T:=\overline{\partial\HK-V}$ 
is incompressible in $\Compl V$. 
Furthermore, $T$ has a non-integral boundary slope with respect to $(\sphere,cV)$ by the essentiality of $A$.
Set $U:=\Compl{\HK\cup V}$, and observe that 
$U$ is $\partial$-reducible---otherwise, $\HK\cup V$ would be $\partial$-reducible, and by \cite[Lemma $3.9$]{KodOzaGor:15}, 
there would exist an essential disk in $\HK$ disjoint from $A$, contradicting $A$ being of type $4$-$1$. 
In particular, the frontier of the compression body of 
$U$ is two tori, one torus or the empty set. The atoroidality of $\pair$ excludes
the first two cases, so $U$ is a genus two handlebody. Applying \cite[Lemma $3.14$]{KodOzaGor:15}, we obtain $(\sphere,cV)$ is hyperbolic.  
Since $(\sphere,cV)$ admits a non-integral toroidal Dehn surgery, by Gordon-Luecke \cite{GorLue:04}, 
$(\sphere,cV)$ is an Eudave-Mu\~noz knot $\pairem$ with 
$T$ the canonical twice-punctured torus. 
\end{proof}

\subsection{Classification}\label{subsec:fourone_classification}
Consider the rational tangle $(\ball,t)=\Ra(a_1,\dots,a_n)$ with 
$\frac{p}{q}=[a_1,\dots,a_n]$, 
and $\pi_t:V\rightarrow \ball$ be the double-cover of $\ball$ branched along $t$. Denote by $e$ the equator $\partial\ball\cap \{z=0\}$, and 
let $l\subset \partial V$ be a component of $\pi^{-1}(e)$ (Fig.\ \ref{fig:equator}). 
Then we have the following
\begin{lemma}\label{lm:slope}
Let $g$ be a generator of $H_1(V)$. Then 
$[l]=\pm pg\in H_1(V)$. 
\end{lemma}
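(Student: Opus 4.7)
The plan is to realize $V$ as one half of a genus-one Heegaard splitting of a lens space, and then to read $[l]$ off the resulting decomposition.

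First, I would form the \emph{numerator closure} $(\sphere, K)$ of the tangle $(\ball, t)$: embed $\ball$ in $\sphere$ and close $t$ by adding, in the outer $3$-ball $\ball^\ast := \overline{\sphere - \ball}$, two simple unknotted arcs $\alpha, \beta$ connecting $\mathit{NE}$--$\mathit{NW}$ (through the upper hemisphere of $\ball^\ast$) and $\mathit{SE}$--$\mathit{SW}$ (through the lower hemisphere). By the classical Schubert--Conway theory, $(\sphere, K)$ is the $2$-bridge link of type $p/q$, so the double branched cover $\widetilde{\pi}\colon \widetilde{\sphere} \to \sphere$ extending $\pi_t$ realizes $\widetilde{\sphere}$ as the lens space $L(p,q)$, with $H_1\bigl(L(p,q)\bigr) \cong \mathbb{Z}/p$.

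Next, decompose $L(p,q) = V \cup W$, where $W := \widetilde{\pi}^{-1}(\ball^\ast)$ is the double cover of $\ball^\ast$ branched along $\alpha \cup \beta$; since $(\ball^\ast, \alpha \cup \beta)$ is a trivial tangle, $W$ is a solid torus. The key observation is that the equatorial disk $\ball^\ast \cap \{z=0\}$ is disjoint from $\alpha \cup \beta$---$\alpha$ sits above and $\beta$ below it---so its $\widetilde{\pi}$-preimage consists of two disjoint disks in $W$ whose boundary components are precisely the two components of $\pi_t^{-1}(e)$. In particular $l$ bounds a disk in $W$, i.e., $l$ is a meridian of $W$. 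Mayer--Vietoris (or, equivalently, van Kampen after abelianisation) then gives
\[
H_1\bigl(L(p,q)\bigr) \;=\; H_1(V \cup W) \;=\; H_1(V) / \langle [l] \rangle,
\]
and comparing $H_1(V) \cong \mathbb{Z}$ (generated by $g$) with $H_1(L(p,q)) \cong \mathbb{Z}/p$ forces $[l] = \pm p\, g$, as claimed.

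The main subtlety to nail down is that the numerator closure above, together with the paper's continued-fraction convention $p/q = [a_1,\dots,a_n]$, really does produce the $2$-bridge link $L(p,q)$ (rather than, say, $L(q,p)$ or its mirror). I would settle this by induction on $n$: the base case $n = 1$ (the integer tangle $\Ra(a_1)$, with fraction $a_1/1$) is checked directly, and the inductive step treats each additional twist $a_{i+1}$ as a Dehn twist along $\partial \ball$ acting on the pair (meridian of $W$, core of $V$) by the standard $\mathrm{SL}_2(\mathbb{Z})$ generator corresponding to the slope transformation $x \mapsto a_{i+1} + 1/x$.
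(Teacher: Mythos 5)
Your argument is correct, but it takes a genuinely different route from the paper's. The paper works entirely inside $V$: it builds the disk $\rmdisk$ separating the two strings of $\Ra(a_1,\dots,a_n)$ by the same sequence of twists that builds the tangle, notes that the lifts of $\rmdisk$ are meridian disks of $V$, and reads off $[l]$ from the count $\vert e\cap\partial\rmdisk\vert$, which the continued-fraction recursion identifies with $\vert p\vert$. You instead close up: capping with the trivial outer tangle exhibits the double branched cover of $\sphere$ over the numerator closure as $V\cup W$ with $W$ a solid torus, your equatorial-disk observation correctly shows $l$ is a meridian of $W$, and $[l]=\pm pg$ then falls out of $H_1(V)/\langle[l]\rangle\cong H_1$ of that branched cover. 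The two proofs are dual in spirit---the paper pairs $l$ against the meridian of $V$, you identify $l$ as the meridian of $W$---and each must at some point tie the paper's continued-fraction convention to geometry: the paper does so via the explicit twisting construction of $\rmdisk$, while you defer it to the induction showing the closure of the $[a_1,\dots,a_n]$-tangle is the $2$-bridge link of type $p/q$. That deferred step is genuinely needed but unproblematic: since only the order of $H_1$ of the branched cover enters, the $L(p,q)$-versus-$L(p,q')$ ambiguity is harmless, and the only real danger is confusing numerator with denominator closure (which would yield $\pm qg$); your choice of closing arcs NE--NW and SE--SW is the standard numerator closure, and the paper's convention (final twists $a_n$ horizontal, contributing the integer part of \eqref{eq:rational}) matches the Conway convention under which that closure has determinant $\vert p\vert$. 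What your approach buys is independence from the explicit disk construction, at the cost of invoking the Schubert--Montesinos correspondence; the paper's count is more elementary and self-contained.
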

\begin{proof}
A disk $\rmdisk$ separating the two strings in $\Ra(a_1,\dots,a_n)$ 
can be constructed as follows: start with a disk separating
the two strings in $\Ra(0)$ (resp.\ $\Ra(0,0)$) if $n$ is odd (resp.\ even);
next, twist 
along $\ball\cap \{y=0\}$ (resp.\ $\ball\cap \{z=0\}$) $a_1$ times (Fig.\ \ref{fig:twistingdisks}); then inductively, for every $2\leq i\leq n$, 
twist along $\ball\cap \{z=0\}$ or $\ball\cap \{y=0\}$ $a_i$ times after twisting $a_{i-1}$ times along $\ball\cap \{y=0\}$ or $\ball\cap \{z=0\}$, respectively.

Note that the final twist twists the strings $a_n$ times along $\ball\cap \{y=0\}$, so the number of intersection between the separating disk $\rmdisk$ and the equator $e=\partial \ball\cap\{z=0\}$
is calculated by the numerator of \eqref{eq:rational}. Therefore $l$ meets a meridian disk of $V$ $p$ times, and thus the claim. 
\end{proof}

\begin{figure}[b]
\begin{subfigure}{.48\linewidth}
\centering
\begin{overpic}[scale=.17,percent]{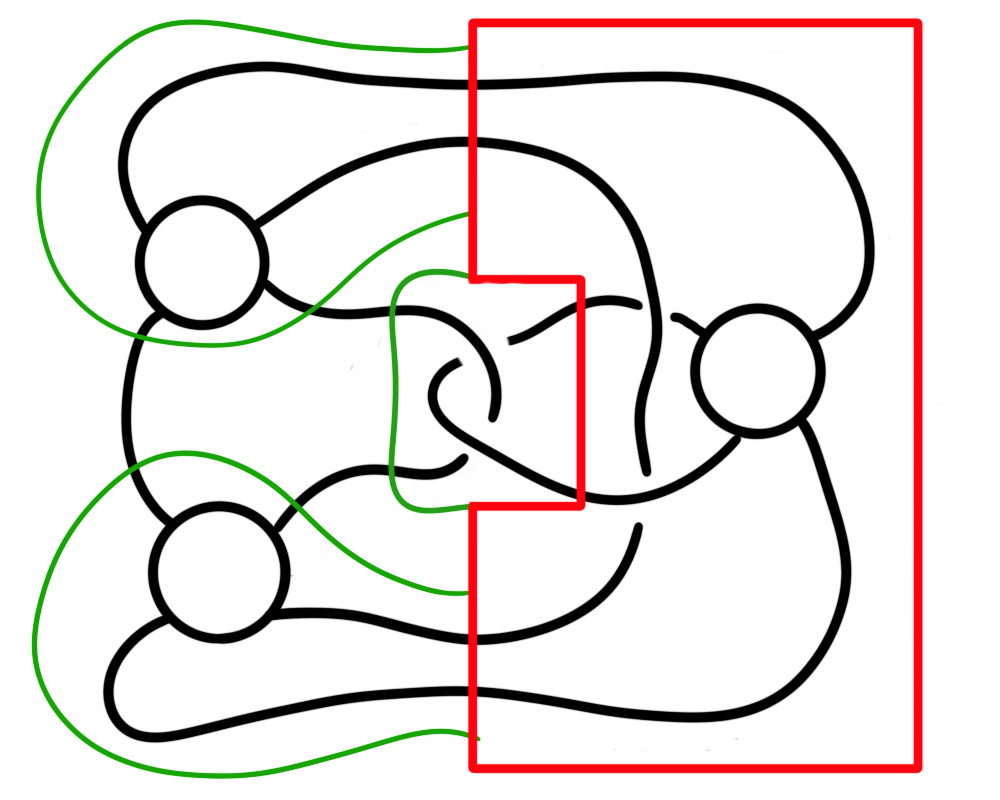}
\put(4,1.9){$\rmdisk_\beta$}
\put(2.6,75){$\rmdisk_\alpha$}
\put(0,40){\Large $\ball_y$}
\put(33,39){$\rmdisk'$}
\put(17.5,53){$\mathcal A$}
\put(20,21){$\mathcal B$}
\put(74,41){$\mathcal C$}
\put(70,11){$k_0$}
\put(70,60){\Large $\ball^\plus$}
\put(50.5,39){\large $\ball'$}
\end{overpic}
\caption{Essential annuli in $\Compl{\HK^+}$.}
\label{fig:hkplus}
\end{subfigure}
\begin{subfigure}{.48\linewidth}
\centering
\begin{overpic}[scale=.17,percent]{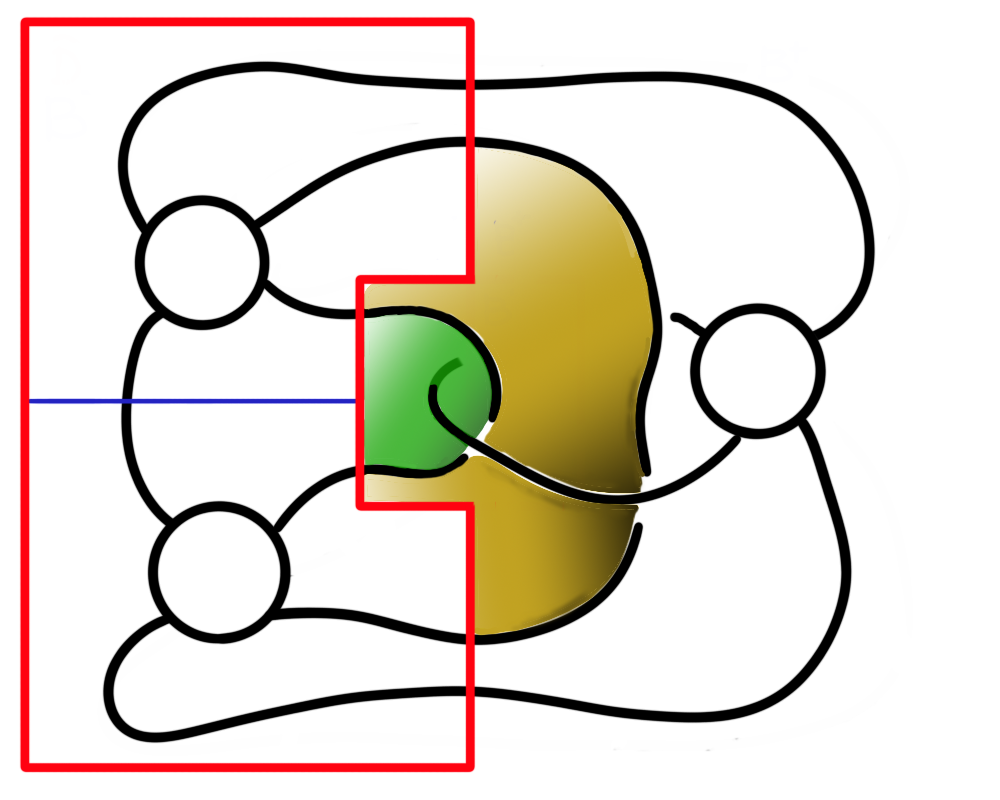} 
\put(17.5,53){$\mathcal A$}
\put(20,21){$\mathcal B$}
\put(74,41){$\mathcal C$}
\put(20,41.5){$\rmdisk_s$}
\put(50,57){$\rmdisk_a$}
\put(36.5,35){\footnotesize $\rmdisk_{em}$}
\put(4,70){\Large $\ball^\minus$}
\end{overpic}
\caption{Essential annuli in $\Compl{\HK^-}$.}
\label{fig:hkminus}
\end{subfigure}
\caption{Essential annuli in $\Compl{\HK^\plusminus}$.}
\end{figure}

\begin{theorem}\label{teo:fourone_classification}
Type $4$-$1$ annuli are always non-characteristic, and if $\Compl\HK$ admits a type $4$-$1$ annulus, then
$\charE$ is either \includegraphics[scale=.17]{100i}
or \includegraphics[scale=.17]{200i}.
Furthermore, it is the latter if and only if 
\[\pair\simeq \pairpind{l,m,n,p}\]
for some $l,m,n,p$ with $l\neq \pm 2$ 
and $2mpl-2p-pl-ml+1\neq \pm 2$.
\end{theorem}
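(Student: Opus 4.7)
The plan is to invoke Lemma \ref{lm:fourone_emknots} to identify $\pair$ with one of the induced handlebody-knots $\pairpnind{l,m,n,p}$, and then analyze the JSJ-decomposition of $\Compl{\HK^\pm}$ directly from Figures \ref{fig:hkplus} and \ref{fig:hkminus}, using Lemma \ref{lm:slope} for the slope arithmetic and Lemma \ref{lm:typeKMS} to translate fibered-base data into JSJ-graphs. Non-characteristicity of the type-$4$-$1$ annulus $A$ will emerge as a byproduct: once the characteristic surface of $\Compl{\HK^\pm}$ is explicitly produced below, $A$ will be seen to lie in the interior of the I-fibered piece as a non-$\front$-parallel admissible annulus, not as one of its frontier components.

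For the JSJ-graph dichotomy, I concentrate on $\pairpind{l,m,n,p}$. Consider the preimages under $\pi$ of the two tangle-separating disks $\rmdisk_\alpha,\rmdisk_\beta$ in $\ball^+$ shown in Figure \ref{fig:hkplus}: they separate tangles $\mathcal{A} = \Ra(l)$ and $\mathcal{B} = \Ra(p,-2,m,-l)$ from the central region containing tangle $\mathcal{C}$ and the ball $\ball'$. By Lemma \ref{lm:slope}, the lifted boundary curves on the solid tori $\pi^{-1}(\ball_{\mathcal A}), \pi^{-1}(\ball_{\mathcal B})$ represent homology classes of absolute values $\vert l\vert$ and $\vert 2mpl-2p-pl-ml+1\vert$, the latter obtained by expanding $[p,-2,m,-l] = (2mpl-2p-pl-ml+1)/(-2mp+m+p)$ via \eqref{eq:rational}. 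When both numerators differ from $\pm 2$, the two lifts are essential annuli; combined with the type-$4$-$1$ annulus $A$, they present $\Compl{\HK^+}$ as an I-bundle over a once-punctured M\"obius band glued along annuli to Seifert-fibered solid tori, giving JSJ-graph $\includegraphics[scale=.17]{200i}$ via Lemma \ref{lm:typeKMS}\ref{itm:typeM}. When one numerator equals $\pm 2$, the corresponding lift is a M\"obius band whose regular neighborhood can be absorbed into the I-fibration, promoting the base to a once-punctured Klein bottle and giving JSJ-graph $\includegraphics[scale=.17]{100i}$ via Lemma \ref{lm:typeKMS}\ref{itm:typeK}. A parallel analysis of Figure \ref{fig:hkminus}---where the additional tangle-separating disks $\rmdisk_s, \rmdisk_a, \rmdisk_{em}$ force M\"obius-band structure independent of the parameters---shows $\Compl{\HK^-}$ is always of type $\Kl$.

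The main obstacle is assembling the local tangle-covering pieces into a genuine global I-bundle structure over the predicted non-orientable base surface, and then excluding every other JSJ-graph in Figure \ref{tab:char_daigram}. Concretely, this requires careful bookkeeping of I-fiber orientations near the lifted tangle-separating surfaces (to distinguish the M\"obius from Klein bottle cases), correct identification of the Seifert-fibered solid tori arising from tangle $\mathcal{C}$ and the central ball, and systematic use of the atoroidality hypothesis to rule out essential-torus configurations that would correspond to JSJ-graphs with more nodes. A secondary care point is that the continued-fraction expansion of $[p,-2,m,-l]$ must be executed precisely, so that the arithmetic condition $2mpl-2p-pl-ml+1 \neq \pm 2$ is recognized as exactly the obstruction to the $\mathcal{B}$-lift being a genuine annulus rather than a M\"obius band, paralleling the more transparent condition $l\neq \pm 2$ coming from $\mathcal A=\Ra(l)$.
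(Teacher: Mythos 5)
Your proposal follows essentially the same route as the paper: reduce to an induced handlebody-knot of an Eudave-Mu\~noz knot via Lemma \ref{lm:fourone_emknots}, lift the tangle-separating disks $\rmdisk_\alpha,\rmdisk_\beta,\rmdisk'$ (resp.\ $\rmdisk_s,\rmdisk_a,\rmdisk_{em}$) to build the I-bundle over a once-punctured M\"obius band that extends to a Klein-bottle base exactly when one of the two numerators $|l|$, $|2mpl-2p-pl-ml+1|$ equals $2$, with the $\HK^-$ case always of type $\Kl$ and non-characteristicity read off from $A$ sitting inside the fibered piece. One small correction: the lift of $\rmdisk_\alpha$ is always an annulus (the double branched cover of a disk over two points), not a M\"obius band; what happens when the numerator is $\pm 2$ is that the solid torus it bounds becomes a twisted I-bundle over a M\"obius band and is absorbed, which is how the paper phrases it via the cokernel of $H_1(A_\ast)\to H_1(X_\ast)$.
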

\begin{proof}
By Lemma \ref{lm:fourone_emknots}, 
$\pair$ is equivalent to an induced 
handlebody-knot 
of an Eudave-Mu\~noz knot $\pairem$,
for some $(l,m,n,p)$.

Suppose it is equivalent to 
$\pairp$. Then observe that the disks
$\rmdisk_\alpha, \rmdisk_\beta,\rmdisk'$ in Fig.\ \ref{fig:hkplus} cut 
off the rational tangles 
$\mathcal{A},\mathcal{B},(\ball',\ball'\cap k_0)=\Ra(-2,0)$ from the exterior of $\ball^+$, respectively. 
Denote by $X_\alpha, X_\beta, X'$ the solid tori cut off by the annuli
$A_\alpha:=\pi^{-1}(\rmdisk_\alpha),A_\beta:=\pi^{-1}(\rmdisk_\beta),
A':=\pi^{-1}(\rmdisk')$ from $\Compl{\HK^+}$. 
Consider now the preimage $Y$ of 
the ball $\ball_y$ bounded by $\rmdisk_\alpha, \rmdisk_\beta,\rmdisk'$, 
and observe that there exists an I-bundle structure $\pi:Y\rightarrow P$ over a pair of pants $P$ with components of $\pi^{-1}(\partial P)$ 
corresponding to $A_\alpha,A_\beta$ and $A'$.
Since the cokernel of the induced homomorphism $H_1(A')\rightarrow H_1(X')$ by the inclusion 
is $\mathbb{Z}_2$, the I-bundle structure can be extended to the union $Y\cup X'$ so that $Y\cup X'$ is I-fibered over a once-punctured M\"obius band; $A$ 
is hence non-characteristic.

On the other hand, by Lemma \ref{lm:slope}, 
the cokernel of the induced homomorphism $H_1(A_\ast)\rightarrow H_1(X_\ast)$ by inclusion 
is a cyclic group of order $\mathfrak{o}_\alpha:=\vert l\vert$ 
(resp.\ order $\mathfrak{o}_\beta:=\vert 2lmp-lp-lm-2p+1\vert$)
if $\ast=\alpha$ (resp.\ $\ast=\beta$), so at most one of $\mathfrak{o}_\alpha,\mathfrak{o}_\beta$ 
is $\pm 2$. If $\mathfrak{o}_\alpha$ (resp.\ $\mathfrak{o}_\beta$) is $\pm 2$, 
then the I-bundle structure of $Y\cup X'$ 
can be further extended so that 
$Y\cup X'\cup X_\alpha$ (resp.\ $Y\cup X'\cup X_\beta$) 
is I-fibered 
over a once-punctured Klein bottle,
and $\charE$ 
is hence \includegraphics[scale=.15]{100i}.  
If none of $\mathfrak{o}_\alpha,\mathfrak{o}_\beta$ is $\pm 2$,
then the I-bundle cannot be extended, so $\charE$ is \includegraphics[scale=.15]{200i}; this gives us the constraints on $l,m,n,p$ in the second assertion.

Suppose $\pair$ is equivalent to
$\pairn$. Then we observe that the preimage $M$ of the 
disk $\rmdisk_{em}$ in Fig.\ \ref{fig:hkminus} is an 
essential M\"obius band, 
whose core is $K_{(l,m,n,p)}$. 
Observe also that the preimage $A$ of the 
disk $\rmdisk_a$ in Fig.\ \ref{fig:hkminus} 
is a type $3$-$3$ essential annulus since 
the preimage of the disk $\rmdisk_s$ is an essential disk in $\HK^-$ separating components of $\partial A$.  
In addition, $A, M$ meet at the arc $\pi^{-1}(\rmdisk_{em}\cap \rmdisk_a)$, and thus a regular neighborhood of $A\cap M$ can be I-fibered over a once-punctured Klein bottle. This implies $\charE$
is \includegraphics[scale=.15]{100i}, and $A$ is non-characteristic. 
\end{proof}

\subsection{On Theorem \ref{intro:teo:ann_diag}}\label{subsec:classification}
We explain here how the table in Fig.\ \ref{tab:ann_daigram} is derived. 
%
%
Recall first that, given a type $3$-$3$
annulus $A\subset\Compl\HK$, there exists a unique separating disk $\Disk A\subset \HK$ disjoint from $\partial A$ \cite[Lemma $2.3$]{FunKod:20}. The disk 
$\Disk A$ cuts 
$\HK$ into two solid tori $V_1,V_2$, each of which contains a component of $\partial A$. 
The \emph{slope pair} of $A$ is then defined to be 
the unordered pair $(r_1,r_2)$ with $r_1,r_2$ the slopes 
of components
of $\partial A$ with respect to $V_1,V_2$ in $\sphere$. 
It is known that the slope pair has either of the form
$(\frac{p}{q},\frac{q}{p})$, $pq\neq 0$, or of the form $(\frac{p}{q},pq)$, $q>0, p\neq \pm 1$ \cite[Lemma $2.12$]{Wan:23}. In the case $(r_1,r_2)=(0,0)$, 
we say $A$ has a trivial 
slope. 

\begin{lemma}\label{lm:ltwo_to_trivial_slope}
If $A\subset\Compl\HK$ is of type $3$-$3$ii, then $A$ has a trivial slope. 
\end{lemma}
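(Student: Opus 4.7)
The plan is to show that, for $A$ of type $3$-$3$ii, the failure of $A$ to be essential in $\Compl{V_1 \cup V_2}$---where $V_1, V_2$ are the two solid tori cut off from $\HK$ by the separating disk $\Disk{A}\subset\HK$ disjoint from $\partial A$---must manifest as compressibility of $A$, and then the resulting compression disks force each boundary slope to be $0$ by a homology argument in $\sphere - V_i$.

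First I would eliminate the two other possible forms of inessentiality. The two components of $\partial A$ lie on the disjoint tori $\partial V_1$ and $\partial V_2$, so $A$ cannot be $\partial$-parallel: a $\partial$-parallelism would produce an annulus on $\partial V_1\sqcup\partial V_2$ joining the two components of $\partial A$, which is impossible as the tori are disjoint. Likewise, any essential arc in the annulus $A$ runs between its two boundary components, so a $\partial$-compression disk for $A$ would yield an arc on $\partial V_1\sqcup\partial V_2$ joining $\partial V_1$ to $\partial V_2$, again impossible. Hence $A$ must admit an honest compression disk $E\subset \Compl{V_1\cup V_2}$.

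Next, since $\partial E$ is an essential simple closed curve in the annulus $A$, it is isotopic to the core of $A$; compressing $A$ along $E$ then yields two disjoint, properly embedded disks $D_1, D_2\subset \Compl{V_1\cup V_2}$ with $\partial D_i = \gamma_i$, the boundary component of $A$ on $\partial V_i$. Since $D_i\subset \sphere - V_i$, the homology class $[\gamma_i]$ vanishes in $H_1(\sphere - V_i)\cong\mathbb{Z}$. Writing $\gamma_i = p_i\mu_i + q_i\lambda_i$ on $\partial V_i$, with $\mu_i$ a meridian of $V_i$ and $\lambda_i$ the Seifert longitude, the vanishing of $[\gamma_i]$ forces $p_i = 0$, and simplicity of $\gamma_i$ then gives $q_i = \pm 1$. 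Thus $\gamma_i$ has slope $0$ with respect to $V_i$, and the slope pair is $(0,0)$; that is, $A$ has trivial slope.

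The main obstacle is pinning down that inessentiality in $\Compl{V_1\cup V_2}$ must be compressibility rather than one of the other two failures. Once this is resolved using the disjointness of $\partial V_1$ and $\partial V_2$, the homological computation closing the argument is routine.
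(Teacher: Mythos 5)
Your proposal is correct and follows essentially the same route as the paper's proof: both observe that, since the two components of $\partial A$ lie on the disjoint tori $\partial V_1$ and $\partial V_2$, the inessentiality of $A$ in $\Compl{V_1\cup V_2}$ can only be compressibility, and then compress $A$ to obtain disks bounded by each component of $\partial A$, forcing the slope pair $(0,0)$. Your write-up merely makes explicit the elimination of $\partial$-parallelism and $\partial$-compressibility and the concluding homology computation, which the paper leaves implicit.
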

\begin{proof}
Let $V_1,V_2\subset\HK$ be the solid tori cut off by the disk disjoint from $A$, and $l_1,l_2$ be the components of $\partial A$ in $V_1,V_2$, respectively. 
The annulus $A$ is compressible
since $A\subset \Compl{V_1\cup V_2}$ is inessential and $A$ meets both $V_1,V_2$. 
Any compressing disk of $A$ induces two disks $D_1,D_2\subset\Compl{V_1\cup V_2}$ with 
$\partial D_i=l_i$, and therefore the claim.
\end{proof}

Conversely, a type $3$-$3$ annulus with trivial slope 
may not be of type $3$-$3$ii, but we have the following.
\begin{lemma}\label{lm:trivial_slope_to_ltwo}
Suppose $A\subset\Compl\HK$ is of type $3$-$3$ with trivial slope, 
and there exists a type 
$2$-$2$ annulus $A'\subset\Compl\HK$ disjoint from $A$. 
Then $A$ is of type $3$-$3$ii. 
\end{lemma}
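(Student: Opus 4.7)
The plan is to produce a compressing disk for $A$ in the exterior $N:=\Compl{V_1\cup V_2}$, thereby showing that $A$ is inessential in $N$ and hence of type $3$-$3$ii. Here $V_1,V_2$ are the two solid tori obtained by cutting $\HK$ along the unique essential separating disk $\Disk A\subset\HK$ disjoint from $\partial A$ (given by \cite[Lemma 2.3]{FunKod:20}). Write $T_i:=\partial V_i\cap\partial\HK$, $l_i:=\partial A\cap T_i$, and let $l'_1,l'_2$ denote the components of $\partial A'$, with $l'_1$ bounding a separating disk $D'\subset\HK$ by the type-$2$-$2$ hypothesis.

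First, I would verify that $l'_1$ is isotopic on $\partial\HK-\partial A$ to $\partial\Disk A$. Since $A\cap A'=\emptyset$ forces $\partial D'\cap\partial A=\emptyset$, and $\mathring D'\subset\mathring\HK$ is automatically disjoint from $\partial A\subset\partial\HK$, the disk $D'$ is a separating essential disk in $\HK$ disjoint from $\partial A$. The uniqueness part of \cite[Lemma 2.3]{FunKod:20} then gives $D'\simeq\Disk A$, so $l'_1$ and $\partial\Disk A$ cobound an annulus $C\subset\partial\HK-\partial A$.

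Next, I would exploit the decomposition $N=\Compl\HK\cup_{A_0}B$, where $B:=\HK-\openrnbhd{V_1\cup V_2}$ is a $3$-ball and $A_0\subset\partial\HK$ is an annular collar of $\partial\Disk A$. Inside $B$ sits a properly embedded disk $D^*$ parallel to $\Disk A$, with $\partial D^*$ the core circle of $A_0$. Choosing $A_0$ narrow enough that $l'_1,l'_2,l_1,l_2\not\subset A_0$, the union $D^*\cup C$, with $\mathring C$ pushed slightly off $\partial\HK$ into $\mathring N$, becomes a properly embedded disk $D^*_1\subset N$ with $\partial D^*_1=l'_1$. Since $D^*\subset B$ is disjoint from $A'\subset\Compl\HK$ and, by appropriate push direction, $C$ can be made disjoint from $A'$ in the interior, general position gives $A'\cap D^*_1=l'_1$, so $E_2:=A'\cup D^*_1$ is an embedded disk in $N$ with $\partial E_2=l'_2$. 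Assume without loss of generality $l'_2\subset T_1$ (the opposite case is symmetric via $V_2$). The curves $l_1,l'_2$ are both essential on the torus $\partial V_1$---$l_1$ by the type-$3$-$3$ condition (it does not bound in $\HK\supset V_1$), and $l'_2$ by the type-$2$-$2$ condition---and being disjoint essential simple closed curves on a torus they are parallel, cobounding an annulus $B_{12}\subset\partial V_1$. Pushing $B_{12}$ slightly off $\partial V_1$ into $\mathring N$ and concatenating with $E_2$ along $l'_2$ then yields (after general-position adjustments) an embedded disk $\tilde E\subset N$ with $\partial\tilde E=l_1$; pushing $\partial\tilde E$ slightly into the interior of $A$ supplies the sought-after compressing disk for $A$.

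The principal technical difficulty is the careful positioning of $l_1,l_2,l'_1,l'_2$ relative to $A_0$ and the verification that each intermediate surface ($D^*_1$, $E_2$, $\tilde E$) is embedded after the successive push-offs and concatenations; these points should yield to standard innermost-disk and general-position arguments. The trivial-slope hypothesis is naturally consistent with the derived parallelism between $l_1$ and $l'_2$---both curves must in fact be preferred longitudes of $V_1$---and secures the symmetric conclusion on the $V_2$ side should $l'_2\subset T_2$ instead.
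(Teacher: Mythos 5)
Your proposal is correct and follows essentially the same route as the paper's proof: identify the separating disk $D'$ bounded by one component of $\partial A'$ with the unique disk $\Disk A$ disjoint from $\partial A$, cap off $A'$ with (a push-in of) that disk to get a disk in $\Compl{V_1\cup V_2}$ bounded by the other component of $\partial A'$, and transfer it to $l_1$ via the parallelism of the two disjoint essential curves on $\partial V_1$. The paper simply compresses the push-off and general-position bookkeeping into the single sentence ``the union $A'\cup D_s$ induces a disk in $\Compl{V_1\cup V_2}$ bounded by $l_1$.''
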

\begin{proof}
Let $l_1,l_2$ be the components of $\partial A$, and $l,l_s$ the components of $\partial A'$ 
with $l_s$ bounding an essential separating disk $D_s$. Since $\partial D_s\cap \partial A=\emptyset$, the disk $D_s$ cuts $\HK$ into 
two solid tori $V_1,V_2$ with $l_i\subset V_i$, $i=1,2$.   
This implies $l$ is parallel to either $l_1$ or $l_2$, say $l_1$; the union $A'\cup D_s$ then induces a disk in 
$\Compl{V_1\cup V_2}$ bounded by $l_1$, 
and hence $A\subset \Compl{V_1\cup V_2}$ is inessential. 
\end{proof}

Recall the classification of relative JSJ-graphs of handlebody-knots whose exteriors contain 
a type $2$ annulus.  
\begin{lemma}[{\cite[Theorems $1.4$ and $1.6$]{Wan:22p}}]\label{lm:typetwo}\hfill
\begin{enumerate}[label=\textnormal{(\roman*)}]
\item\label{itm:twoone} If $\Compl\HK$ contains a type $2$-$1$ annulus, 
then $\annhk$ is 
\raisebox{-.3\height}{\includegraphics[scale=.25]{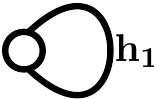}}  
\item\label{itm:twotwo}  
If $\Compl\HK$ contains a type $2$-$2$ annulus, 
then $\annhk$ is one of the following:
\[
\includegraphics[scale=.23]{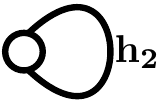},\quad
\includegraphics[scale=.23]{r210h.hopf},\quad
\includegraphics[scale=.28]{r303hi}
.\]
\end{enumerate}  
\end{lemma}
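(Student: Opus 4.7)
The plan is to read off the relative JSJ-graph of $\Compl\HK$ directly from the pieces that a type $2$ annulus $A$ and its witnessing disk $D\subset\HK$ cut out of $\sphere$, then match the result against the JSJ-graph table in Fig.\ \ref{tab:char_daigram}. Throughout, I rely on the atoroidality and non-triviality of $\pair$, which force the ambient JSJ-graph $\charE$ to be one of the fourteen in Fig.\ \ref{tab:char_daigram}.

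For part \ref{itm:twoone}, I would start with a type $2$-$1$ annulus $A$ together with a non-separating disk $D\subset\HK$ such that $\partial D$ is a component of $\partial A$. The key observation is that a regular neighborhood of $D\cup A$ intersected with $\Compl\HK$ carries a natural $S^1$-bundle structure in which $A$ appears as a vertical annulus; this produces a Seifert fibered solid torus incident to the edge of the relative JSJ-graph represented by $A$. Capping $\partial A$ off by $D$ modifies $\HK$ into an embedded solid torus $V\subset\sphere$, whose exterior is the exterior of a knot. Atoroidality of $\pair$ then forces this knot to be the trivial knot or a non-trivial torus knot, producing the second node of the graph. Together these give the configuration in Fig.\ \ref{fig:a110h_hopf} with the $\hopf_1$ label.

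For part \ref{itm:twotwo}, let $A$ be of type $2$-$2$ with associated separating disk $D\subset\HK$; then $D$ splits $\HK$ into two solid tori $V_1,V_2$, and the component of $\partial A$ disjoint from $\partial D$ lies on, say, $\partial V_1$. The same regular-neighborhood construction again produces a Seifert fibered piece adjacent to $A$. The remaining part of $\Compl\HK$ is homeomorphic to the exterior in $\sphere$ of the two-component link $cV_1\cup cV_2$, and I would enumerate the possible JSJ-structures of this link exterior compatible with Fig.\ \ref{tab:char_daigram}. Up to symmetry and the ambient genus-two constraint, these turn out to be exactly three, corresponding to the link being Hopf-like (giving Fig.\ \ref{fig:a110h_hopf} with label $\hopf_2$), a non-trivial torus-knot cable (giving Fig.\ \ref{fig:a210h.hopf}), or fully hyperbolic (giving Fig.\ \ref{fig:a303hi}).

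The main obstacle will be this last case analysis. The principal tools are Lemma \ref{lm:typeKMS}, which constrains which admissibly I-/Seifert fibered or simple pieces can occur at each node, the atoroidality hypothesis ruling out extra essential tori in the link exterior, and the Koda-Ozawa classification from \cite{KodOzaGor:15} ruling out exotic characteristic annuli in that exterior. Configurations leading to a JSJ-graph absent from Fig.\ \ref{tab:char_daigram}, or to annulus types absent from Fig.\ \ref{tab:ann_daigram}, must be excluded one by one, and this combinatorial bookkeeping is where I expect the bulk of the proof effort to lie.
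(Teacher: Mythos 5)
The paper offers no proof of this lemma: it is imported verbatim from \cite{Wan:22p} (Theorems $1.4$ and $1.6$), so there is no in-paper argument to compare yours against; I can only judge the sketch on its own terms, and it has several genuine gaps. The most basic one is that you never show the type $2$ annulus $A$ is characteristic. The relative JSJ-graph records only components of the characteristic surface, and the existence of an essential annulus of a given type says nothing about $\annhk$ until that annulus (or one of the same type) is shown to be isotopic to a component of the characteristic surface --- compare Theorem \ref{teo:fourone_classification}, where type $4$-$1$ annuli turn out never to be characteristic. Related to this, you never determine whether $A$ separates $\Compl\HK$, which decides whether its edge is a loop; this is not obvious for a type $2$-$2$ annulus, whose boundary consists of one separating and one non-separating curve on $\partial \HK$. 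Second, the lemma asserts the \emph{global} shape of $\annhk$, including all characteristic annuli away from $A$; your construction only produces nodes adjacent to $A$ and defers everything else to an unexecuted case analysis, which is exactly where the content lies (the three graphs in (ii) differ precisely in what happens away from the $\hopf_2$-labelled edge).

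Two concrete steps are also unjustified or wrong as stated. In (ii), the complement in $\Compl\HK$ of the Seifert piece is not the exterior of $cV_1\cup cV_2$: that link exterior equals $\Compl\HK\cup\rnbhd{D}$, i.e.\ it is obtained from $\Compl\HK$ by attaching a $2$-handle along $\partial D$, so it properly contains $\Compl\HK$ rather than sitting inside it; any identification of a piece of $\Compl\HK$ with this link exterior needs an argument. In (i), atoroidality of $\Compl\HK$ does not automatically pass to the larger manifold $\Compl{V}=\Compl\HK\cup\rnbhd{D}$ (an essential torus there need not avoid the attached $2$-handle), and your dichotomy ``trivial knot or non-trivial torus knot'' is never reconciled with the single target graph of Fig.\ \ref{fig:a110h_hopf}, whose genus-two node is a \emph{simple} piece: a torus-knot exterior is Seifert fibered and carries an essential non-frontier-parallel annulus, so you would have to explain why that case does not produce a different graph, or why it cannot occur. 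As it stands the proposal is an outline of where a proof might go, not a proof.
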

 
For type $3$-$3$ii annuli, \cite[Theorem $1.6$]{Wan:22p} and Lemma \ref{lm:ltwo_to_trivial_slope} implies the following.
\begin{lemma}\label{lm:typethreethree_ii} 
If $\Compl\HK$ contains a type $3$-$3$ii 
annulus, then $\annhk$ is one of the following:
\[
\includegraphics[scale=.25]{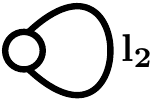},\quad
\includegraphics[scale=.27]{r303hi}
.\]
\end{lemma}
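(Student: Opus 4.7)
The plan is to deduce the lemma in two moves: first pin down the slope of a type $3$-$3$ii annulus, then feed that slope information into the existing classification in \cite{Wan:22p}. So let $A\subset\Compl\HK$ be a type $3$-$3$ii annulus. The immediate application of Lemma~\ref{lm:ltwo_to_trivial_slope} gives us that $A$ has trivial slope pair $(0,0)$; this already excludes any configuration where type $3$-$3$ annuli must carry a slope pair of the form $(\frac{p}{q},\frac{q}{p})$ with $pq\neq 0$ or $(\frac{p}{q},pq)$ with $p\neq\pm 1$. Thus the task reduces to enumerating which relative JSJ-graphs can host a type $3$-$3$ annulus with trivial slope.

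Next, I would appeal to \cite[Theorem $1.6$]{Wan:22p}, which is the companion to the type $2$-$2$ classification used in Lemma~\ref{lm:typetwo}\ref{itm:twotwo}: it lists all possible relative JSJ-graphs of $\pair$ when $\Compl\HK$ contains a type $3$-$3$ annulus, recording also the slope constraint attached to the annulus in each case. Going through that list and keeping only the entries compatible with the trivial slope $(0,0)$, exactly two configurations survive, namely \includegraphics[scale=.23]{r110h.l2} and \includegraphics[scale=.25]{r303hi}. The remaining types of relative JSJ-graphs permitting a $\link$-labeled edge are ruled out because the corresponding annuli are forced to have a non-trivial slope pair by the structure of their Seifert fibered neighbourhood in $\ComplS$.

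The main obstacle in executing this plan is really bookkeeping rather than mathematics: one needs to be confident that the slope data appearing in \cite[Theorem $1.6$]{Wan:22p} genuinely distinguishes type $3$-$3$i from type $3$-$3$ii annuli across all the candidate relative JSJ-graphs. In particular, for graphs such as \raisebox{-.2\height}{\includegraphics[scale=.2]{r303hi}}, where two $\link$-labelled edges occur, one has to check that at most (and in fact exactly, in the trivial-slope case) one of them can be realized by a type $3$-$3$ii annulus; this follows from the uniqueness of the separating disk $\Disk A\subset\HK$ disjoint from $\partial A$ and a compression argument analogous to the proof of Lemma~\ref{lm:ltwo_to_trivial_slope}. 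Once these compatibility checks are in place, combining them with the reduction via Lemma~\ref{lm:ltwo_to_trivial_slope} yields the claimed two-entry list.
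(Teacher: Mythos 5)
Your proposal matches the paper's own (very terse) argument: the paper derives Lemma \ref{lm:typethreethree_ii} precisely by combining Lemma \ref{lm:ltwo_to_trivial_slope} (a type $3$-$3$ii annulus has trivial slope) with the slope-annotated classification in \cite[Theorem $1.6$]{Wan:22p}, exactly as you describe. The additional compatibility checks you flag are reasonable diligence but do not change the route.
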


In the case $\charE$ is a $\theta$-graph, we have 
the converse of Lemma \ref{lm:typetwo}\ref{itm:twotwo}
by Lemma \ref{lm:trivial_slope_to_ltwo} and Theorem \cite[Theorem $1.5$]{Wan:22p}.  
\begin{lemma}\label{lm:theta}
If $\charE$ is \raisebox{-.3\height}{\includegraphics[scale=.25]{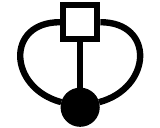}}, 
then $\annhk$ is 
\raisebox{-.3\height}{\includegraphics[scale=.25]{r303hi}}.
\end{lemma}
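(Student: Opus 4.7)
My plan is to reduce the lemma to two pieces of input already developed in the paper. The key observation is that Fig.~\ref{fig:a303hi} encodes exactly the condition that $\charE$ is the $\theta$-graph and each of the three characteristic-annulus edges carries a label in $\{\link_1,\link_2\}$, i.e.\ is of type $3$-$3$. Thus the task reduces to showing that in the $\theta$-graph case every characteristic annulus is of type $3$-$3$.

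To carry this out, I would first invoke \cite[Theorem~1.5]{Wan:22p}: when $\charE$ is the $\theta$-graph Fig.~\ref{fig:303h}/\ref{fig:303i}, each characteristic annulus $A$ is forced to be of type $3$-$3$ with trivial slope pair $(0,0)$. This already pins down the underlying shape of $\annhk$ as in Fig.~\ref{fig:a303hi}. Second, to refine the $\link$-label on each edge, I would exploit the unique separating disk $\Disk{A}\subset\HK$ disjoint from $\partial A$ guaranteed by \cite[Lemma~2.3]{FunKod:20} together with Lemma~\ref{lm:trivial_slope_to_ltwo}: with trivial slope, the presence of a disjoint type $2$-$2$ companion annulus forces the label $\link_2$ (type $3$-$3$ii), while in its absence $A$ is of type $3$-$3$i (label $\link_1$). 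In either case the labels lie in $\{\link_1,\link_2\}$, matching Fig.~\ref{fig:a303hi} exactly.

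The main obstacle is the first step: ruling out all types other than $3$-$3$ on each characteristic edge. Type $3$-$1$ and type $4$-$1$ are disposed of immediately by the standing atoroidality of $\pair$ and by Theorem~\ref{intro:teo:hk_w_fourone}, respectively, since the latter would force $\charE$ to be \includegraphics[scale=.25]{100i} or \includegraphics[scale=.25]{200i} rather than a $\theta$-graph. The delicate cases are types $2$-$\ast$ and $3$-$2$, whose presence would introduce, on the relevant side of the annulus, a Seifert fibered solid torus with $\cb=1$ in $\ComplS$, which is incompatible with the two-node, three-edge pattern of Fig.~\ref{fig:303h}/\ref{fig:303i}; this exclusion is precisely the content of \cite[Theorem~1.5]{Wan:22p}, and once imported, the lemma follows.
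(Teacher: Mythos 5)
Your proposal starts from a misreading of the target diagram, and the error propagates through the whole argument. Fig.~\ref{fig:a303hi} does \emph{not} have all three edges labelled by $\link$-symbols: one of its edges carries the label $\hopf_2$. You can see that this must be so from Lemma~\ref{lm:typetwo}\ref{itm:twotwo}, which lists the $\theta$-graph diagram among the relative JSJ-graphs of handlebody-knots admitting a type $2$-$2$ annulus; since in the $\theta$-graph case there are no non-characteristic annuli at all (Theorem~\ref{intro:teo:number}(iii)), such a type $2$-$2$ annulus is necessarily one of the three edges. This is also exactly why the text describes Lemma~\ref{lm:theta} as the \emph{converse} of Lemma~\ref{lm:typetwo}\ref{itm:twotwo}: the substantive content of the lemma is that the $\theta$-graph forces the \emph{existence} of a type $2$-$2$ characteristic annulus (this is what \cite[Theorem~$1.5$]{Wan:22p} supplies, together with the fact that the remaining two characteristic annuli are of type $3$-$3$ with trivial slope), not the exclusion of type $2$ labels. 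Your plan to ``rule out all types other than $3$-$3$ on each characteristic edge'' is therefore aimed at the wrong statement, and the structural claim you use to do it --- that a type $2$-$\ast$ or $3$-$2$ annulus would create a Seifert fibered solid torus with $\cb{}=1$ --- is unsupported; the valence-three solid torus node of the $\theta$-graph is perfectly compatible with one of its frontier annuli being of type $2$-$2$.

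The error also disables your second step. Lemma~\ref{lm:trivial_slope_to_ltwo} requires a type $2$-$2$ annulus disjoint from $A$ as a hypothesis; having excluded all type $2$ annuli, you can never invoke it, so you have no way to decide between $\link_1$ and $\link_2$ on the remaining edges. Moreover, your fallback ``in its absence $A$ is of type $3$-$3$i'' is not what Lemma~\ref{lm:trivial_slope_to_ltwo} says: it is a sufficient condition for type $3$-$3$ii, not a dichotomy (indeed Lemma~\ref{lm:ltwo_to_trivial_slope} shows only that $3$-$3$ii implies trivial slope, and the text explicitly notes the converse fails in general). The correct route, which the paper takes, is: import from \cite[Theorem~$1.5$]{Wan:22p} that the three characteristic annuli consist of one type $2$-$2$ annulus and two type $3$-$3$ annuli of trivial slope, and then apply Lemma~\ref{lm:trivial_slope_to_ltwo} --- whose hypothesis is now satisfied because the type $2$-$2$ annulus is disjoint from the other two --- to upgrade the type $3$-$3$ annuli to type $3$-$3$ii, yielding the labels of Fig.~\ref{fig:a303hi}.
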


\begin{lemma}[{\cite[Lemma $2.3$]{Wan:23p}}]\label{lm:i_stick}
If $\charE$ is \includegraphics[scale=.2]{100i}, then $\annhk$ is \includegraphics[scale=.22]{r100i}. 
\end{lemma}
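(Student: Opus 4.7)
My plan is to decode the hypothesis $\charE=\;$\includegraphics[scale=.2]{100i} via Lemma~\ref{lm:typeKMS}\ref{itm:typeK} to identify the I-fibered component and the characteristic annulus configuration, then determine the Koda--Ozawa type of each characteristic annulus. By Lemma~\ref{lm:typeKMS}\ref{itm:typeK}, the hypothesis produces an I-fibered component $X\subset\ComplS$ over the once-punctured Klein bottle $\pklein$. By Proposition~$2.21$ of \cite{Wan:22p}, $X$ is the unique component of $\ComplS$ with $\gb X=2$, and every other component is a Seifert fibered solid torus adjacent to $X$; reading off Fig.~\ref{fig:100i} then pins down the number of such tori and the configuration of characteristic annuli.

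For each characteristic annulus $A$, I would analyze its Koda--Ozawa type. Because $A$ bounds a Seifert fibered solid torus $Y$ whose $\partial_b Y$ is an annulus in $\partial\HK$, the two components of $\partial A$ are parallel in $\partial\HK$. Atoroidality rules out type~$1$; this parallelism rules out type~$2$, since if one component of $\partial A$ bounded a disk in $\HK$, sliding its boundary along $\partial_b Y$ would produce a disk bounded by the other component, forcing both to bound and hence type~$1$; and Theorem~\ref{intro:teo:hk_w_fourone} rules out type~$4$-$1$ because any such annulus is non-characteristic. Hence $A$ must be of type $3$-$2\star$ or $3$-$3\star$ for some $\star\in\{\mathrm{i},\mathrm{ii}\}$.

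To pin down the sub-type, I would use Lemma~\ref{lm:fourone_emknots} to realize $\pair$ as an induced handlebody-knot $\pairpnind$ of an Eudave-Mu\~noz knot, so that $\HK^\pm$ is the double branched cover of a trivial $3$-string tangle. Lifts of the untangling disks of this tangle, combined with the slope formula of Lemma~\ref{lm:slope} and the cokernel conditions isolating $\charE=\;$\includegraphics[scale=.2]{100i} in Theorem~\ref{teo:fourone_classification}, produce explicit essential disks in $\HK$ disjoint from $\partial A$. Whether such a disk separates the components of $\partial A$, and whether $A$ remains essential in the exterior of the disk in $\HK$, then pins down the sub-type, which should match the label of \includegraphics[scale=.22]{r100i} in Fig.~\ref{fig:a100i}. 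The main obstacle is this compression disk analysis, which requires careful tangle arithmetic in the Eudave-Mu\~noz model to confirm both the separating/non-separating nature of the disk and the essentiality of $A$ after cutting.
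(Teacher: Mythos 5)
First, a point of comparison: the paper does not prove this lemma at all --- it is imported verbatim as \cite[Lemma $2.3$]{Wan:23p} (with a remark translating notation), so there is no internal proof to measure your argument against; it has to stand on its own. Your structural setup is sound: Lemma \ref{lm:typeKMS}\ref{itm:typeK} and the recalled facts from \cite[Proposition $2.21$]{Wan:22p} do show that the single characteristic annulus $A$ is the frontier between the I-fibered component and a Seifert fibered solid torus $Y$ with $\partial_b Y$ an annulus, so the components of $\partial A$ are parallel in $\partial\HK$, and your exclusion of types $1$, $2$ and $4$-$1$ is correct (you should also dispose of type $3$-$1$, which the paper notes forces reducibility and hence toroidality). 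But you underuse your own parallelism observation: by the definitions recalled in Section 2, a type $3$-$3$ annulus has \emph{non-parallel} boundary components, so parallelism already excludes type $3$-$3\star$, and together with Theorem \ref{teo:fourone_classification} and atoroidality this pins $A$ down to type $3$-$2$ with no further input.

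The genuine gap is the sub-type, which is the entire content of the label $\knot_1$ in the target relative JSJ-graph. Your plan --- invoke Lemma \ref{lm:fourone_emknots} to pass to an Eudave-Mu\~noz tangle model and then ``produce explicit essential disks'' --- is both logically premature and not carried out. Lemma \ref{lm:fourone_emknots} needs a type $4$-$1$ annulus in $\Compl\HK$; for an arbitrary handlebody-knot with this JSJ-graph that existence is only guaranteed by Theorem \ref{teo:typeK}, a much later and harder result, so you would be importing heavy machinery and would still have to treat separately the $\HK^+$ and $\HK^-$ realizations appearing in Theorem \ref{teo:fourone_classification}. A route more consistent with the paper's own toolkit is Lemma \ref{lm:cri_threetwo_ii}: identify the non-separating disk $D\subset\HK$ disjoint from $\partial A$ and show that the core of the solid torus it cuts off is a non-trivial knot in $\sphere$, which is exactly what distinguishes type $3$-$2$i from $3$-$2$ii. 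You acknowledge that this ``compression disk analysis'' is the main obstacle; as written, the decisive step is a plan rather than a proof.
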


\begin{remark}
The notation is different in \cite{Wan:23p}. 
First, the JSJ-graph is called characteristic diagram there, and it does not distinguish I-fibered and Seifert fibered components---both are filled circles.
Secondly, for a type $3$-$2$ 
annulus $A$, there is a well-defined slope $r$, that is, the slope of 
the core of $A$ with respect to the solid torus cut off by $A$ from $\Compl\HK$, and hence the label $\knot_1(r)$ in 
\cite[Lemma $2.3$]{Wan:23p}.
\end{remark}

\subsubsection*{Proof of Theorem \ref{intro:teo:ann_diag}} 
Note first that Figs.\ \ref{fig:a100h}, \ref{fig:a200hi},
and \ref{fig:a300hi} follow directly from Theorem \ref{teo:fourone_classification} since 
no type $4$-$1$ annulus is characteristic.
Fig.\ \ref{fig:a100i} is Lemma \ref{lm:i_stick}, and there is nothing to prove about Figs.\ \ref{fig:a110h_hopf}, \ref{fig:a110h_link} 
since all cases may occur.
To see Figs.\ \ref{fig:a210h.hopf} and \ref{fig:a210h.link}, we note that the 
loop edge cannot represent a type $2$-$1$ 
or a type $3$-$3$ii annulus by
Lemmas \ref{lm:typetwo}\ref{itm:twoone}
and \ref{lm:typethreethree_ii}. 
Fig.\ \ref{fig:a201h} is a consequence of Lemmas \ref{lm:typetwo} and \ref{lm:typethreethree_ii}, 
since no other types of annuli can occur as an 
edge of a bigon, and Fig.\ \ref{fig:a303hi} is a result of
Lemma \ref{lm:theta}.

Lastly, consider Fig.\ \ref{fig:a301hi}.
Let $A,A',A''$ be the annuli corresponding to 
the two edges of the bigon and the other edge, 
respectively. 
As with the previous case, $A,A'$ are of 
type $3$-$3$i; in addition, by \cite[Corollaries $3.5$, $3.10$]{Wan:22p}, 
$A,A'$ have the same slope pair $(\frac{p}{q},pq)$ 
with $\vert p\vert>1$ and $\partial A,\partial A'$ are parallel. On the other hand, by Theorem \ref{teo:fourone_classification}, 
$A''$ is of type $3$-$2$, so it suffices to show $A''$ 
cannot be of type $3$-$2$ii. 

Let $X$, $Y$ be the solid tori cut off from $\Compl\HK$ by 
$A\cup A'$, $A''$, respectively, and $Z$
the exterior of $X\cup Y$ in $\Compl\HK$. 
Also, let $l_1,l_2$ (resp.\ 
$l_1',l_2'$) be the components of $\partial A$ (resp.\ $\partial A'$); it may be 
assumed that $l_i,l_i'$ are parallel in $\partial\HK$, 
$i=1,2$, and there exists a unique essential separating disk 
disjoint from $l_i,l_i'$, $i=1,2$.
Let $\rnbhd{l_i}$ be a regular neighborhood of $l_i$ disjoint from $\partial A',\partial A''$, and  
$P$ be the $4$-punctured sphere $\partial\HK-\openrnbhd{l_1} \cup\openrnbhd{l_2}$.

Note that $l_1',l_2'$ cut off two annuli $B_1,B_2$ 
from $P$, respectively, and $\partial A''$ cuts off an annulus 
$B''$ from $P$ with $B''$ disjoint from $B_1\cup B_2$.
Let $l''$ be a component of $\partial B''$. Then
$l''$ either is parallel to a component of $\partial P$
or cuts $P$ into two pairs of pants. 

Suppose $l''$ is parallel to a component of $\partial P$. Then it may be assumed that 
$\partial B''$ and $\partial B_2$ are parallel, 
and hence $B''\cup B_2$ cuts an annulus $B$ 
from $P$. The frontier of a regular neighborhood of 
$A'\cup B\cup A''$ consists of three components, and the one in $Z$ is an annulus $\hat A$ of type $3$-$3$ 
since $\partial \hat A$ is parallel to $\partial A$
and also to $\partial A'$. In particular, $\hat A$ is essential and admissible in $Z$; also, it is non-$\front$-parallel since $\gb Z=2$, contradicting $Z$ is hyperbolic 
or I-fibered over a pair of pants.
 
Suppose $l''$ cuts $P$ into two pairs of pants. Let $l_{i\plusminus}$
be the components of $\partial P$ 
that meet $\rnbhd{l_i}$, $i=1,2$.
Then $l''$ either separates 
$l_{1\plus}\cup l_{1\minus}$ from $l_{2\plus}\cup l_{2\minus}$ 
or separates $l_{1\plus}\cup l_{2\plusminus}$ 
from $l_{1\minus}\cup l_{2\minusplus}$. 
In the former, $[l'']=0$ in $H_1(\Compl\HK)$, 
contradicting $A''$ is 
essential and separating.
In the latter, 
we have $[l'']=[l_1]\pm [l_2]$ in $H_1(Z)$. Consider the exterior $\Compl Y=X\cup Z$ of $Y\subset \Compl\HK$, and note that by \cite[Lemma $2.1$]{Wan:23p}, to see $A''$ cannot be of type $3$-$2$ii, it suffices to show that 
the quotient group
$H_1(X\cup Z)/\langle [l'']\rangle$
is not $\mathbb{Z}$. 
Since $[l_1]=[l_2]=px\in H_1(X\cup Z)$, for some $x\in H_1(X\cup Z)$, 
$[l'']$ is either $2px$ or $0$ in $H_1(X\cup Z)$; in neither case, $H_1(X\cup Z)/\langle [l'']\rangle$ 
is $\mathbb{Z}$.

 

\section{Non-characteristic annuli}\label{sec:nonchar}
We investigate how different types of non-characteristic annuli may occur in $\Compl\HK$.


 
\subsection{Types $\Mo$ and $\Se$}
Suppose $\pair$ is of type $\Mo$ or of type $\Se$.  
Then by Sections \ref{subsec:typeM}-\ref{subsec:typeS}, $\Compl\HK$ admits two non-characteristic annuli. To examine 
possible types of the two non-characteristic annuli, we consider first the following criterion for a type $3$-$2$ annulus to be of type $3$-$2$ii. 
Recall that given a type $3$-$2$ annulus $A$, there exists a unique non-separating disk 
$\Disk A\subset \HK$ disjoint from $\partial A$ by 
\cite[Lemma $3.6$]{FunKod:20}. Denote by $V\subset \HK$ the solid torus cut off by $\Disk A$.  
 
\begin{lemma}\label{lm:cri_threetwo_ii}
Given a type $3$-$2$ annulus $A$, 
it is of type $3$-$2$ii if and only if 
$(\sphere,cV)$ is a trivial knot.
\end{lemma}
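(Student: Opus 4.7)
The plan is to first reduce the type $3$-$2$ii condition to a purely topological statement about $\partial$-parallelism, and then establish the equivalence with $cV$ being trivial using the structural role of $\Disk A$ together with atoroidality.

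First I will show that $A$ is always incompressible in $\Compl V$. Given a hypothetical compressing disk $E\subset\Compl V$ for $A$, since $\partial\Disk A$ is disjoint from $A$ and $\partial E$ can be taken disjoint from $\partial A$, the intersections $E\cap\Disk A$ may be arranged to be circles only. Standard innermost-circle surgeries on $E$ using innermost subdisks of $\Disk A$ remove all intersections without destroying the compressing property. The resulting $E$ lies in $\Compl V\setminus\rnbhd(\Disk A)\cong\Compl\HK$, yielding a compressing disk for $A$ in $\Compl\HK$ and contradicting essentiality. Hence type $3$-$2$ii is equivalent to $A$ being $\partial$-parallel in $\Compl V$.

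Suppose now $A$ is $\partial$-parallel in $\Compl V$, with parallelism $T_0\cong A\times I$ cobounding $A$ with an annulus $A'\subset\partial V$. Since $A$ is not $\partial$-parallel in $\Compl\HK$, $T_0$ cannot be contained in $\Compl\HK$; by connectedness (using $\Disk A\cap\partial T_0=\emptyset$), this forces $\Disk A\subset T_0$, and so $\rnbhd_{\HK}(\Disk A)\subset T_0$ and $A'$ contains both attaching disks $D'_+,D'_-$. Because $\partial A'$ is a pair of longitudes of $T_0$, the annular gluing $V\cup T_0$ collapses onto $V$, so $V\cup T_0$ is a solid torus in $\sphere$ ambient isotopic to $V$. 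Let $T_0':=\overline{\sphere-(V\cup T_0)}$; then the torus $\widehat T:=A\cup A''$, where $A''=\overline{\partial V-A'}$, separates $\sphere$ into $V\cup T_0$ and $T_0'\subset\Compl\HK$.

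For the $(\Rightarrow)$ direction, I show $T_0'$ is a solid torus, which makes $\widehat T$ an unknotted torus and hence forces $cV$ trivial. Since $T_0'\cong\Compl V$ as abstract $3$-manifolds (via the collar collapse of $T_0$), if $cV$ were non-trivial, essential surfaces in $T_0'\cong\Compl V$ (essential tori in the satellite case, or structural annuli in the Seifert/cable case) would persist as essential surfaces in $\Compl\HK$: incompressibility transfers because $A$ is essential in $\Compl\HK$, so any compressing disk could be made disjoint from $A$ by innermost surgery. Such an essential torus would contradict atoroidality of $\pair$, and the corresponding annular case would conflict with the uniqueness of $\Disk A$ via the structural constraints in Section~\ref{subsec:classification}. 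Thus $T_0'$ is a solid torus and $cV$ is trivial.

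For the $(\Leftarrow)$ direction, assume $(\sphere,cV)$ is trivial, so $\Compl V$ is an unknotted solid torus; the incompressible annulus $A$ is then either $\partial$-parallel or a $(p,q)$-cabling annulus with $|p|\ge 2$. In the cabling case, $A$ cuts off a cable space $C$ from $\Compl V$; since $H^2$ is connected and disjoint from $A$, it lies entirely on one side of $A$, so the cable-space side contributes an essential torus (the frontier between $C$ and the companion solid torus) that survives in $\Compl\HK=\Compl V\setminus\mathrm{int}(H^2)$, contradicting atoroidality of $\pair$. Hence $A$ is $\partial$-parallel, i.e.\ of type $3$-$2$ii.

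The principal obstacle is the detailed transportation of essential surfaces from $T_0'\cong\Compl V$ to $\Compl\HK$ in the $(\Rightarrow)$ direction and, symmetrically, the exclusion of cabling in the $(\Leftarrow)$ direction; both rely on combining atoroidality with the structural consequences of $A$ being essential and of type $3$-$2$.
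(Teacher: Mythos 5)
Your reduction of the statement to ``$A$ is $\partial$-parallel in $\Compl V$'' and your identification of the parallelism region $T_0$ as the side containing $\Disk A$ follow the same skeleton as the paper, and the incompressibility step (pushing a compressing disk off $\Disk A$ into $\Compl\HK$) is a workable, if differently organized, substitute for the paper's argument. The genuine gap is in the $(\Rightarrow)$ direction, at the point where you must show $T_0'=\overline{\sphere-(V\cup T_0)}$ is a solid torus. You argue by contradiction from ``essential surfaces in $T_0'\cong\Compl V$'': essential tori if $cV$ is a satellite, essential annuli if it is a torus/cable knot. This trichotomy omits the hyperbolic case: if $cV$ were a non-trivial hyperbolic knot, $\Compl V$ would contain no essential torus and no essential annulus at all, so there is nothing to transport and no contradiction is produced. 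Moreover, even in the torus-knot case, the cabling annulus of $T_0'$ has its boundary on the torus $A\cup A''$ rather than on $\partial\HK$, so it is not a properly embedded annulus in $\Compl\HK$, and the appeal to ``uniqueness of $\Disk A$ via the structural constraints'' is not an argument. The paper avoids all of this: for a type $3$-$2$ annulus the components of $\partial A$ are parallel in $\partial\HK$ and (by atoroidality) $A$ cuts off a solid torus $X$ from $\Compl\HK$; one then observes that $X$ is exactly the exterior of $V\cup T_0$, so $c(V\cup T_0)=cV$ has solid-torus exterior and is trivial with no case analysis on the geometry of $cV$. If you do not want to quote that structural fact, the correct replacement for your case analysis is to show that the torus $A\cup A''$ itself would be essential in $\Compl\HK$ whenever $cV$ is knotted, which again contradicts atoroidality directly.

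Two smaller points. First, in the $(\Leftarrow)$ direction the ``$(p,q)$-cabling annulus'' alternative is vacuous: every incompressible annulus in a solid torus is $\partial$-compressible (the only incompressible, $\partial$-incompressible surface in a solid torus is a meridian disk), hence $\partial$-parallel; your attempted refutation via ``the frontier between $C$ and the companion solid torus'' does not make sense inside a solid torus, but since the case cannot occur the conclusion survives --- this is exactly the paper's one-line ``no essential annulus exists in a solid torus.'' Second, making a compressing disk disjoint from $\Disk A$ does not by itself place it in $\Compl\HK=\Compl V-\openrnbhd{\Disk A}$; a further isotopy pushing it out of $\rnbhd{\Disk A}$ is needed. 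That is routine, but should be said.
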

\begin{proof}
The ``if'' direction is clear as no 
essential annulus exists in a solid torus. 

To see the ``only if'' direction, we let 
$X$ be the solid torus cut off by $A$ from $\Compl\HK$, 
and $B_x$ the annulus $X\cap V$. 
Denote by $Y$ the exterior of $X$ in $\Compl{V}$
and by $B_y$ the annulus $Y\cap V$. 
Note also that $\rnbhd{\Disk A}\subset Y$.

Suppose now that $A$ is of type $3$-$2$ii. 
Then $A$ is compressible or $\partial$-compressible in $\Compl V$. 
If $A$ is compressible, and $D$ is a compressing disk, then either $D\subset X$
or $D\subset Y$. The former is impossible
since $A\subset\Compl\HK$ is incompressible; the latter cannot happen either, for if it does, then 
$D$ is a disk in $\Compl X$, 
and hence the core of $A$ is a longitude of 
$X\subset \sphere$, and $A$ is then parallel to $B_x$ through $X$.  
Therefore $A$ is $\partial$-compressible in $\Compl V$. 

Suppose $D$ is a $\partial$-compressing disk of $A$ 
in $\Compl V$. Then $D\subset Y$, and 
hence $A$ is parallel to $B_y$ through $Y$ in $\Compl V$. In particular, $V\cup Y$ 
is a solid torus, and $(\sphere, cV)$ and 
$(\sphere, c(V\cup Y))$ are equivalent. 
Since $X=\Compl{V\cup Y}$ is a solid torus, so $(\sphere,cV)$ is trivial.
\end{proof}

Suppose now $\pair$ is of type $\Mo$, and suppose, 
in addition $\Compl\HK$ 
admits a type $4$-$1$ annulus $A_1$;
note that by Theorem \ref{teo:fourone_classification},
$A_1$ is non-characteristic, and there is an Eudave-Mu\~noz knot $\pairem$ 
whose induced handlebody-knot $\pairp$ is 
equivalent to $\pair$. Then we have the following. 

\begin{theorem}\label{teo:typeM}
The other non-characteristic annulus $A_2\subset\Compl\HK$ is of type $3$-$2$, and it is of type $3$-$2$ii if and only if $p=0,-1$.
\end{theorem}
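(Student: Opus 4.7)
The plan is to pin down $A_2$ via the I-bundle description in the type $\Mo$ case, then apply Lemma \ref{lm:cri_threetwo_ii}. In the I-bundle $\pi_T\colon Y\cup X'\to\pmobius$ appearing in the proof of Theorem \ref{teo:fourone_classification}, the once-punctured M\"obius band $\pmobius$ admits, up to isotopy, a unique essential non-$\partial$-parallel two-sided simple closed curve $\gamma$ (separating a pair of pants from a M\"obius band in $\pmobius$) and a unique essential one-sided simple closed curve $\delta$ (the core of the M\"obius band part). The type $4$-$1$ annulus $A_1$ equals $A'=\pi^{-1}(\rmdisk')=\pi_T^{-1}(\gamma)$, so the other non-characteristic annulus $A_2$ is the frontier in $\Compl\HK$ of a regular neighborhood of the M\"obius band $M':=\pi_T^{-1}(\delta)\subset X'$. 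The crucial structural fact is that the core of $M'$ equals the core of $X'$, namely the Eudave-Mu\~noz knot $K_{(l,m,n,p)}$.

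To verify $A_2$ is of type $3$-$2$, I rule out the other atoroidal possibilities. Essentiality is inherited from the vertical--horizontal theorem. Since $\partial M'$ represents twice the generator of $H_1(X')$, the two parallel copies $\partial A_2\subset A'':=\partial X'\cap\partial\HK^+$ are non-trivial on $\partial\HK^+$, excluding types $2$-$1$ and $2$-$2$. Type $4$-$1$ is impossible because $A_1$ already accounts for the unique such annulus under the JSJ-graph \includegraphics[scale=.18]{200i} by Theorem \ref{intro:teo:nonchar}\ref{intro:itm:two}, and type $3$-$1$ is ruled out by atoroidality of $\pair$. For type $3$-$2$ rather than $3$-$3$: using the trivial $3$-string tangle structure of $(\ball^+, k_0\cap\ball^+)$, I choose a splitting disk $D'\subset\ball^+$ separating one string from the other two whose boundary avoids $\rmdisk'':=\partial\ball'\cap\partial\ball^+$; its lift is a non-separating disk $\Disk{A_2}\subset\HK^+$ disjoint from $\partial A_2\subset A''=\pi^{-1}(\rmdisk'')$. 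Existence of this non-separating disk, together with \cite[Lemma~2.3]{FunKod:20}, places $A_2$ in type $3$-$2$.

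With type $3$-$2$ established, Lemma \ref{lm:cri_threetwo_ii} reduces the question to whether $(\sphere, cV)$ is the trivial knot, where $V\subset\HK^+$ is the solid torus cut off by $\Disk{A_2}$. Under the branched cover, $V=\pi^{-1}(B_2)$ with $B_2$ the two-string side of $D'$ in $\ball^+$, and $cV$ is the lift of an arc in $B_2$ joining the two arcs of $k_0\cap B_2$. Tracking this arc through the Eudave-Mu\~noz diagram and closing up via the rest of $k_0$, the knot $cV$ is realized as the closure of a composite rational tangle assembled from $\mathcal{B}=\Ra(p,-2,m,-l)$ and $\mathcal{C}'=\Ra(-2,0)$, with the outer tangles $\mathcal{A}$ and $\mathcal{C}$ contributing only trivial summands for this choice of arc.

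The main obstacle is the final continued-fraction calculation: showing that the composite tangle controlling $cV$ has numerator $\pm 1$ exactly when $p\in\{0,-1\}$. The value $p=0$ produces a continued-fraction collapse $[p,-2,m,-l]\to[m,-l]$ inside $\mathcal{B}$, and $p=-1$ produces a cancellation with the $-2$ entry; in both cases $cV$ degenerates to the unknot. For any other $p$, the numerator has absolute value at least $2$, and $cV$ is a non-trivial two-bridge knot.
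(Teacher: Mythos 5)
Your identification of $A_2$ is wrong, and it is the load-bearing step. Write $\pmobius=B\cup Q$ with $B$ the M\"obius-band part and $X'=\pi_T^{-1}(B)$, and let $\delta$ be the core of $B$. Then $\rnbhd{\pi_T^{-1}(\delta)}=\pi_T^{-1}(N(\delta))$, so its frontier is $\pi_T^{-1}(\partial N(\delta))=\pi_T^{-1}(\gamma)$, and this is isotopic to $\front X'=A'=A_1$ through the product region $\pi_T^{-1}(B-N(\delta))$. In other words, the annulus you call $A_2$ is isotopic to $A_1$ itself, not a second annulus. The false premise is that $\pmobius$ carries a unique one-sided essential simple closed curve: it carries two non-isotopic ones, which is exactly why Section \ref{subsec:typeM} produces two distinct non-characteristic annuli from the two circles $\alpha,\beta$ of Fig.~\ref{fig:pmobius}. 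The genuine $A_2$ is the frontier of a regular neighborhood of the M\"obius band over the \emph{other} one-sided curve; that M\"obius band is not contained in $X'$ and its core is not $\emK$. Consequently your ``crucial structural fact,'' the placement of $\partial A_2$ inside $A''=\partial X'\cap\partial\HK$, the choice of splitting disk $D'$, and the arc you track through the diagram are all anchored to the wrong object.

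Two further problems would survive a corrected identification. First, excluding type $4$-$1$ by citing Theorem \ref{intro:teo:nonchar}\ref{intro:itm:two} is circular: that statement is a summary of the very theorem being proved. Second, your closing computation feeds all of $\mathcal{B}=\Ra(p,-2,m,-l)$ into the tangle governing $cV$ and then asserts that triviality depends only on $p$; nothing in your sketch explains why $m$ and $l$ drop out. Compare with the paper's route: after an explicit isotopy of $(\ball^+,k_0\cap\ball^+)$ it exhibits $A_2=\pi^{-1}(\rmdisk'')$ together with a non-separating disk $\pi^{-1}(\rmdisk_n)$ disjoint from $\partial A_2$ (so type $3$-$2$ follows from the definition), and the tangle $(\ball_x,\ball_x\cap k_0)$ relevant to Lemma \ref{lm:cri_threetwo_ii} is a sum of $\Ra(2,0)$ and $\Ra(-p,2,0)$, in which only $p$ appears, yielding the criterion $p=0,-1$ directly.
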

\begin{proof}
It may be assumed that $A_1$ 
is $\pi^{-1}(\rmdisk')$ in Fig.\ \ref{fig:hkplus}, where 
$\pi:\sphere\rightarrow \asphere$ 
is the double-cover branched along $k_0$. 
Observe then that $\ball^+,k_0$ in Fig.\ \ref{fig:hkplus} 
can be deformed into Fig.\ \ref{fig:typeM_nonchar}. 
The other non-characteristic annulus $A_2$ is then given by $\pi^{-1}(\rmdisk'')$. Since the preimage $\Disk A$ 
of $\rmdisk_n\subset \ball^+$ in Fig.\ \ref{fig:typeM_nonchar} 
is a non-separating disk disjoint from $\partial A_2$; 
thus $A_2$ is of type $3$-$2$.

To see the second claim, we note first 
that $\Disk A$ cuts a solid torus 
$V$ off from $\HK$. Set 
$\ball_v:=\pi(V)$ and 
$\ball_x:=\pi(\Compl V)$
(see Fig.\ \ref{fig:typeM_BvBx}).
Secondly, the disk $\rmdisk_p$ in $\ball_x$ (see Fig.\ \ref{fig:typeM_Dp})
implies that the tangle 
$\Ta=(\ball_x,\ball_x\cap k_0)$ 
is a sum of the rational tangles 
$\Ra(2,0)$ and $\Ra(-p,2,0)$, and therefore $\Ta$ is trivial 
if and only if $R(-p,2,0)$ is integral 
if and only if $p=0$ or $-1$. 
On the other hand, by Lemma \ref{lm:cri_threetwo_ii}, $A_2$ is of type $3$-$2$ii 
if and only if $\Compl V$ is a solid torus
if and only if $\Ta$ is trivial, and the claim thus follows.
\end{proof}

\begin{figure}[t]
\begin{subfigure}{.48\linewidth}
\centering
\begin{overpic}[scale=.16, percent]{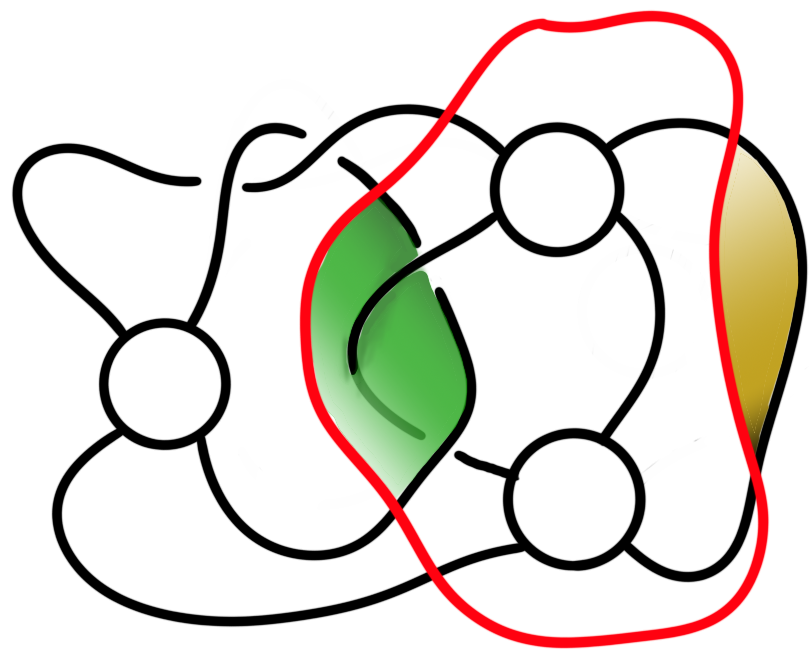}
\put(89,46){$\rmdisk_n$}
\put(45.7,33){$\rmdisk''$}
\put(66,55){\large $\mathcal A$}
\put(68,16){\large $\mathcal B$}
\put(18,30){\large \reflectbox{$\mathcal C$}}
\put(30,73){\Large $\ball^\plus$}
\end{overpic}
\caption{Disks $\rmdisk''$ and $\rmdisk_n$.}
\label{fig:typeM_nonchar}
\end{subfigure}
\begin{subfigure}{.48\linewidth}
\centering
\begin{overpic}[scale=.16, percent]{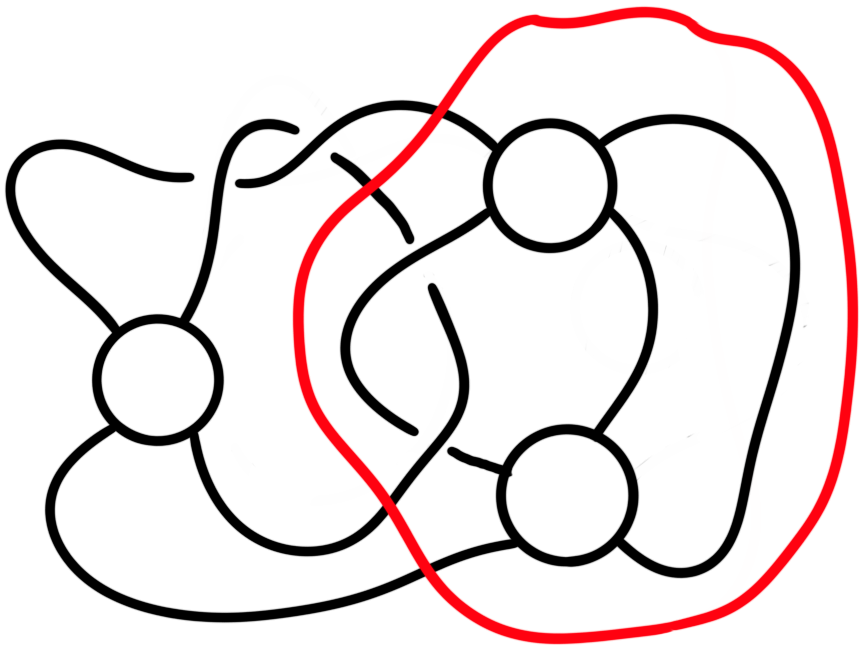}
\put(61,51){\large $\mathcal A$}
\put(63.5,14.5){\large $\mathcal B$}
\put(16.5,28){\large \reflectbox{$\mathcal C$}}
\put(10,68){\Large $\ball_v$}
\put(70,65){\Large $\ball_x$}
\end{overpic}
\caption{$V$ and $\Compl V$.}
\label{fig:typeM_BvBx}
\end{subfigure} 
\hfill
\vspace*{.2cm}
\begin{subfigure}{.32\linewidth}
\centering
\begin{overpic}[scale=.12, percent]{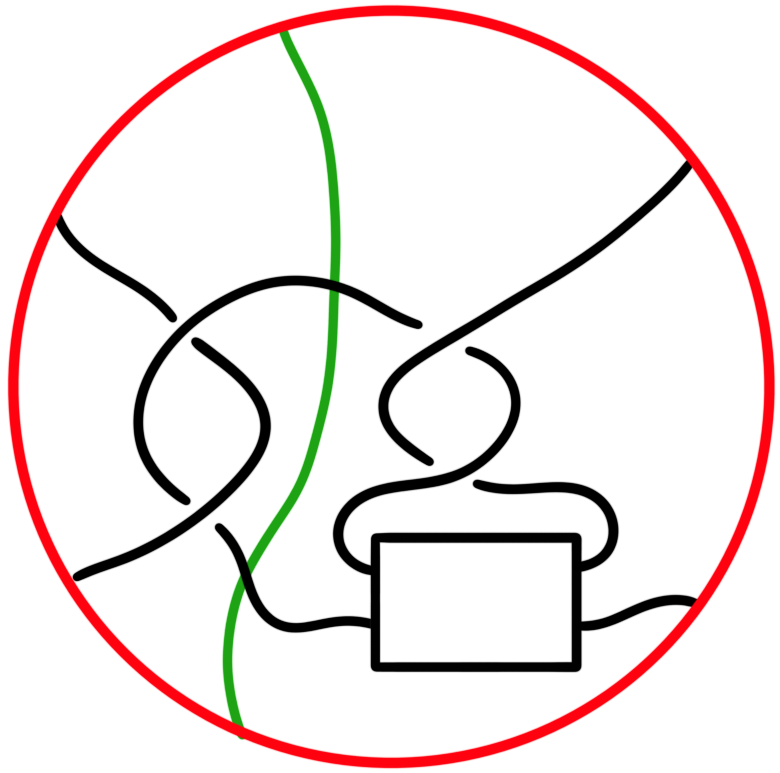}
\put(44,74){$\rmdisk_p$}
\put(58,20){\large $p$}
\put(85,90){\Large $\ball_v$}
\end{overpic}
\caption{$\Compl V$.}
\label{fig:typeM_Dp}
\end{subfigure}
\begin{subfigure}{.32\linewidth}
\centering
\begin{overpic}[scale=.12, percent]{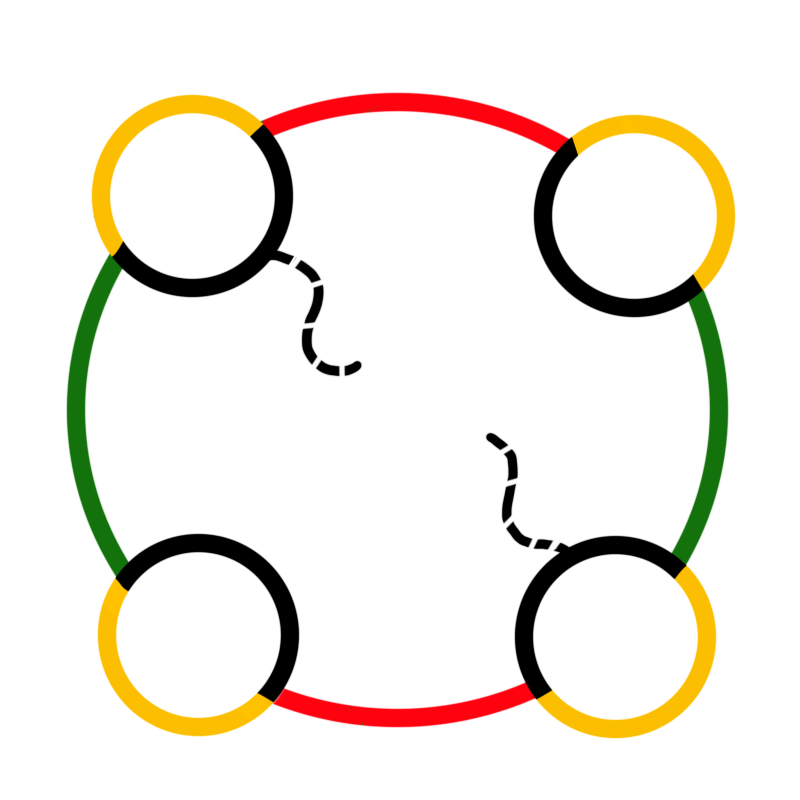}
\put(75,69){\large $V'$}
\put(73,15){\large $V$}
\put(19,16){\large $V'$}
\put(20,71){\large $V$}
\put(50,90){$A$}
\put(88,32){$A'$}
\put(50,1){$A$}
\put(87.5,8){$B$}
\put(87,83){$B'$}
\put(0,89){\huge $X$}
\end{overpic}
\caption{$q=1$.}
\label{fig:typeS_1}
\end{subfigure}
\begin{subfigure}{.32\linewidth}
\centering
\begin{overpic}[scale=.12, percent]
{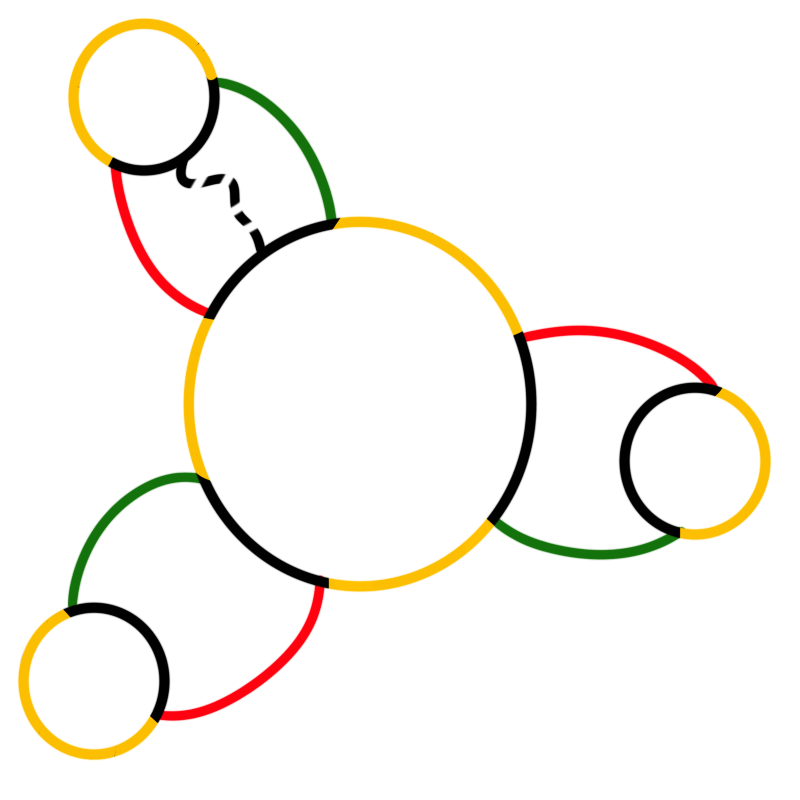}
\put(42,45){\Large $V$}
\put(82,37){\large $V'$}
\put(7,9){\large $V'$}
\put(12,83){\large $V'$}
\put(68,21){$A'$}
\put(70,60){$A$}
\put(40,80){$A'$}
\put(90,26){$B'$}
\put(55,70){$B$}
\put(80,85){\huge $X$}
\end{overpic}
\caption{$q>1$.}
\label{fig:typeS_2}
\end{subfigure}
\caption{Handlebody-knots of types $\Mo$ and $\Se$.}
\end{figure}

 
Suppose $\pair$ is of type $\Se$. 
Then its JSJ-graph is \ref{fig:201h} or \ref{fig:a301hi} by Lemma \ref{lm:typeKMS}. 
Let $A,A'$ be the two type $3$-$3$ annuli corresponding to the edges of the bigon.
Then by \cite[Lemma $3.7$, Corollaries $3.5$, $3.10$]{Wan:22p} they are of the same slope $(\frac{p}{q},pq)$ 
with $p\neq 0,\pm 1$, $q>0$.


\begin{theorem}\label{teo:typeS} 
Let $A_1,A_2$ be the two non-characteristic annuli in $\Compl\HK$.  
\begin{enumerate}[label=\textnormal{(\roman*)}]
\item If $q=1$, then $A_1,A_2$ either both are of type $3$-$2$i or both are of type $3$-$2$ii. 
\item If $q>1$, then one of $A_1,A_2$ is of type $3$-$2$i
and the other of type $3$-$2$ii. 
\end{enumerate}
\end{theorem}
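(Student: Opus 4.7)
The plan is to establish that both non-characteristic annuli $A_1, A_2$ are of type $3$-$2$, and then apply Lemma~\ref{lm:cri_threetwo_ii} to each so as to decide between types $3$-$2$i and $3$-$2$ii. First, using the Seifert fibration $\pi : X \to \sdisk$ from Section~\ref{subsec:typeS}, I would identify $A_1 = \pi^{-1}(\alpha)$ as having both boundary components on one component $B_1$ of $\partial_b X$, and $A_2 = \pi^{-1}(\beta)$ as having one boundary component on each of the two components $B_1, B_2$ of $\partial_b X$. Combining this with the decomposition $\HK = V \cup_{D} V'$, where $D \subset \HK$ is the essential separating disk dual to the type $3$-$3$ characteristic annuli $A, A'$ (so that $B_1 \subset \partial V$ and $B_2 \subset \partial V'$), one locates meridian disks of $V, V'$ in $\HK$ disjoint from $\partial A_i$; this shows both $A_1, A_2$ are of type $3$-$2$.

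Next, I would explicitly identify the solid tori $V_1, V_2 \subset \HK$ cut off by the non-separating disks $\Disk A_1, \Disk A_2$, respectively. For $A_1$, since $\partial A_1$ lies entirely on the $V$-side, $\Disk A_1$ can be taken as a meridian of $V'$, so $V_1$ is topologically $V$, and its core is isotopic to $cV$ in $\sphere$. For $A_2$, which has one boundary on each of $V, V'$, the disk $\Disk A_2$ goes ``through'' the Seifert structure of $X$; the resulting solid torus $V_2$ and its core knot $cV_2 \subset \sphere$ can be read off from the slope pair $(\tfrac{p}{q}, pq)$ by viewing $X \cup \HK$ as a cable-type construction joining $V$ to $V'$ along annuli of the specified slopes, analogous to the use of the rational tangle diagram in the proof of Theorem~\ref{teo:typeM}.

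The dichotomy in the two cases then emerges from the slope pair. When $q = 1$, both slopes equal the integer $p$, so the two sides are indistinguishable: the cores $cV_1$ and $cV_2$ share the same underlying cable geometry and are simultaneously trivial (giving both $A_1, A_2$ of type $3$-$2$ii) or simultaneously nontrivial (giving both of type $3$-$2$i). When $q > 1$, the asymmetric slopes $\tfrac{p}{q} \ne pq$ force the two sides to yield distinct knot types: exactly one of $cV_1, cV_2$ can be realized as a trivial knot while the other cannot, by an argument using Lemma~\ref{lm:slope} applied to the non-integral side, very much in the style of the $\mathfrak{o}_\alpha, \mathfrak{o}_\beta$ computation in the proof of Theorem~\ref{teo:fourone_classification}.

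The main obstacle I expect is the explicit identification of $\Disk A_2$ and the core $cV_2$ in the asymmetric case $q > 1$: unlike $\Disk A_1$, which is a standard meridian, $\Disk A_2$ is a primitive disk threading the Seifert piece, and verifying that the non-integral slope $\tfrac{p}{q}$ obstructs one of $cV_1, cV_2$ from being trivial while the integral slope $pq$ allows the other to be trivial will require careful bookkeeping of how Dehn filling on a solid torus of slope $\tfrac{p}{q}$ interacts with the handlebody structure $V \cup_D V'$. This is the step where the figures Fig.~\ref{fig:typeS_1} and Fig.~\ref{fig:typeS_2} should play the decisive role.
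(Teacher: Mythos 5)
Your overall strategy---reduce to Lemma \ref{lm:cri_threetwo_ii} and decide each annulus by whether the core of the associated solid torus is unknotted---is the same as the paper's, and your treatment of $A_1$ is essentially correct. But there are two genuine problems. First, you misidentify $A_2$: it does \emph{not} have one boundary component on each of the two components $B,B'$ of $\partial_b X$ (your $B_1,B_2$). If it did, the separating disk $D_a\subset\HK$ dual to the characteristic annuli $A,A'$ would separate the two components of $\partial A_2$, which is exactly the defining condition for type $3$-$3$---contradicting your own (correct) claim that $A_2$ is of type $3$-$2$; moreover, any vertical annulus over an arc of $\sdisk$ joining the two components of $\pi(\partial_b X)$ is $\front$-parallel to $A$ or $A'$, hence characteristic. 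In the paper the two non-characteristic annuli are the frontiers of regular neighborhoods of $A\cup B'\cup A'$ and $A\cup B\cup A'$ in $X$, so that $\partial A_1\subset\partial V$ and $\partial A_2\subset\partial V'$, each with \emph{both} components on one side of $D_a$. Consequently $\Disk{A_2}$ is simply a meridian of $V$, not a ``primitive disk threading the Seifert piece,'' and the two cores to be tested are just $cV$ and $cV'$.

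Second, the case $q>1$ is where the real content lies, and your proposal does not supply it. Asymmetry of the slope pair $(\frac{p}{q},pq)$ does not by itself imply that exactly one of the two cores is unknotted, and Lemma \ref{lm:slope} is not the right tool: it concerns double branched covers of rational tangles in the Eudave-Mu\~noz construction, and there is no branched covering in the type $\Se$ setting. The paper's argument is Seifert-fibered: when $q>1$ the union $M=V\cup A\cup A'\cup X$ is Seifert fibered over a disk with two exceptional fibers (one coming from $X$, the other being $cV$ because $p\neq 0,\pm 1$), and the classification of Seifert fibrations of $\sphere$ then forces $(\sphere,cV)$ to be trivial and $(\sphere,cV')$ to be a $(p,q)$-torus knot, which is nontrivial; this is the content of Fig.\ \ref{fig:typeS_2}. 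Likewise, for $q=1$ the paper does not argue from ``indistinguishability of slopes'' in the abstract but from an explicit symmetry (Fig.\ \ref{fig:typeS_1}) exhibiting $(\sphere,cV)$ and $(\sphere,cV')$ as equivalent knots. So the skeleton of your plan is right, but the geometric setup for $A_2$ must be corrected and the decisive step for $q>1$ is missing.
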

\begin{proof}
Let $X$ be the Seifiert solid torus cut off by 
$A,A'$ from $\Compl\HK$, and 
$B,B'$ the two annuli in $\overline{\partial X-(A \cup A')}$. Since $\partial A$, $\partial A'$ 
are parallel in $\partial\HK$, and $A,A'$ 
are of type $3$-$3$, there exists an essential separating disk $D_a\subset\HK$ disjoint from $B,B'$ and separating them. 
Denote by $V,V'\subset \HK$ the two solid tori cut off by $D_a$ containing $B,B'$, respectively; it may be assumed that $\partial A \cap V$ (resp.\ $\partial A \cap V'$) has a slope of $\frac{p}{q}$ 
(resp.\ $pq$) with respect to $(\sphere,V)$ (resp.\ 
$(\sphere,V')$).

Observe now that the two non-characteristic annuli $A_1,A_2$ can be identified with 
the frontier of a regular neighborhood of $A \cup B'\cup A'$ in $X$ and $A\cup B\cup A'$, respectively.
In particular, we have $\partial A_1\subset V$ and $\partial A_2\subset V'$. Thus, by Lemma \ref{lm:cri_threetwo_ii}, 
to determine the types of $A_1,A_2$, it amounts to check whether  
$(\sphere,cV),(\sphere,cV')$ are trivial, respectively. 

In the case $q=1$, the two knots $(\sphere,cV),(\sphere, cV')$ are equivalent (see Fig.\ \ref{fig:typeS_1}), and hence the first assertion. In the case $q>1$, the union $M:=V\cup A \cup A'\cup X$ is a Seifert bundle with two exceptional fiber since $p\neq 0,\pm 1$;
hence by the classification of Seifiert structure on $\sphere$
\cite{Sei:33}, $(\sphere,cV)$ is trivial and $(\sphere,cV')$ is a $(p,q)$-torus knot (see Fig.\ \ref{fig:typeS_2}); this implies the second claim. 
\end{proof}


\subsection{Arcs in a 4-punctured sphere}\label{subsec:arcs_punctured_sphere}
We collect results on arcs in a $4$-punctured sphere needed in our investigation on type $\Kl$ handlebody-knots.   
Let $P$ be an oriented $4$-punctured sphere $P$, and $\Ce,\Co$ two components  of $\partial P$ with the induced orientation, and $e\in \Ce, o\in\Co$. In the following, we define a coordinate for each oriented arc $\gamma$ from $e$ to $o$. 

We use the convention: given an oriented surface $S$ and oriented arcs $k,l\subset S$,
and a point $b\in k\cap l$; if the orientation $[k,l]_b$ 
induced by $k,l$ at $b$ coincides with 
(resp.\ differs from) the orientation of 
$S$, then we write $[k,l]_b=+1$ (resp.\ $-1$), and say $k,l$ are positively (resp.\ negatively) 
oriented at $b$. We denote by $\algint(k,l)$ 
the algebraic intersection number $\sum\limits_{b\in k\cap l}[k,l]_b$ of $k,l$, and set $\geoint(k,l):=\vert k\cap l\vert$.

\subsubsection{Coordinate system}  
Denote by $\Ce',\Co'\subset \partial P$ the two components other than $\Ce,\Co$, and 
choose an oriented arc $d_e$ (resp.\ $d_o$) going from $\Ce'$ to $\Ce$ (resp.\ $\Co'$ to $\Co$), and an oriented arc 
$s_0$ going from $\Ce$ to $\Co$. 
The the triplet $\mathfrak{C}:=\{d_e,d_o,s_0\}$ is called
a \emph{coordinate system} for $\{P,\Ce,\Co\}$.
Note that $d_e,d_o,s_0$ determines, up to isotopy without fixing endpoints, an unique arc $s_0'$ going from $\Ce'$ to $\Co'$ and disjoint from 
$d_e\cup d_o\cup s_0$. 

\subsubsection{Slope of arcs}
Consider the integrally-punctured plane 
\[\pR:=\mathbb{R}^2-\smashoperator{\bigcup_{\bm\in\mathbb{Z}^2}} \mathring{\geodisk}_\epsilon(\bm),\] 
where $\geodisk_\epsilon(\bm)$ is the disk of radius $\epsilon\ll\frac{1}{2}$ with center at $\bm$.
Let $\bt$ be the generator of $\mathbb{Z}_2$, 
and consider the map from 
$\psi$ from $\mathbb{Z}_2$ to the homeomorphism group
$\homeo{\pR}$ given by 
\begin{align*}
\psi_i:=\psi(\bt^i):\pR &\longrightarrow \pR\\
\bv &\mapsto (-1)^i\bv,
\end{align*}
where $i=0,1$.
The map $\psi$ restricts to a homomorphism 
$\phi:\mathbb{Z}_2\rightarrow \Aut\Ztwo$ which gives us the semiproduct $\Ztwo\rtimes_\phi \mathbb{Z}_2$ and the group action of $\Ztwo\rtimes_\phi \mathbb{Z}_2$ on $\pR$  
\begin{align*}
\Ztwo\rtimes_\phi \Ztwo &\times \pR \rightarrow \pR\\
(\bm \bt^i&, \bv)\longmapsto 2\bm+\psi_i (\bv).
\end{align*}
%
%
The quotient space of $\pR$ by the group action is homeomorphic to $P$. Denote by $\geodisk_\epsilon(i,j)$ the disk with center at $\bm=(i,j)\in\Ztwo$. 
Then we can choose an orientation-preserving covering map $\pi_\circ:\pR\rightarrow P$ so that  
\begin{align*}
\tilde \Ce:=\pi_\circ^{-1}(\Ce)
=\smashoperator{\bigcup_{i,j\ \text{even}}}\partial \geodisk_\epsilon(i,j),
&\quad
\tilde C'_e:=\pi_\circ^{-1}(\Ce')
=\smashoperator{\bigcup_{\substack{
i\ \text{even},\\
j\ \text{odd}
}
}}
\partial \geodisk_\epsilon(i,j),\\
\tilde \Co:=\pi_\circ^{-1}(\Co)
=\smashoperator{\bigcup_{\substack{
i\ \text{odd},\\
j\ \text{even}
}
}}\partial \geodisk_\epsilon(i,j),
&\quad
\tilde C'_o:=\pi_\circ^{-1}(\Co')
=\smashoperator{\bigcup_{i,j\ \text{odd}}}
\partial \geodisk_\epsilon(i,j), 
\end{align*}   
\begin{equation*}
\tilde e:=\picirc^{-1}(e)
=\{(i\pm\epsilon,j)\mid i,j\ \text{even}\} \text{ and } 
\tilde o:=\picirc^{-1}(o)
=\{(i\pm\epsilon,j)\mid i\ \text{odd}, j\ \text{even}\}.
\end{equation*}
Further, it may be assumed that $\tilde d_e:=\picirc^{-1}(d_e)$ (resp.\ $\tilde d_o:= \picirc^{-1}(d_o)$) is the segment going from $(i,j\pm\epsilon)$
to $(i,j\pm (1-\epsilon))$ with $j$ odd and $i$ even (resp.\ $i$ odd) integers, and $\tilde{s}_0:=\picirc^{-1}(s_0)$ (resp.\ $\tilde{s}_0':=\picirc^{-1}(s_0')$) is the segment going from 
$(i\pm\epsilon,j)$ to $(i\pm (1-\epsilon),j)$ with $i$ even and $j$ even (resp.\ $j$ odd) integers; see Fig.\ \ref{fig:punctured_plane_fund}.  

Observe that, given an oriented arc $\alpha$ 
from $e$ to $o$,
if the lifting of $\alpha$ based at 
$(i+\epsilon,j)$ in $\pR$ goes to $(i\pm\epsilon+t_x,j+t_y)$ for some $(i,j)\in\Ztwo$, then the same holds for every $(i,j)\in\Ztwo$, and moreover, 
the lifting of $\alpha$ based at 
$(i-\epsilon,j)$ in $\pR$ goes to $(i\mp\epsilon-t_x, j-t_y)$, for every $(i,j)\in\Ztwo$. Therefore, the rational number $\frac{t_y}{t_x}$ is independent of the choice of lifting, and we call it the \emph{slope} of 
$\alpha$, with respect to $\mathfrak{C}$; see Fig.\ \ref{fig:punctured_plane_fund} for an lifting of an arc of slope $\frac{2}{3}$. 

Note that $t_y$ is always even and $t_x$ odd, and two arcs 
of the same slope differ by 
some Dehn twists along $\Ce, \Co$. Denote by $\eoQ$ 
the set of rational numbers with even numerator and odd denominator.

\begin{figure}[b]
\centering
\begin{subfigure}[t]{.5\linewidth}
\centering
\includegraphics[scale=.337]{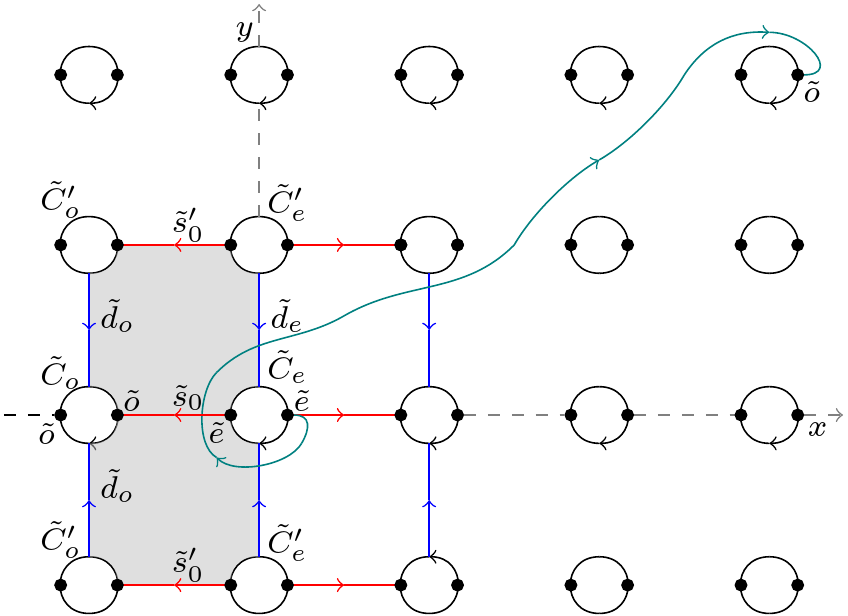}
\caption{Fundamental domain and liftings in $\pR$.}
\label{fig:punctured_plane_fund}
\end{subfigure}
\hspace*{.25cm}
\begin{subfigure}[t]{.46\linewidth}
\centering
\includegraphics[scale=.329]{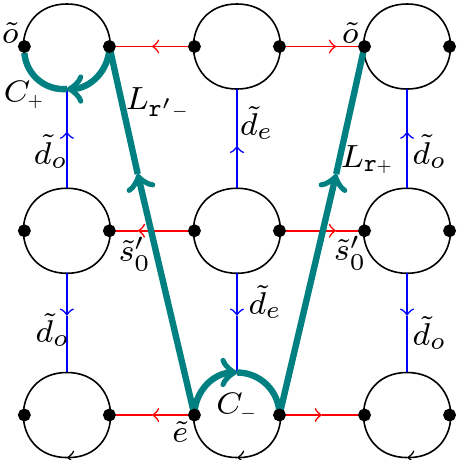}
\caption{$C_\plusminus$; $L_{\slope\plus},\slope>0$; $L_{\slope'\minus}, \slope'<0$.}
\label{fig:line_circle_model}
\end{subfigure}
\end{figure}

\begin{figure}[b]
\begin{subfigure}[t]{.32\linewidth}
\centering
\includegraphics[scale=.18]{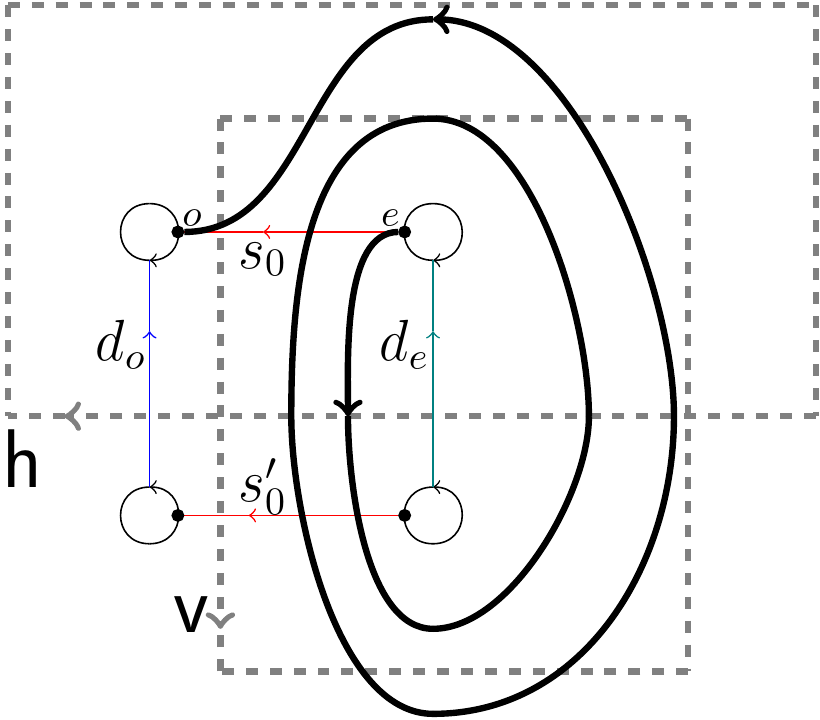}
\caption{$\rho=2,\beta=0$.}
\label{fig:null_beta}
\end{subfigure}
\begin{subfigure}[t]{.32\linewidth}
\centering
\includegraphics[scale=.18]
{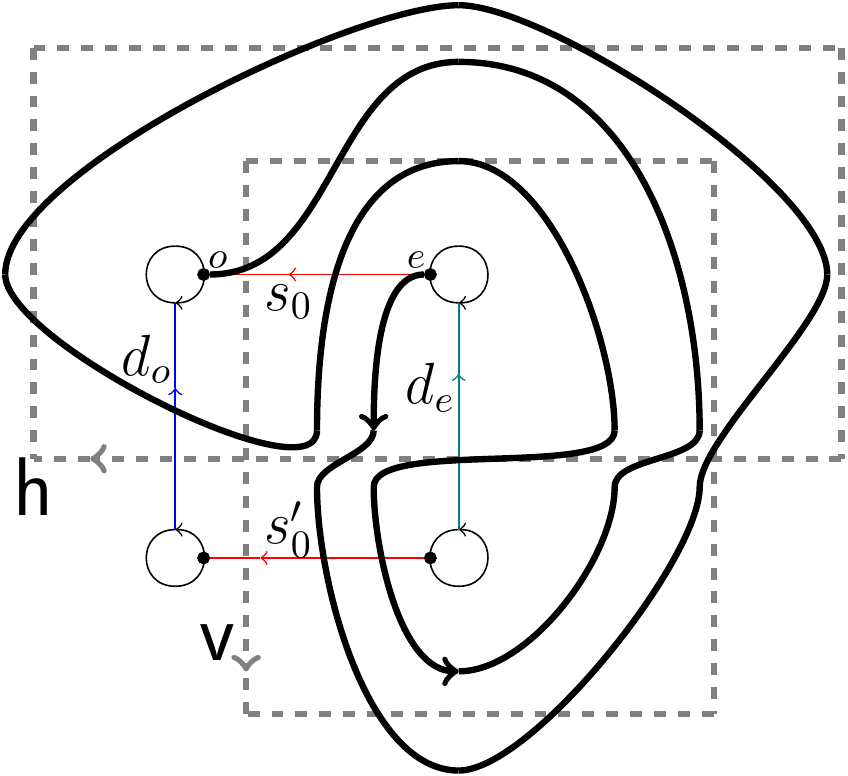}
\caption{$\rho=2,\beta=1$.}
\label{fig:positive_beta}
\end{subfigure}
\begin{subfigure}[t]{.32\linewidth}
\centering
\includegraphics[scale=.18]
{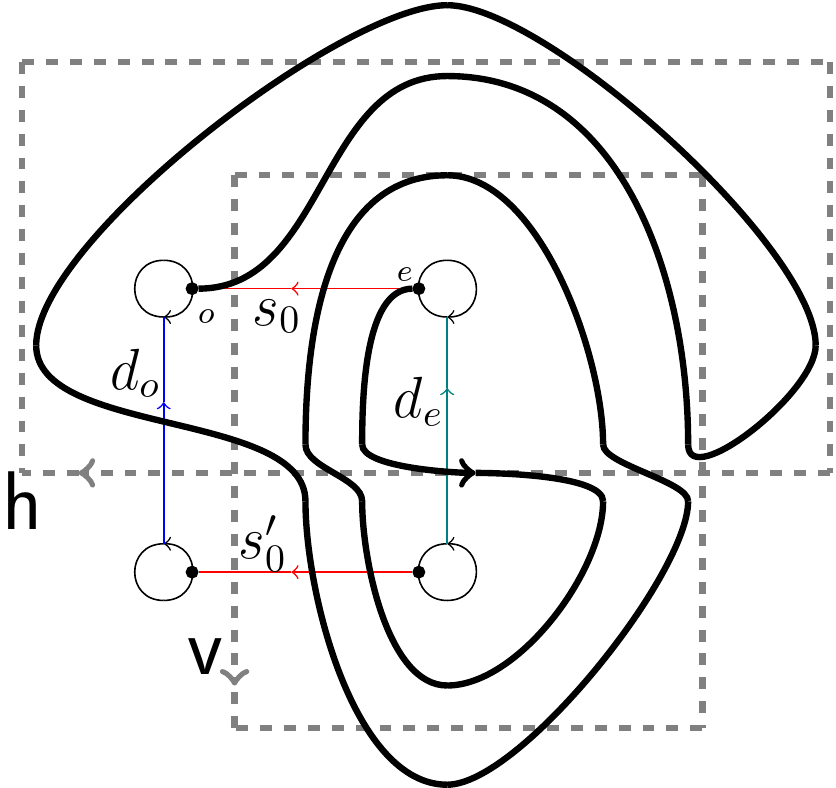}
\caption{$\rho=2,\beta=-1$.}
\label{fig:negative_beta}
\end{subfigure}
\caption{Arcs in $P$ and their lifts in $\pR$.}
\end{figure}

\subsubsection{Coordinate of arcs}\label{subsubsec:coor}  
For each $\slope\in\eoQ$, we construct 
a reference arc $s_\slope$ from $e$ to $o$
of slope $\slope$ with respect to 
$\mathfrak{C}$ as follows:
Let $\ver\subset P$ (resp.\ $\hor\subset P$) be an oriented essential circles disjoint from $d_e\cup d_o$ (resp.\ $s_0\cup s_0'$) that meets $s_0, s_0'$ (resp.\ $d_e,d_o$) each at one point with $[s,\ver]=1$, $s=s_0$ or $s_0'$ (resp.\ 
$[d,\hor]=1$, $d=d_e$ or $d_0$).

Consider first the case $\slope=2\rho\geq 0$. 
Then $s_\slope$ is given by twisting $s_0$ along $v$ $\rho$ times (see Fig.\ \ref{fig:null_beta}).
For the general case $\slope=\frac{2\rho}{2\beta+1}$ with $\rho,\beta\in \mathbb{Z}$ and $\rho\geq 0$, we construct $s_\slope$ 
as follows: Identify $P$ 
with the $2$-sphere $x^2+y^2+z^2=1$ in $\mathbb{R}^3$ with the interior of four disjoint small geodesic disks 
centered at 
$\frac{1}{\sqrt{2}}(0,\pm 1, \pm 1)$ 
removed. It may be assumed that $e,o$ 
is in the boundary of the geodesic disks 
centered at $\frac{1}{\sqrt{2}}(0,1,1), \frac{1}{\sqrt{2}}(0,-1,1)$, respectively.
Identify the loop $\hor$ with 
the equator $\{(x,y,0)\mid x^2+y^2=1\}$, and set $\bar \slope:=2\rho$. Then it may be assumed that $s_{\bar \slope}$ meets $\hor$ 
at the $2\rho$ points: $\left(\sin(\frac{\pi}{2\rho}+\frac{\pi}{\rho}i),\cos(\frac{\pi}{2\rho}+\frac{\pi}{\rho}i), 0\right)$, $i=0,\dots,2\rho-1$. 
Twisting $s_{\bar\slope}$ along $\hor$ by 
an angle of $\frac{\beta\pi}{\rho}$,
we obtain $s_\slope$; see Figs.\ \ref{fig:positive_beta}, \ref{fig:negative_beta}.

Now, given an oriented arc $\gamma$ from $e$ to $o$ with slope $\slope$, 
then, up to isotopy, 
it can be obtained by twisting 
$s_\slope$ along $C_e$ $\lambda$
times and $C_o$ $\mu$ times, for some 
$\lambda,\mu\in\mathbb{Z}$. 
The triplet $(\slope, \lambda, \mu)$ 
is then called the \emph{coordinate} of $\gamma$ with respect to $\mathfrak{C}$; 
see Fig.\ \ref{fig:punctured_plane_fund} for a lifting of an arc with coordinate $(\frac{2}{3},2,1)$.

\subsubsection{Generating loops} 
In order to decompose arcs in $P$, 
here we regard an oriented arc 
as an image of some path, and use the
same notation to denote both the arc and path. 
Let $\gamma$ be an arc from $e$ to $o$ with coordinate $(\slope,\lambda,\mu)$, where 
$\slope=\frac{2\rho}{2\beta+1}$
for some $\rho,\beta\in\mathbb{Z}$ with 
$\rho\geq 0$ and $2\rho, 2\beta+1$ relatively prime when $\slope\neq 0$. 
Consider the loop $\hat C_o:=s_0 C_o s_0^{-1}$ and the loop $\hat \ver$ 
defined as the loop in $\ver \cup s_0$ 
based at $e$. Then the homotopy classes of $C_e,\hat C_o,\hat v$ 
generate $\pi_1(P,e)$, so every loop based at $e$ is homotopic, relative to $e$, to 
a product of finitely many copies of $C_e$, $\hat C_o$ and $\hat \ver$. 

To write down the product precisely, we first note that by the definition of the coordinate, 
$\gamma\simeq C_e^\lambda s_\slope C_o^\mu\simeq  C_e^\lambda s_\slope s_0^{-1} \hat{C}_o^\mu s_0$.
Secondly, observe that $d_e,d_o,s_0'$ 
are dual to $C_e,\hat C_o,\hat \ver$.
Consider now the covering space $\pR\rightarrow P$, 
and note that $\tilde{e}$ (resp.\ $\tilde{o}$) splits the boundary component 
$\partial\geodisk(i,j)$, $i,j$ even (resp.\ $i$ odd, $j$ even) into 
the upper semicircle $C_\minus(i,j)$ and 
the lower semicircle $C_\plus(i,j)$ 
oriented so that $C_\plusminus(i,j)$ starts from $(i\pm \epsilon,j)$. 
Denote by $L_{\slope\plusminus}(i,j)$ the straight line starting from $(i\pm \epsilon,j)$ to $(i\pm (2\beta+1)\mp\epsilon,j\pm 2\rho)$ (resp.\ to
$(i\mp (2\beta+1)\mp\epsilon,j\mp 2\rho)$) if $\beta>0$ (resp.\ if $\beta<0$). 
The pair $(i,j)$ in $L_{\slope\plusminus}(i,j), C_\plusminus(i,j)$ 
is often suppressed for the sake of simplicity.

Then the lifting of 
$s_\slope$ in $\pR$ starting 
at $(i\pm\epsilon,j)$
is homotopic, with endpoints fixed, to $L_{\slope\plusminus}(i,j)$ when $\beta\geq 0$ and to 
$C_\minusplus^{-1}(i,j)L_{\slope\minusplus}(i,j) C_\plusminus(i,j)$
when $\beta<0$ (see Fig.\ \ref{fig:line_circle_model}). 
In particular, $s_\slope$ meets $d_o$ and $d_e$ each $\beta$ times and in an alternating manner, and 
points in $\gamma\cap s_0'$ 
are interpolated in between points in $\gamma\cap d_o$ and points $\gamma \cap d_e$. 
This leads us to the following.

\subsubsection{Alternating functions} 
A \emph{unit sequence} is a finite sequence with each term either $1$ or $-1$.
Let $\tau$ be a non-negative integer.
Then a \emph{paired unit sequence} $A_\tau$ is a unit sequence $\{\upsilon_i\}_{i=1}^{2\tau}$
of length $2\tau$.
Given a paired unit sequence $A_\tau$, the \emph{induced alternating function} $\mathcal{A}_\tau^G$ on an $H$-group $G$ is 
the function $\mathcal{A}_{\tau}^G:G\times G\rightarrow G$ 
given by
\[\mathcal{A}_{\tau}(x,y)= x^{\upsilon_1} y^{\upsilon_2}
\dots x^{\upsilon_{2\tau-1}}y^{\upsilon_{2\tau}},
\]
where $x_1\dots x_t:=(\cdots((x_1x_2)x_3)\cdots x_t)$ with juxtaposition denotes the multiplication in $G$.

\cout{
Given a paired unit sequence $A_\tau=\{\upsilon_i\}_{i=1}^{2\tau}$, an extension $\hat A_\tau$ of $A_\tau$ is a unit sequence 
that contains $A_\tau$ as a subsequence. 
More precisely, if $\hat A_\tau=\{\epsilon_i\}_{i=1}^\sigma$, then 
$A_\tau=\{\upsilon_i=\epsilon_{\kappa(i)}\}_{i=1}^{2\tau}$, for some strictly increasing function $\kappa:\{1,\dots,2\tau\}\rightarrow \{1,\dots,\sigma\}$;
note that $\kappa$ is part of the definition of $\hat{A}_\tau$.
}
Given a paired unit sequence $A_\tau=\{\upsilon_i\}_{i=1}^{2\tau}$, an extension $(\hat A_\tau,\kappa)$ of $A_\tau$ consists of a unit sequence
$\hat A_\tau=\{\epsilon_i\}_{i=1}^\sigma$, $\sigma\geq 2\tau$, 
and a strictly increasing function $\kappa:\{1,\dots,2\tau\}\rightarrow \{1,\dots,\sigma\}$ such that
$A_\tau=\{\upsilon_i=\epsilon_{\kappa(i)}\}_{i=1}^{2\tau}$. For the sake of simplicity, $\kappa$ is often dropped from the notation $(\hat A_\tau,\kappa)$. 
 
Set $\kappa(0)=0,\kappa(2\tau+1)=\sigma+1$. Then for any $H$-group $G$, $\hat{A}_\tau$ induces a function, called 
the induced \emph{interpolating function}, $\mathcal{\hat A}_\tau^G$: $G\times G\times G\rightarrow G$ given by 
\[\mathcal{\hat A}_{\tau}(x,y,z)= z^{\zeta_0}x^{\upsilon_1}z^{\zeta_1} y^{\upsilon_2}
\dots z^{\zeta_{2\tau-2}}x^{\upsilon_{2\tau-1}}z^{\zeta_{2\tau-1}}y^{\upsilon_{2\tau}}z^{\zeta_{2\tau}},\]
where $\zeta_i=\smashoperator{\sum\limits_{ j=\kappa(i)+1}^{\kappa(i+1)-1}}\epsilon_j$ 
if $\kappa(i+1)>\kappa(i)+1$, and $\zeta_i=0$ otherwise, $i=0,\dots,2\tau$. 
Particularly, when $z$ is the identity $1_G$, we have $\mathcal{\hat A}_\tau^G(-,-,1_G)$
is homotopic to $\mathcal{A}_\tau^G(-,-)$. We drop $G$ from $\mathcal{A}_\tau^G,\mathcal{\hat A}_\tau^G$ when it is clear from the context; main examples of $G$ here are loop spaces and fundamental groups with discrete topology.

\subsubsection{Decomposition}
Given an oriented arc $\gamma$ from $e$ to $o$ with coordinate $(\slope,\lambda,\mu)$ where $\slope=\frac{2\rho}{2\beta+1}$, $\rho,\beta\in\mathbb{Z},\rho\geq 0$, and $2\rho,2\beta+1$ are relatively prime when $\slope\neq 0$. 
\begin{lemma}\label{lm:arc_decomposition}
The oriented arc $\gamma$ induces a paired
unit sequence $A_{\vert\beta\vert}$
and an extension $\hat{A}_{\vert\beta\vert}$ of $A_{\vert\beta\vert}$
such that 
\begin{align*}
\gamma\simeq C_e^\lambda \mathcal{\hat A}_\beta(\hat C_o, C_e, \hat \ver)\hat C_o^\mu s_0& \quad\beta\geq 0\\
\gamma\simeq C_e^\lambda \mathcal{\hat A}_{-\beta}(C_e, \hat C_o, \hat \ver)\hat C_o^\mu s_0&\quad \beta<0.
\end{align*} 
In addition, when $\beta<0$, the first and last terms of $\hat A_{-\beta}$ are in $A_{-\beta}$, and they are $-1,1$, respectively. Furthermore, 
$\mathcal{\hat A}_0(\hat C_0, C_e,\hat \ver)=\hat \ver^\rho$ and 
$\mathcal{\hat A}_{-1}(C_e,\hat C_0,\hat \ver)=C_e^{-1}\hat \ver^{-\rho} \hat C_o$. 
\end{lemma}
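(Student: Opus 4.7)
The plan is to lift $s_\slope$ to the covering $\picirc\colon\pR\to P$ and to read off a word in $\pi_1(P,e)$ by tracking, in order along the lift, the transverse intersections with the dual arcs $\tilde d_e$, $\tilde d_o$, $\tilde s_0'$. By the construction in Section~\ref{subsubsec:coor}, the three pairs $(d_e, C_e)$, $(d_o, \hat C_o)$, $(s_0', \hat v)$ are Poincar\'e-dual with pairings $\pm 1$ and all other pairings vanish, so the homotopy class of any based loop at $e$ is recovered by multiplying the appropriate generator with the appropriate sign at each such crossing. As a preliminary reduction,
\[
\gamma \;\simeq\; C_e^\lambda s_\slope C_o^\mu \;\simeq\; C_e^\lambda (s_\slope s_0^{-1})\,\hat C_o^\mu\, s_0,
\]
so it suffices to express the loop $s_\slope s_0^{-1}$, based at $e$, in the claimed form.

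For $\beta\ge 0$, the lift of $s_\slope$ starting at $(\epsilon,0)$ is homotopic, endpoints fixed, to the straight segment $L_{\slope+}(0,0)$ from $(\epsilon,0)$ to $(2\beta+1-\epsilon,\,2\rho)$. Since $\gcd(2\rho,\,2\beta+1)=1$ whenever $\slope\ne 0$, the segment avoids every disk $\geodisk_\epsilon(\bm)$ and crosses the integer lines $x=1,\dots,2\beta$ and $y=1,\dots,2\rho-1$ transversely. The $2\beta$ vertical crossings occur in the order $x=1,2,\dots,2\beta$ and alternate between $\tilde d_o$ (odd $x$) and $\tilde d_e$ (even $x$), producing the alternating sequence of $\hat C_o^{\pm 1}$'s and $C_e^{\pm 1}$'s that defines the paired unit sequence $A_\beta$. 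Interleaved among these are the $\rho$ crossings with $\tilde s_0'$ at odd $y$, each contributing $\hat v^{\pm 1}$; their positions relative to the vertical crossings are dictated by the slope and yield the extension $\hat A_\beta$. The $\rho-1$ crossings at even $y$ lie on lifts of $s_0$, which is dual to no generator, so they contribute nothing (equivalently, they are cancelled by the trailing $s_0^{-1}$). Reading the crossings in order gives $s_\slope s_0^{-1}\simeq \mathcal{\hat A}_\beta(\hat C_o, C_e, \hat v)$.

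For $\beta<0$, the lift decomposes as $C_-^{-1}(0,0)\,L_{\slope-}(0,0)\,C_+(i',j')$, where $(i',j')=(2\beta+1,\,2\rho)$ is the $\Co$-lattice point at which $L_{\slope-}$ terminates. Each semicircular cap crosses exactly one dual arc: $C_-^{-1}(0,0)$ crosses the unique $\tilde d_e$-arc attached at $(0,\epsilon)$, and $C_+(i',j')$ crosses the unique $\tilde d_o$-arc attached at $(i',j'-\epsilon)$. A direct sign computation (using the chosen orientations of $\pR$, of $C_e, \hat C_o$, and of $\ver$) shows that these two forced contributions are respectively $C_e^{-1}$ and $\hat C_o^{+1}$; both belong to $A_{-\beta}$ rather than to the interpolating part, which accounts for the claim that $\upsilon_1=-1$ and $\upsilon_{2|\beta|}=+1$. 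The interior of $L_{\slope-}$ supplies the remaining alternating vertical crossings (in the order $C_e, \hat C_o$ because the line now travels in the negative $x$-direction) together with the $\rho$ interpolating $\tilde s_0'$-crossings, exactly as in the $\beta\ge 0$ case.

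The two explicit small cases drop out immediately from this description: when $\beta=0$ there are no vertical crossings and the $\rho$ crossings with $\tilde s_0'$ combine (with the standard sign check using $[\hat v,s_0']=-1$) to give $\hat v^{\rho}$; when $\beta=-1$ the interior of $L_{\slope-}(0,0)$ has no vertical crossings, so only the two endpoint caps contribute vertical generators and the $\rho$ interpolating $\tilde s_0'$-crossings assemble into $\hat v^{-\rho}$, yielding $C_e^{-1}\hat v^{-\rho}\hat C_o$. The main technical obstacle is the sign bookkeeping in the $\beta<0$ case: one must verify carefully that the semicircular caps $C_\pm$ contribute exactly $C_e^{-1}$ and $\hat C_o^{+1}$, inject no spurious $\hat v^{\pm 1}$'s (i.e., that $C_\pm$ is disjoint from every lift of $s_0'$), and align with the interior crossings of $L_{\slope-}$ so that the forced endpoint signs $-1,+1$ are compatible with the alternating signs of the interior paired unit structure.
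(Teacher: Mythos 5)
Your proof is correct and follows essentially the same route as the paper's: both lift $s_\slope$ to $\pR$, replace the lift by $L_{\slope+}$ (resp.\ by $C_-^{-1}L_{\slope-}C_+$ when $\beta<0$), read off the word from the ordered, signed intersections with the dual arcs $\tilde d_e,\tilde d_o,\tilde s_0'$ while discarding the $\tilde s_0$-crossings, and obtain the endpoint terms $-1,+1$ and the special cases $\beta=0,-1$ from the two semicircular caps and from $L_{\slope\pm}$ missing $d_e\cup d_o$. (One slip of phrasing only: for $\beta<0$ the \emph{interior} crossings of $L_{\slope-}$ begin with $\tilde d_o$ at $x=-1$, and it is only after prepending the $C_e^{-1}$ coming from the cap that the full word alternates starting with $C_e$ --- but your explicit cap bookkeeping already yields the correct word.)
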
  
Recall first $\gamma\simeq \Ce^\lambda s_\slope \Co^\mu$
and the lifting $\tilde s_{\slope\plus}$ 
of $s_\slope$ 
starting at $(i+\epsilon,j)$ is homotopic, with endpoints fixed, to 
$L_{\slope\plus}$ when $\beta\geq 0$ and to $C_\minus^{-1}
L_{\slope\minus}
C_\plus$
when $\beta<0$; therefore $s_\slope$ meets $d_e\cup d_o$ at $2\vert\beta\vert$ points: $x_1,\dots, x_{2\vert\beta\vert}$, and meets $d_e\cup d_o\cup s_0'$ 
at $\sigma:=2\vert \beta\vert+\rho$ points: $y_1,\dots, y_{\sigma}$. 
It may be assumed that they are ordered consecutively along $s_\slope$. In particular, 
there is an order-preserving injective function 
$\kappa:\{1,\dots,2\vert\beta\vert\}\rightarrow \{1,\dots, \sigma\}$ such that $x_i=y_{\kappa(i)}$, $i=1,\dots,2\vert\beta\vert$.

Now observe that if $x_i\in d_o$, then $x_{i+1}\in d_e$; 
similarly, if $x_i\in d_e$, then $x_{i+1}\in d_o$, for every $i<\sigma$; furthermore, $x_1\in d_o$ (resp.\ $x_1\in d_e$) if $\beta>0$ (resp.\ if $\beta<0$). 
In the case $\beta>0$ (resp.\ $\beta<0$), we define $\upsilon_i$ to be $[d_o,s_\slope ]_{x_i}$ 
when $i$ is odd (resp.\ even), and $[d_e,s_\slope]_{x_i}$ 
when $i$ is even (resp.\ odd). Thus $\gamma$, which determines $s_\slope$,
determines a paired unit sequence $A_{\vert\beta\vert}:=\{\upsilon_1,\dots,\upsilon_{2\vert\beta\vert}\}$. 
Similarly, for each $y_i\in u$, $u=d_e, d_o$ or $s_0'$, setting $\epsilon_i:=[u,s_\slope]_{y_i}$, we obtain an extension $\hat A_{\vert\beta\vert}:=\{\epsilon_1,\dots,\epsilon_\sigma\}$ of 
$A_{\vert\beta\vert}$. 
Since $d_e,d_o,s_0'$ are dual to 
$\Ce,\hat\Co, \hat \ver$, the loop 
$s_\slope s_0^{-1}$ is homotopic to  
\begin{align*}
\mathcal{\hat A}_\beta(\hat \Co, \Ce, \hat \ver)& \text{ if } \beta\geq 0\\
\mathcal{\hat A}_{\minus\beta}(\Ce, \hat \Co, \hat \ver) & \text{ if } \beta<0.
\end{align*}
The first assertion then follows from   
\[\gamma\simeq \Ce^\lambda s_\slope \Co^\mu\simeq  \Ce^\lambda s_\slope s_0^{-1}\hat C_o^\mu s_0.\]  
The second assertion can be seen from the fact that $\tilde s_{\slope\plus}
\simeq C_\minus^{-1}L_ {\slope\minus}  C_\plus$ 
when $\beta<0$, and $C_\minus^{-1}$ (resp. $C_\plus$) meets $\tilde d_e$ (resp.\ $\tilde d_o$)
negatively (resp.\ positively). 
For the last claim, we note 
when $\beta=0$ (resp.\ $\beta=-1$), $L_{\slope\plus}$ (resp.\ $L_{\slope\minus}$) does not meet $d_e\cup d_o$ but
meets $s_0'$ positively (negatively) $\rho$ times.
\begin{definition}
We call $A_{\vert\beta\vert},\hat A_{\vert\beta\vert}$ given in the proof of Lemma \ref{lm:arc_decomposition}, the induced paired unit sequence and extension by $\gamma$.
\end{definition}


\subsection{Type $\Kl$}
Let $\pair$ be of type $\Kl$. 
Then, as shown in Section \ref{subsec:typeK}, 
$\Compl\HK$ admits a unique non-separating annulus $A$. It is non-characteristic and its type has been determined.

\begin{lemma}{\cite[Lemma $3.14$]{Wan:22p}}\label{lm:typeK_non_separating}
The non-separating annulus $A\subset\Compl\HK$ is of type $3$-$3$ with the slope pair $(\frac{p}{q},pq)$, $q>0$, $p\neq 0,\pm 1$.
\end{lemma}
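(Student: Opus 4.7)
The plan is to first identify $A$ as a type $3$-$3$ annulus via the Koda--Ozawa classification, then extract the slope pair by analyzing the Seifert fibered solid torus component $V\subset\ComplS$ adjacent to the I-fibered component $X$ over $\pklein$.

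For the type, I would eliminate each other possibility in turn. Since $\pair$ is atoroidal, types $1$ and $3$-$1$ are excluded (both force toroidality). By Lemma~\ref{lm:typetwo}, types $2$-$1$ and $2$-$2$ occur only for relative JSJ-graphs whose underlying JSJ-graph appears in the list there, none of which equals \includegraphics[scale=.18]{100i}, so they do not occur for type $\Kl$ handlebody-knots. For types $3$-$2$ and $4$, the two components of $\partial A$ are parallel in $\partial\HK$ and thus co-bound an annular region $B\subset\partial\HK$; the torus $A\cup B\subset\sphere$ bounds a solid torus $W$, one side of which lies entirely inside $\Compl\HK$ and has $A$ as part of its boundary, exhibiting $A$ as separating. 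This contradicts the non-separation of $A$, so $A$ must be of type $3$-$3$.

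For the slope pair, type $3$-$3$ yields a unique essential separating disk $\Disk{A}\subset\HK$ disjoint from $\partial A$, cutting $\HK$ into two solid tori $V_1, V_2$, with $r_i$ the slope of $\partial A\cap V_i$ in $(\sphere, V_i)$. By \cite[Lemma~$2.12$]{Wan:23}, the pair $(r_1, r_2)$ has form $(\tfrac{p}{q},\tfrac{q}{p})$ with $pq\neq 0$ or $(\tfrac{p}{q}, pq)$ with $q>0$, $p\neq \pm 1$. To distinguish which form occurs, I would use the structural information from Section~\ref{sec:number}: the Seifert fibered solid torus $V$ at the filled-circle node of \includegraphics[scale=.18]{100i} has $\cb V=1$, hence has a unique exceptional fiber of multiplicity $(p,q)$ with $q>1$. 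Tracing the Seifert fiber of $V$ through the characteristic annulus $C$ into $\partial\HK$ and comparing with meridian-longitude frames on $V_1, V_2$ then forces $(r_1, r_2)$ into the second form. The type $3$-$3$ condition that no component of $\partial A$ bounds a disk in $\HK$ rules out $p=0$, while the exceptionality of the fiber forces $q>1$ and $p\neq \pm 1$.

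The main obstacle is the explicit slope computation in the second step: one must track how the Seifert fiber of $V$ and the I-bundle fiber of $X$ meet along the characteristic annulus $C$, and translate this into slope coordinates on $V_1, V_2$ after cutting $\HK$ along $\Disk{A}$. The analysis is in the spirit of Lemma~\ref{lm:slope}, but with the additional subtlety of the orientation-reversing identifications arising from the Klein bottle I-bundle structure of $X$.
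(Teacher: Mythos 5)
The paper itself offers no proof of this statement: it is imported verbatim from \cite[Lemma 3.14]{Wan:22p}, so your proposal has to be judged on its own terms rather than against an argument in the text. Your first step, the identification of the type, is sound: atoroidality excludes types $1$ and $3$-$1$; Lemma~\ref{lm:typetwo} excludes type $2$ because none of the relative JSJ-graphs listed there has underlying JSJ-graph Fig.~\ref{fig:100i}; and for types $3$-$2$ and $4$ the components of $\partial A$ cobound an annulus $B\subset\partial\HK$, so $A\cup B$ is a closed surface in $\sphere$, hence null-homologous mod $2$, and every loop in the interior of $\Compl\HK$ meets $A$ evenly, forcing $A$ to separate. (The solid-torus claim is unnecessary; this homological observation already suffices.)

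The slope computation, however, contains a genuine gap, and the mechanism you propose for closing it is wrong. You plan to identify the parameters $(p,q)$ of the slope pair with the multiplicity of the exceptional fiber of the Seifert fibered solid torus component $V\subset\ComplS$, and you conclude $q>1$. But the lemma asserts only $q>0$, and $q=1$ actually occurs: in the paper's own example in Section~\ref{subsubsec:example} (the mirror of $5_2$), the class $[\hat l_1]$ equals the generator $v$, i.e.\ $q=1$, and the proof of Theorem~\ref{teo:typeK} devotes a separate case to $q\leq 2$. So the slope-pair parameters are not the Seifert invariants of $V$. The underlying issue is that $r_1,r_2$ are slopes of $\partial A$ on the solid tori $V_1,V_2$ obtained by cutting $\HK$ along $\Disk{A}$, measured via the Seifert framings of $cV_1,cV_2$ in $\sphere$; this is extrinsic information about how $\HK$ sits in $\sphere$, not intrinsic data of the fibered pieces of $\Compl\HK$, so \lqq tracing the Seifert fiber through the characteristic annulus'' cannot by itself produce these numbers. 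Since you also explicitly defer the computation that would exclude the form $(\frac{p}{q},\frac{q}{p})$ and establish $p\neq 0,\pm 1$, the proposal proves the type assertion but not the slope assertion.
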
 
Given a separating annulus $A\subset\Compl\HK$, then, since components of $\partial A$ are parallel in $\partial\HK$, the annulus $A$ 
determines an element $\xA$, up to conjugacy and inverse, in $\pi_1(\HK)$, and we have the following algebraic condition for $A$ to be of type $4$-$1$.  
\begin{lemma}\label{lm:cri_fourone}
If there exists no generating pair $\{x,y\}$ 
of $\pi_1(\HK)$ such that $x^n$ is conjugate to $\xA$, for some
$n>0$, then $A$ is of type $4$-$1$. 
\end{lemma}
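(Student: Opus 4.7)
My plan is to prove the contrapositive: if $A$ is not of type $4$-$1$, then there exist a generating pair $\{x,y\}$ of $\pi_1(\HK)$ and an integer $n>0$ such that $x^n$ is conjugate to $\xA$. The strategy is to combine the Koda-Ozawa classification recalled in the excerpt with the atoroidality of $\pair$ and the hypothesis that the components of $\partial A$ are parallel in $\partial\HK$ to narrow down the possible types of $A$ sharply, then extract the generating pair from the non-separating compressing disk that accompanies type $3$-$2$.

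The first step is the case analysis. Types $1$ and $3$-$1$ require toroidality, and type $4$-$2$ does not occur in the atoroidal setting. Type $3$-$3$ is defined to have non-parallel boundary components, hence is excluded by hypothesis. And the parallel-boundary condition forces both components of $\partial A$ to bound disks in $\HK$ or neither (a disk bounded by $l_1$ slides across the annulus of parallelism in $\partial\HK$ to yield one bounded by $l_2$), so the ``exactly one'' clause of type $2$ is incompatible with our hypotheses. This leaves only type $3$-$2$ (subtypes $3$-$2$i or $3$-$2$ii) and type $4$-$1$.

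Assuming $A$ is of type $3$-$2$, there is a non-separating essential disk $D\subset\HK$ disjoint from $\partial A$. Cutting $\HK$ along $D$ yields a solid torus $V$; let $x\in\pi_1(\HK)$ be represented by the core of $V$ and $y\in\pi_1(\HK)$ by a loop meeting $D$ transversely once. Then $\{x,y\}$ freely generates $\pi_1(\HK)\cong F_2$, with $\pi_1(V)=\langle x\rangle$ included as a free factor, and the inclusion $\partial A\subset\partial\HK\cap V\subset V$ places $\xA$ in the image of $\pi_1(V)$. Hence $\xA$ is conjugate to $x^n$ for some $n\in\mathbb{Z}$. We have $n\neq 0$: otherwise each component of $\partial A$ would be null-homotopic in $V$, hence bound a disk in $V\subset\HK$, contradicting the defining property of type $3$-$2$. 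Replacing $x$ by $x^{-1}$ if necessary, we arrange $n>0$, completing the proof. The $\pi_1$-calculation is routine, so the main obstacle really lies in the first step's careful type-exclusion analysis.
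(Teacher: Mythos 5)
Your proof is correct and takes essentially the same route as the paper: reduce to the case that $A$ is of type $3$-$2$ (the paper leaves this type-exclusion step implicit) and then extract the generating pair from the non-separating disk disjoint from $\partial A$. The only cosmetic difference is that the paper obtains the exponent $n>0$ as half the minimal geometric intersection number of $\partial A$ with a meridian disk of the complementary solid torus, whereas you read it off from the inclusion of $\pi_1$ of that solid torus as a free factor; both arguments are sound.
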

\begin{proof}
Suppose otherwise, and $A$ is of type $3$-$2$.
Then by the definition, there exists 
a non-separating disk $\Disk A\subset\HK$ disjoint 
from $\partial A$. Let $V$ be the solid torus $\HK-\openrnbhd{\Disk A}$, and 
$D$ be a meridian disk of $V$ disjoint from 
$\rnbhd{\Disk A}$. Then $\partial A$ 
meets $D$ minimally $2n$ times,
for some $n>0$.
Let $x,y$ be the generating pair 
of $\pi_1(\HK)$ given by simple loops dual to $D, \Disk A$, respectively.
Then $x^n$ is conjugate to $\xA$.
\end{proof}
To avail oneself of Lemma \ref{lm:cri_fourone}, we use a criterion derived from \cite[Lemma $1.1$]{ChoKod:15}. 
Let $\mathbb{Z}\ast\mathbb{Z}$ be the free group of rank $2$. 
An element $x\in\mathbb{Z}\ast\mathbb{Z}$ 
is said to be \emph{primitive} 
if there exists $y\in \mathbb{Z}\ast\mathbb{Z}$ such that $\{x,y\}$ 
is a generating pair.

\begin{lemma}\label{lm:cri_not_power_pimitive}
Let $\{x,y\}$ be a generating pair of $\mathbb{Z}\ast\mathbb{Z}$. Consider the word $w=a^{\epsilon_1}b^{\eta_1}\dots a^{\epsilon_n}b^{\eta_n}$ where $n\geq 1$ and $\epsilon_i,\eta_i$ nonzero, for every $i$.
If $\epsilon_1\neq \epsilon_i$, $\eta_n\neq \eta_j$, for some $i\neq 1$, $j\neq n$, or both $\vert \epsilon_1\vert,\vert \eta_n\vert$ 
are greater than $1$,
then $w$ is not a power of some primitive element in $\mathbb{Z}\ast\mathbb{Z}$.
\end{lemma}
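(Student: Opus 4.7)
The plan is to argue by contradiction.  Assume that $w = v^{k}$ for some primitive $v \in \mathbb{Z}*\mathbb{Z}$ and some integer $k \geq 1$.  Since the generators $a$, $b$ in the given expression of $w$ alternate and all exponents $\epsilon_{i},\eta_{i}$ are nonzero, the word is already cyclically reduced, so the $2n$-tuple $(\epsilon_{1},\eta_{1},\ldots,\epsilon_{n},\eta_{n})$ records the canonical cyclic form of $w$.  The aim is to show that neither (a) nor (b) is compatible with this cyclic form arising as the $k$-th power of a primitive.

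The key input is the Osborne--Zieschang-type normal form for powers of primitives packaged in \cite[Lemma~1.1]{ChoKod:15}: if $v$ is primitive and $w=v^{k}$ is cyclically reduced as above, then the sequence $(\epsilon_{i})$ takes values in a pair of consecutive integers $\{m,m+1\}$ of the same sign, the sequence $(\eta_{i})$ takes values in a pair $\{p,p+1\}$ of the same sign, and each pair of values is distributed through the cyclic word according to a Sturmian-type alternation pattern that is periodic of period $n/k$.  In particular, when either sequence is non-constant, the minority value occupies positions that are uniquely determined up to cyclic rotation, so the first $a$-exponent $\epsilon_{1}$ and the last $b$-exponent $\eta_{n}$ are pinned down by the shape of $v$.

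In case (b), both $\lvert\epsilon_{1}\rvert>1$ and $\lvert\eta_{n}\rvert>1$ imply that the allowed ranges $\{m,m+1\}$ and $\{p,p+1\}$ consist entirely of integers of absolute value at least $1$, with at least one value in each range of absolute value at least $2$.  A short Nielsen-reduction calculation in $F_{2}$ then forbids primitivity: any cyclically reduced word in which every $a$-exponent and every $b$-exponent has absolute value at least $1$ with at least one exponent of each generator of absolute value at least $2$ is strictly Nielsen-longer than any element of the Nielsen orbit of a basis element, so $v$ cannot be primitive.  In case (a), the sequences $(\epsilon_{i})$ and $(\eta_{i})$ are both non-constant, so they each attain both values of their Sturmian range, and the Sturmian constraint forces $\epsilon_{1}$ and $\eta_{n}$ to occupy matched distinguished positions (e.g.\ both initial within a period of $v$), which can then be checked to be incompatible with the hypothesis that $\epsilon_{1}$ differs from some later $\epsilon_{i}$ while $\eta_{n}$ differs from some earlier $\eta_{j}$ simultaneously.

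The main obstacle is extracting the precise Sturmian/boundary statement from \cite[Lemma~1.1]{ChoKod:15} in a form that applies directly to the boundary positions $\epsilon_{1}$ and $\eta_{n}$ in case (a); once this is in hand the contradiction is a routine combinatorial check, and case (b) is an elementary length argument independent of the finer Sturmian structure.
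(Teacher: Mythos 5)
Your case (b) reaches the right conclusion by an unnecessarily heavy route, and your case (a) has a genuine gap. The cited result \cite[Lemma 1.1]{ChoKod:15} is used in the paper not as a positional/Sturmian normal form but simply through the following consequence: a power of a primitive element, written as a cyclically reduced word in the basis $\{x,y\}$, cannot contain both $x$ and $x^{-1}$, cannot contain both $y$ and $y^{-1}$, and cannot contain both an $x$-exponent and a $y$-exponent of absolute value at least $2$. Case (b) is then immediate from the third clause; no Nielsen-length argument is needed, and the claim that such a word is ``strictly Nielsen-longer than any element of the Nielsen orbit of a basis element'' is not something you can invoke without proof --- Nielsen reduction concerns generating tuples rather than single words, and you have not supplied the reduction.

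The real problem is case (a). The hypothesis ``$\epsilon_1\neq\epsilon_i$ for some $i\neq 1$ and $\eta_n\neq\eta_j$ for some $j\neq n$'' says nothing more than that both exponent sequences are non-constant; the positions $1$ and $n$ play no special role. Your plan of pinning down the ``boundary positions'' $\epsilon_1$ and $\eta_n$ inside a Sturmian period of $v$ therefore chases structure that is neither needed nor actually extracted from the cited lemma, and you explicitly leave the decisive step (``which can then be checked to be incompatible\dots'') unproved; it is not clear what check is intended. The paper's argument is a two-line reduction to the third clause above: if $w$ were a power of a primitive, the first two clauses force all the $\epsilon_i$ to share a sign and all the $\eta_j$ to share a sign, and then non-constancy of each sequence produces some $\vert\epsilon_i\vert\geq 2$ and some $\vert\eta_j\vert\geq 2$, violating the third clause. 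Replacing your case (a) with this observation closes the gap.
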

\begin{proof}
By \cite[Lemma $1.1$]{ChoKod:15},
it suffices to show that $w$ satisfies
one of the following: $w$ contains 
both $x,x^{-1}$, or both $y,y^{-1}$, 
or both $x^{\pm 2},y^{\pm 2}$. 
The condition $\vert\epsilon_1\vert,\vert\eta_n\vert$ is greater than $1$ 
clearly satisfies the third case above, 
so we only need to consider 
the condition $\epsilon_1\neq \epsilon_i,\eta_j\neq \eta_n$. 
Suppose $w$ does not have both $x,x^{-1}$ and both $y,y^{-1}$.
Then $\epsilon_1\epsilon_i\geq 2$ and $\eta_j\eta_n\geq 2$, and thus one of $x^{\epsilon_1},x^{\epsilon_i}$ 
contains $x^2$ or $x^{-2}$
and one of $y^{\eta_j},y^{\eta_n}$ 
contains $y^2$ or $y^{-2}$. 
\end{proof} 
 
\begin{figure}[b]
\begin{subfigure}[t]{.5\linewidth}
\centering
\begin{overpic}[scale=.13,percent]
{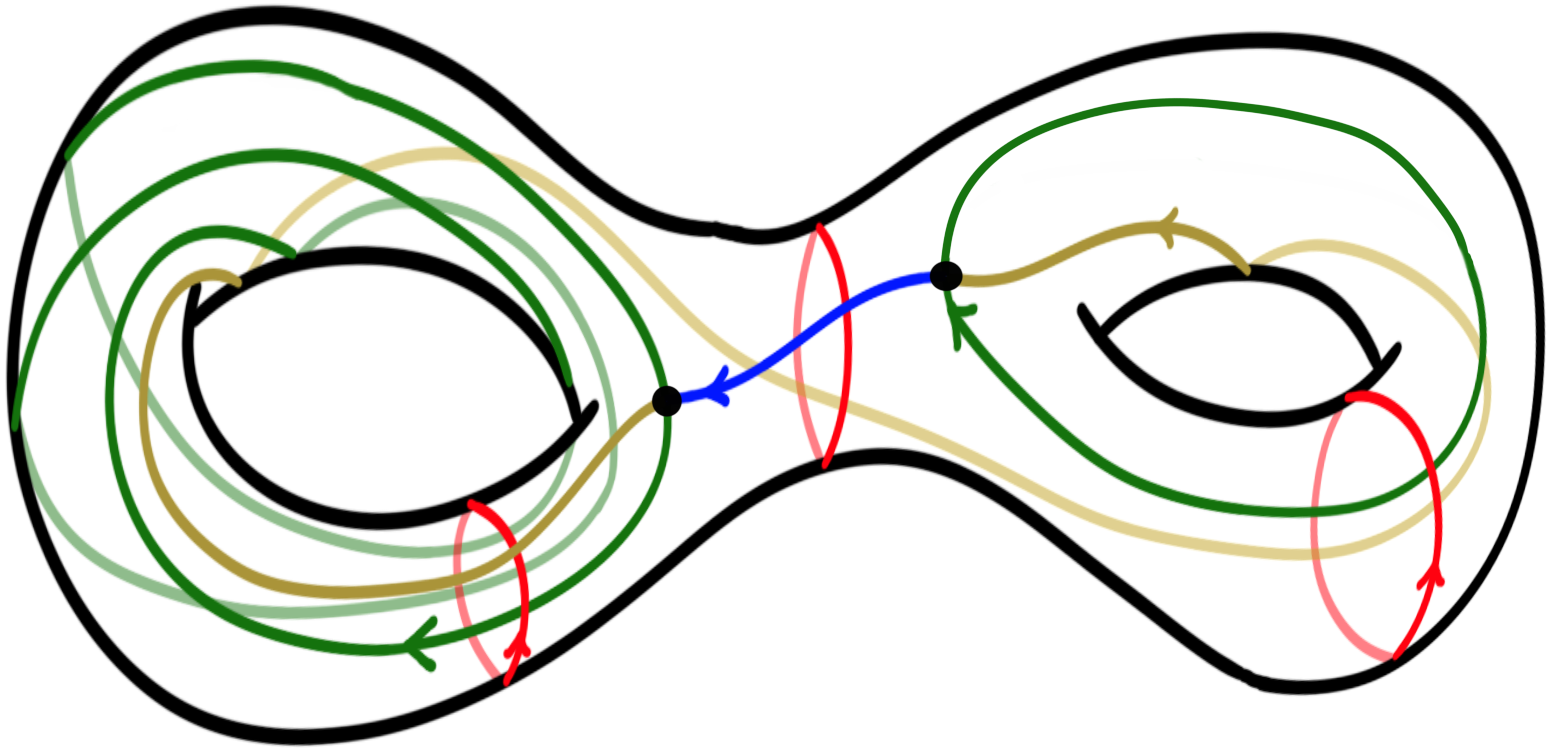}
\put(91.5,4){$d_2$}
\put(33.6,2){$d_1$}
\put(52,13){$\disk A$}
\put(91,38){$l_2$}
\put(18,2.5){$l_1$}
\put(62,32){$b_2$}
\put(43.5,18){$b_1$}
\put(55.8,25){$k_\plus$}
\put(78,34){$k_\minus$}
\end{overpic}
\caption{Arcs on $\partial\HK$.}
\label{fig:curves_hk}
\end{subfigure}
\hspace*{.02cm}
\begin{subfigure}[t]{.48\linewidth}
\centering
\begin{overpic}[scale=.15,percent]{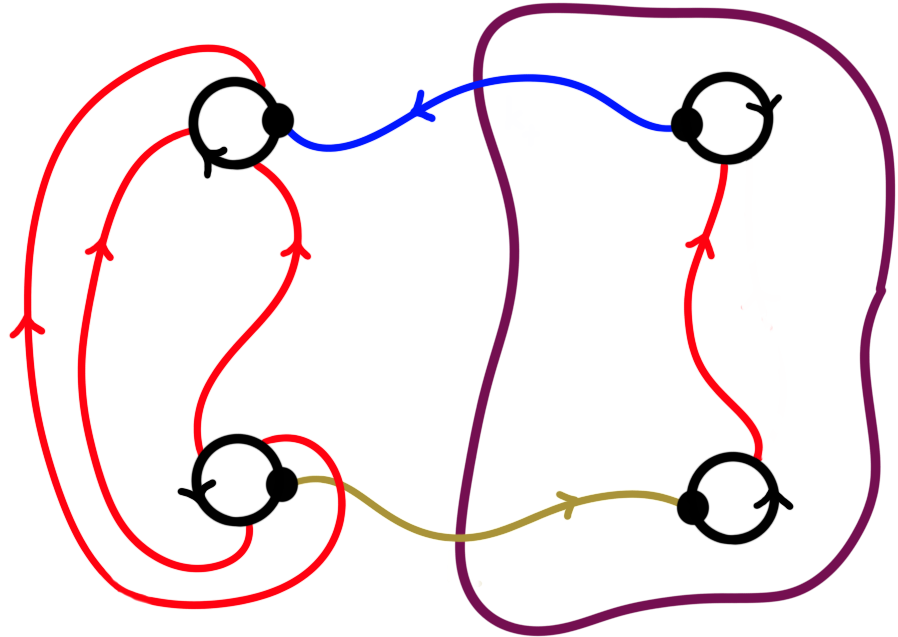}
\put(95,65){$\disk A$}
\put(3,4){$d_{11}$}
\put(28,30){$d_{12}$}
\put(11,33){$d_{13}$}
\put(80,40){$d_2$}
\put(71.7,52){\footnotesize $2_\plus$}
\put(32,58){\footnotesize$1_\plus$}
\put(73.5,9){\footnotesize $2_\minus$}
\put(31,11.3){\tiny $1_\minus$}
\put(45,51){$k_\plus$}
\put(65,17.5){$k_\minus$}
\end{overpic}
\caption{Liftings.}
\label{fig:lifting}
\end{subfigure}
\caption{Arcs on $\partial\HK$ and their lifts in $P$.}
\end{figure}

In the following, as in Section \ref{subsec:arcs_punctured_sphere}, we regard an oriented arc as the image of some path, and use the same notation for both the arc and path.

\subsubsection{Arcs and disks induced by non-characteristic annuli}\label{subsubsec:curves_nonchar}
Hereinafter, $A$ denotes the type $3$-$3$ annulus   
in $\Compl\HK$, and $\Disk A\subset\HK$ denotes an essential, 
separating disk disjoint from $\partial A$. Let $V_1,V_2$ be the solid tori cut off by
$\Disk A$ from
$\HK$; 
by Lemma \ref{lm:typeK_non_separating}, it may be assumed that components   
$l_1:=A\cap V_1$ and $l_2:=A\cap V_2$ 
of $\partial A$ have slopes of $\frac{p}{q}$ and $pq$ with respect to $(\sphere, V_1), (\sphere, V_2)$, respectively, with 
$p\neq 0,\pm 1$ and $q>0$.
Let $D_i\subset \HK$ be meridian disks of $V_i$ disjoint from $\Disk A$, $i=1,2$, and denote by $\disk A$ (resp.\ $d_1, d_2$) the boundary of $\Disk A$ (resp.\ $D_1, D_2$); see Fig.\ \ref{fig:curves_hk}. 
 
Recall from Section \ref{subsec:typeK}, 
non-characteristic separating annuli 
are classified by the frontier $A_n$ 
of the M\"obius band $M_n:=\pi^{-1}(\gamma_n)$, $n\in \mathbb{Z}$,
where $\pi:X\rightarrow \pklein$ 
is the bundle projection from 
the I-fibered component $X\subset\Compl\HK$ to a once-punctured Klein bottle $\pklein$, and $\gamma_n$ is  a simple loop, homotopic to $\alpha\beta^n$ with $\alpha,\beta$ the oriented loops in Fig.\ \ref{fig:pklein}. 
Particularly, $\partial A_n$ 
and $\partial M_n$ are parallel in
$\partial\HK$.

\subsubsection{Orientation}\label{subsubsec:orientation}
We orient $l_1,l_2$ so that they represent the same homology class in $H_1(A)$, and orient $d_1,d_2$ so that $[d_i,l_i]_b=+1$ in $\partial\HK$, for every $b\in l_i\cap d_i$, 
$i=1,2$ (Fig.\ \ref{fig:curves_hk}).
Observe that, for each $n$, $\partial M_n$ meets $\partial A$ at two points, one in $l_1$ and the other in $l_2$, and it may be assumed that $M_n$, $n\in\mathbb{Z}$, all meet $\partial A$ at the same points $b_1\in l_1,b_2\in l_2$, and $\partial M_n$ is oriented so 
that $[\partial M_n, l_2]_{b_2}=-1$ and $[\partial M_n, l_1]_{b_1}=+1$. 
Set $b_2$ to be the base point of $\partial \HK$ and $\HK$.

Now $b_1\cup b_2$ cuts $\partial M_0$ into two oriented arcs;
denote by $k_\plus$ the arc going from $b_2$ to $b_1$ 
and by $k_\minus$ the other arc (Fig.\ \ref{fig:curves_hk}). Then $\partial M_n$ 
is homotopic to $k_\plus l_1^n k_\minus l_2^n$, relative to $b_2$, where $l_i$ is regarded as an oriented loop
based at $b_i$, $i=1,2$. 
The goal is to identify the homotopy class of the loop 
$k_\plus l_1^nk_\minus l_2^n$, 
in terms of a suitable generating pair 
of $\pi_1(\HK,b_2)$, abbreviated to 
$\pi_1(\HK)$ henceforth.
\subsubsection{Liftings in a four-times punctured sphere}
Cutting $\partial\HK$ along $l_1,l_2$, 
we obtain a $4$-punctured sphere and  
a quotient map $\pi:P\rightarrow \partial\HK$. Orient $P$ so that $\pi$
is orientation-preserving. By convention, a lifting of an oriented arc from $\partial\HK$ to $P$ always carries the induced orientation.

The lifting of $d_2\subset \partial\HK$ to $P$ is an oriented arc, denoted by the same letter $d_2$, whereas the lifting of $d_1\subset \partial\HK$ to $P$ consists of $q$ oriented arcs, denoted by $d_{1i}$, $i=1,\dots, q$; set $d_{1\ast}:=d_{11}\cup\dots\cup d_{1q}$. 
The lifting of $\disk A\subset\partial\HK$ to $P$ is also denoted by the same letter $\disk A$; as yet no orientation is assigned to $\disk  A\subset\partial\HK$ and hence to its lifting in $P$. 
Likewise, the same notation $k_{\plusminus}$ is used for 
the lifting of $k_{\plusminus}\subset\partial\HK$ to $P$. 
Denote by $2_\plus$ (resp.\ $1_\minus$) the starting point of $k_\plus$ (resp.\ $k_\minus$) and its endpoint by $1_\plus$ (resp.\ $2_\minus$).
Also, let $l_{i\plusminus}\subset \partial P$ be the component  
containing $i_\plusminus$, $i=1,2$. We then have $l_{i\plusminus}=\pi^{-1}(l_i)$, $i=1,2$, and $d_2$ (resp.\ $d_{1\ast}$) in $P$ goes from $l_{2\minus}$ to $l_{2\plus}$ (resp.\ from $l_{1\minus}$ to $l_{1\plus}$);
see Fig.\ \ref{fig:lifting} for an example with $q=3$.

\subsubsection{Coordinate system}
By convention, we order 
$d_{11},\dots, d_{1q}$ so that $l_{1\minus}$, 
as an oriented loop based at $1_\minus$, 
meets $d_{1*}$ in consecutive order; 
particularly, $d_{1\ast}$ cuts $P$ into $q$ components with $1_\minus$ in the one bounded by $d_{1q}$ and $d_{11}$. 
Fix an arc $s_\plus$ (resp.\ $s_\minus$) going from $2_\plus$ to $1_\plus$ (resp.\ from $1_\minus$ to $2_\minus$) disjoint from $d_{1q}, d_2$ that meets $\disk A$ at one point. The triplet $\{d_2,d_{1q},s_\plus\}$ gives a 
coordinate system $\mathfrak{C}$ 
for $(P,l_{2\plus},l_{1\plus})$. 
Note that $s_\plusminus$ are not unique, and every two choices of $s_\plus$ (resp.\ $s_\minus$) differ by some Dehn twists along $\disk A$. Set $s:=s_\plus\cup s_\minus$, and 
denote by the same letters the images of $s_\plus,s_\minus,s$ in $\partial\HK$ under $\pi$. 
Suppose the component containing $1_\plus $ is bounded by $d_{1\delta}$ and $d_{1\delta+1}$. 
Then the construction of $s_\plusminus$ implies the following. 
\begin{lemma}\label{lm:basic_loop}\hfill
\begin{enumerate}
\item $s\cap l_i=b_i$, $i=1,2$, with $[s,l_1]_{b_1}=1$ and $[s,l_2]_{b_2}=-1$.
\item $s\cap d_2=\emptyset=s\cap d_{1q}$
\item $\algint(d_1,s)=\delta$, and $p\delta\equiv -1$ (mod $q$). 
\item $0\leq \delta <q$, and
$q>0$ if and only if $\delta\neq 0$.
\item If $q=2$, $\delta=1$. 
\end{enumerate}
\end{lemma}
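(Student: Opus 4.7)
The plan is to exploit the tight control on $s_\plus$ and $s_\minus$ from the construction: each is an arc in $P$ disjoint from $d_{1q}$ and $d_2$ and meeting the separating curve $\disk A$ exactly once. Items (1) and (2) follow essentially from unpacking this construction together with the orientation conventions of Section \ref{subsubsec:orientation}. Specifically, after gluing $1_\plus$ to $1_\minus$ (both equal $b_1$) and $2_\plus$ to $2_\minus$ (both equal $b_2$) via $\pi$, the arcs $s_\plus, s_\minus$ assemble into a loop $s \subset \partial\HK$ meeting $l_i$ only at $b_i$; the signs $[s, l_1]_{b_1} = +1$ and $[s, l_2]_{b_2} = -1$ are then forced by the chosen orientations of $l_1, l_2, k_\plusminus$ and the condition that $s_\plus$ enters $l_{1\plus}$ from a definite side. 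Claim (2) is immediate from the disjointness built into the construction.

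The heart of the argument is (3). The arcs $d_{1\ast}$ cut the three-holed-sphere piece $P_1 \subset P$ into $q$ regions, one of which is an annulus around the puncture $\disk A$; the rest are disks. Because $s_\plus$ meets $\disk A$ once, avoids $d_{1q}$, and ends in the component bounded by $d_{1\delta}$ and $d_{1\delta+1}$, one may homotope $s_\plus$ rel endpoints to a representative crossing each of $d_{11}, \ldots, d_{1\delta}$ exactly once; the convention $[d_1, l_1]_{b_1} = +1$ makes each of these crossings contribute $+1$, while $s_\minus$ contributes $0$ symmetrically, giving $\algint(d_1, s) = \delta$. For the congruence $p\delta \equiv -1 \pmod q$, I lift to the universal cover of the torus $\partial V_1$: parameterizing $l_1$ as the line of slope $p/q$ through the origin and $d_1$ as a vertical meridian, the $q$ points of $l_1 \cap d_1$ yield the correspondence $j \equiv pk \pmod q$ between the $d_1$-ordering and the $l_1$-ordering of the crossings. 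Translating this correspondence back to $P_1$ under the convention that orders $d_{1i}$ by consecutive $l_{1\minus}$-endpoints shows that the $l_{1\plus}$-endpoints are shifted by $-p^{-1} \pmod q$; reading off the position of $b_1$ in this shifted labeling yields $\delta \equiv -p^{-1} \pmod q$, hence $p\delta \equiv -1 \pmod q$.

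Claims (4) and (5) then follow quickly: the bounds $0 \leq \delta < q$ are built into the definition of $\delta$, and the congruence from (3) combined with $\gcd(p,q)=1$ shows $\delta = 0$ precisely when $q = 1$, which gives the equivalence stated in (4). For (5), with $q = 2$ we have $p$ odd, so $\delta = 1$ is the unique element of $\{0, 1\}$ satisfying $p\delta \equiv -1 \pmod 2$. The main obstacle is pinning down the sign in (3): multiple orientation conventions (on $l_1$ via $H_1(A)$, on $d_1$ via $[d_1, l_1]_{b_1} = +1$, on $P$ via $\partial\HK$, and on $s_\plusminus$ via $k_\plusminus$) all feed into the sign of the shift, and a slip could yield $p\delta \equiv +1$ instead. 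I plan to cross-check the sign against the forced case $q = 2$, $\delta = 1$, and against a small explicit slope such as $(p,q) = (3,2)$.
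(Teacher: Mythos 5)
The paper offers no proof of this lemma beyond the sentence ``the construction of $s_\plusminus$ implies the following,'' so there is nothing to compare against; your outline is the natural way to supply the missing argument, and its overall architecture --- (1)--(2) from the construction and the orientation conventions, the congruence in (3) from the identification pattern of the meridian arcs on the cut-open torus $\partial V_1$, and (4)--(5) as arithmetic corollaries of (3) --- is sound. (You have also, reasonably, read the ``$q>0$'' in item (4) as a typo for ``$q>1$''; as printed it is vacuous since $q>0$ always holds.) One step in (3) is stated incorrectly, however. You claim $s_\plus$ can be homotoped rel endpoints to cross each of $d_{11},\dots,d_{1\delta}$ exactly once while $s_\minus$ contributes $0$. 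This presupposes that $s_\plus$ enters the pair of pants bounded by $l_{1\plus}$, $l_{1\minus}$, $\disk A$ through a point of $\disk A$ lying in the complementary region $R_0$ of $d_{1\ast}$ bounded by $d_{1q}$ and $d_{11}$ (the region containing $1_\minus$). Nothing in the construction forces $\disk A$ into that region: if $\disk A$ sits in the region $R_{i_0}$ bounded by $d_{1i_0}$ and $d_{1,i_0+1}$, then the minimal representative of $s_\plus$ avoiding $d_{1q}$ meets $d_{1\ast}$ algebraically $\delta-i_0$ times and $s_\minus$ meets it $i_0$ times. The conclusion $\algint(d_1,s)=\delta$ survives because the two contributions sum to $\delta$, but you should either do this two-term accounting or first normalize the meridian disk $D_1$ by an isotopy so that $\disk A\subset R_0$.

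The second soft spot is the one you flag yourself, but your proposed remedy does not work: both of your cross-checks ($q=2$, and $(p,q)=(3,2)$) are mod $2$, where $+1$ and $-1$ coincide, so they cannot distinguish $p\delta\equiv -1$ from $p\delta\equiv +1$. The sign genuinely matters downstream --- the divisibility $q\mid p\delta+1$ is what makes $\Theta=p\Delta+\frac{p\delta+1}{q}$ an integer in the proof of Theorem \ref{teo:typeK} --- so it must be pinned down against a case with $q\geq 3$, e.g.\ $(p,q)=(2,3)$, where the two candidate congruences give $\delta=1$ and $\delta=2$ respectively. When you do the universal-cover computation, note also that with the standard orientation of $\mathbb{R}^2/\mathbb{Z}^2$ and $[l_1]=p[d_1]+q[l_0]$ one gets $[d_1,l_1]=-q$, so the paper's normalization $[d_1,l_1]_{b_1}=+1$ already builds in an orientation reversal relative to the naive picture; this is exactly the kind of place where the extra sign you are worried about can enter or cancel.
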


\subsubsection{Homotopy classes of $k_\pm$}
Let $(\slope,\lambda_\plus, \mu_\plus)$ be the coordinate of $k_\plus$ with respect to $\mathfrak{C}$, where $\slope=\frac{2\rho}{2\beta+1}$, $\rho, \beta\in\mathbb{Z}$,
$\rho\geq 0$, and $2\rho,2\beta+1$ relatively prime when $\slope\neq 0$. Denote by $A_{\vert \beta\vert},\hat{A}_{\vert\beta\vert}$ the paired unit sequence and its extension induced by $k_\plus$, respectively, and set 
$\hat l_{1\plus}:=s_\plus l_{1\plus}s_\plus^{-1}$.
Orient $d_A$ so that $s_\plus,d_A$ are positively oriented at $s_\plus\cap d_A$, and consider the oriented loop  
$\hat a_{2\plus}\subset \disk A\cup s_\plus$ based at 
$2_\plus$. 

\begin{lemma}\label{lm:kplus}
The path $k_\plus$ 
is homotopic, relative to $2_\plus\cup 1_\plus$, to
\begin{align*}
l_{2_\plus}^{\lambda_\plus}\mathcal{\hat A}_\beta(\hat l_{1\plus}, l_{2\plus}, \hat a_{2\plus})\hat l_{1\plus}^{\mu_\plus}s_\plus & \text{ if } \beta\geq 0, \text{ and to }\\
l_{2_\plus}^{\lambda_\plus}\mathcal{\hat A}_{\minus\beta}(l_{2\plus}, \hat l_{1\plus}, \hat a_{2\plus})\hat l_{1\plus}^{\mu_\plus}s_\plus & \text{ if } \beta<0.
\end{align*} 
\end{lemma}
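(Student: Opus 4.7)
The plan is to derive the formula as a direct instance of Lemma \ref{lm:arc_decomposition}, applied to the oriented arc $\gamma = k_\plus$ in the $4$-punctured sphere $P$ with the coordinate system $\mathfrak{C} = \{d_2, d_{1q}, s_\plus\}$ for $(P, l_{2\plus}, l_{1\plus})$ set up in Section \ref{subsubsec:curves_nonchar}. Under the dictionary $(e,o) \leftrightarrow (2_\plus, 1_\plus)$, $(\Ce, \Co) \leftrightarrow (l_{2\plus}, l_{1\plus})$, and $(d_e, d_o, s_0) \leftrightarrow (d_2, d_{1q}, s_\plus)$, the two formulas produced by Lemma \ref{lm:arc_decomposition} translate verbatim into the desired expressions for $k_\plus$ in the cases $\beta \geq 0$ and $\beta < 0$, provided the remaining ingredients --- the dual circle $\ver$ and the based loop $\hat \ver$ --- are correctly identified on the $P$ side.

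The first step is to check that the lifted circle $\disk A \subset P$ plays the role of $\ver$. Since the disk $\Disk A$ is disjoint from the meridian disks $D_1, D_2$, the circle $\disk A$ is disjoint from $d_1 \cup d_2$ and hence from $d_2 \cup d_{1q}$; by the construction of $s_\plus$ in Section \ref{subsubsec:curves_nonchar} one has $\vert \disk A \cap s_\plus \vert = 1$. Because $\disk A$ separates $l_{1\plus} \cup l_{1\minus}$ from $l_{2\plus} \cup l_{2\minus}$ in $P$, the complementary reference arc $s_\plus'$ --- the arc from $l_{2\minus}$ to $l_{1\minus}$ disjoint from $d_2 \cup d_{1q} \cup s_\plus$ determined by $\mathfrak{C}$ --- also meets $\disk A$ exactly once. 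With $\disk A$ oriented so that $s_\plus$ and $\disk A$ are positively oriented at their unique intersection, $\disk A$ serves as the analogue of $\ver$ and the based loop $\hat a_{2\plus} \subset \disk A \cup s_\plus$ as the analogue of $\hat \ver$.

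With this dictionary in place, the coordinate $(\slope, \lambda_\plus, \mu_\plus)$ of $k_\plus$ with respect to $\mathfrak{C}$ is by definition the same triple $(\slope, \lambda, \mu)$ appearing in Lemma \ref{lm:arc_decomposition}. Substituting the handlebody labels into the conclusions of that lemma,
\begin{align*}
\gamma &\simeq C_e^\lambda \mathcal{\hat A}_\beta(\hat C_o, C_e, \hat \ver)\hat C_o^\mu s_0 && \text{if } \beta \geq 0, \\
\gamma &\simeq C_e^\lambda \mathcal{\hat A}_{-\beta}(C_e, \hat C_o, \hat \ver)\hat C_o^\mu s_0 && \text{if } \beta < 0,
\end{align*}
immediately yields the two claimed expressions for $k_\plus$ relative to $\{2_\plus, 1_\plus\}$.

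The main obstacle, which is really bookkeeping rather than conceptual, is verifying that all orientation conventions match: that the dual pairings of $d_2, d_{1q}, s_\plus'$ with $l_{2\plus}, \hat l_{1\plus}, \hat a_{2\plus}$ carry sign $+1$ under the orientations fixed in Section \ref{subsubsec:orientation} and Lemma \ref{lm:basic_loop}, and that the paired unit sequence $A_{\vert\beta\vert}$ and its extension $\hat A_{\vert\beta\vert}$ induced by $k_\plus$ in the coordinate system $\mathfrak{C}$ coincide term-by-term with the abstract sequences produced in the proof of Lemma \ref{lm:arc_decomposition} by reading off intersections with $d_e \cup d_o$ and $d_e \cup d_o \cup s_0'$ along a lift of $s_\slope$. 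Once these sign checks are carried out, the substitution into the abstract formulas is purely formal.
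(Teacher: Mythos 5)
Your proposal is correct and follows exactly the paper's own argument: the paper's proof is literally the one-line substitution $\gamma, e, o, C_e, C_o, s_0, \ver \mapsto k_\plus, 2_\plus, 1_\plus, l_{2\plus}, l_{1\plus}, s_\plus, \disk A$ into Lemma \ref{lm:arc_decomposition}. Your additional verification that $\disk A$ genuinely plays the role of $\ver$ (disjointness from $d_2\cup d_{1q}$, single intersection with $s_\plus$, orientation conventions) is a sound and slightly more careful rendering of the same step.
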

\begin{proof}
Replace 
$\gamma, e,o,C_e,C_o,s_0, \ver$ in Lemma \ref{lm:arc_decomposition} with $k_\plus, 2_\plus,1_\plus, l_{2\plus}, l_{1\plus}, s_\plus, \disk A$.   
\end{proof}
Similarly, set $\hat l_{2\minus}:=s_\minus l_{2\minus}s_\minus^{-1}$, and 
let $\hat a_{1\minus}$ be the oriented loop in $\disk A\cup s_\minus$ based at $1_\minus$.   

\begin{lemma}\label{lm:kminus}
The path $k_\minus$ is homotopic, relative to $1_\minus\cup 2_\minus$, to 
\begin{align*}
l_{1_\minus}^{\mu_\minus}\mathcal{\hat A}_\beta(\hat l_{2\minus}^{-1}, l_{1\minus}^{-1}, \hat a_{1\minus}^{-1}) \hat l_{2\minus}^{\lambda_\minus}s_\minus & \text{ if } \beta\geq 0, \text{ and to }\\
l_{1_\minus}^{\mu_\minus}\mathcal{\hat A}_{-\beta}(l_{1\minus}^{-1}, \hat l_{2\minus}^{-1}, \hat a_{1\minus}^{-1}) \hat l_{2\minus}^{\lambda_\minus}s_\minus& \text{ if } \beta<0,
\end{align*}
for some $\lambda_\minus,\mu_\minus\in\mathbb{Z}$.
\end{lemma}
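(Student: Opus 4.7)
The plan is to apply Lemma \ref{lm:arc_decomposition} to $k_\minus$ using a ``mirror'' coordinate system $\mathfrak{C}_\minus$ for the pair $(P, l_{1\minus}, l_{2\minus})$ adapted to the orientation of $k_\minus$ from $1_\minus$ to $2_\minus$. Concretely, I would take $d_e$ to be $d_{1q}$ with its orientation reversed (so that it runs from $l_{1\plus}$ to $l_{1\minus}$), $d_o$ to be $d_2$ with its orientation reversed (from $l_{2\plus}$ to $l_{2\minus}$), $s_0 = s_\minus$, and $\ver = \disk A$. Under these substitutions one has $C_e = l_{1\minus}$, $\hat C_o = s_\minus l_{2\minus} s_\minus^{-1} = \hat l_{2\minus}$, and the dual loop based at $1_\minus$ in $\disk A \cup s_\minus$ is precisely $\hat a_{1\minus}$.

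The main geometric step is to verify that $k_\minus$ has slope $\slope = \frac{2\rho}{2\beta+1}$ with respect to $\mathfrak{C}_\minus$, matching the slope of $k_\plus$ with respect to $\mathfrak{C}$. For this I would lift the composite loop $\partial M_0 \simeq k_\plus k_\minus$ to $\picirc : \pR \to P$ and exploit that $\partial M_0$ bounds the M\"obius band $M_0 = \pi^{-1}(\alpha)$ in the I-fibered component of $\Compl\HK$; the lift of $M_0$ coordinates the translation vectors of the lifts of $k_\plus$ and $k_\minus$, forcing $k_\minus$ to realize the same $\rho$ and $\beta$ in $\mathfrak{C}_\minus$ as $k_\plus$ does in $\mathfrak{C}$.

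Once the slope equality is in hand, the orientation reversals of $d_e$ and $d_o$ in $\mathfrak{C}_\minus$ negate every intersection sign $[d_e, s_\slope]_{x_i}$ and $[d_o, s_\slope]_{x_i}$ defining the paired unit sequence $A'_\beta$ and its extension $\hat A'_\beta$ induced by $k_\minus$. A direct check of the definition of the interpolating function then shows
\[
\mathcal{\hat A}'_\beta(x, y, z) = \mathcal{\hat A}_\beta(x^{-1}, y^{-1}, z^{-1})
\]
in any group. Substituting this identity into the conclusion of Lemma \ref{lm:arc_decomposition} applied to $k_\minus$, and relabeling the two resulting integers as $\mu_\minus$ and $\lambda_\minus$, yields the desired expression for $\beta \geq 0$; the case $\beta < 0$ is handled by the same argument with $-\beta$ in place of $\beta$, mirroring the dichotomy of Lemma \ref{lm:arc_decomposition}.

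The chief obstacle is the slope-matching step, since the connecting arcs $s_\plus$ and $s_\minus$ entering $\mathfrak{C}$ and $\mathfrak{C}_\minus$ are chosen independently: the equality cannot be read off from the definitions of $\mathfrak{C}, \mathfrak{C}_\minus$ alone, and one must leverage the geometry of the M\"obius-band lift of $\partial M_0$ through the Klein-bottle fibration of the I-fibered component of $\Compl\HK$ to establish it.
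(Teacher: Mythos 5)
Your overall strategy---reduce $k_\minus$ to the $k_\plus$ situation via a mirror symmetry of $P$---is the same idea the paper uses, but the paper implements it more directly: it introduces an involution $T$ of $P$ sending $l_{2\plusminus}$ to $l_{1\minusplus}$, $d_2$ to $d_{1q}$, and $s_\plus$ to $s_\minus$, observes that $T(k_\minus)$ and $k_\plus$ have the same slope (so that $k_\minus\simeq l_{1\minus}^{\mu_d}\,T(k_\plus)\,l_{2\minus}^{\lambda_d}$ for some integers $\mu_d,\lambda_d$), and then simply pushes the already-established decomposition of Lemma \ref{lm:kplus} through $T$, using $T(l_{2\plus})=l_{1\minus}^{-1}$, $T(\hat l_{1\plus})=\hat l_{2\minus}^{-1}$, $T(\hat a_{2\plus})=\hat a_{1\minus}^{-1}$ and $T(s_\plus)=s_\minus$. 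This yields the same interpolating function $\mathcal{\hat A}_\beta$ with inverted arguments for free, with no need to recompute any unit sequences for $k_\minus$.

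Your route has two genuine gaps. First, the slope-matching step as you describe it does not parse: $\partial M_0$ is a loop in $\partial\HK$ but not in $P$ (in $P$ the arc $k_\plus$ runs from $2_\plus$ to $1_\plus$ while $k_\minus$ runs from $1_\minus$ to $2_\minus$, so they do not concatenate), hence the ``composite loop'' has no lift along $\picirc:\pR\rightarrow P$, and $M_0$ itself lives in the $3$-manifold $X$, not in any surface covered by $\pR$. The geometric fact you need is that the deck involution of the double cover $\partial M_0\rightarrow\gamma_0$, coming from the $\partial I$-bundle $\partial_b X\rightarrow\pklein$, carries $k_\minus$ onto $k_\plus$ and extends to an involution of $P$ matching the two coordinate systems; this is precisely the paper's $T$, which you gesture at but never construct. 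Second, your claim that reversing the orientations of $d_e$ and $d_o$ ``negates every intersection sign'' compares intersections of two \emph{different} reference arcs (the $s_\slope$ built in $\mathfrak{C}$ from $2_\plus$ to $1_\plus$, versus the one built in $\mathfrak{C}_\minus$ from $1_\minus$ to $2_\minus$) with different coordinate arcs at different points; to equate or negate these sequences you again need the homeomorphism identifying the two coordinate systems, and whether the signs come out negated or preserved depends on whether that homeomorphism preserves or reverses the orientation of $P$---a point you never address, and exactly the point that decides whether the arguments of $\mathcal{\hat A}_\beta$ get inverted, as the statement requires. Both gaps are closed simultaneously by working with the involution $T$ applied to the conclusion of Lemma \ref{lm:kplus}, rather than re-running Lemma \ref{lm:arc_decomposition} on $k_\minus$.
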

\begin{proof}
Consider the involution $T:P\rightarrow P$ that sends 
$l_{2\plusminus}$ to $l_{1\minusplus}$, and $d_2$ to $d_{1q}$, and $s_\plus$ to $s_\minus$. 
Then $T(k_\minus)$ and 
$k_\plus$ have the same slope, and hence, up to homotopy relative to $1_\minus\cup 2_\minus$, $k_\minus$ can be obtained from $T(k_\plus)$ by some Dehn twists along $l_{1\minus},l_{2\minus}$.  In other words, $k_\minus$ is homotopic, relative to $1_\minus\cup 2_\minus$, to 
$l_{1\minus}^{\mu_d}T(k_\plus) l_{2\minus}^{\lambda_d}$, 
for some $\mu_d,\lambda_d\in\mathbb{Z}$.
Set $\mu_\minus:=\mu_d-\lambda_\plus$ and $\lambda_\minus:=\lambda_d-\mu_\plus$. Then the assertion follows from Lemma \ref{lm:kplus} and the fact that $T(l_{2\plus})=l_{1\minus}^{-1}$, 
$T(\hat l_{1\plus})=\hat l_{2\minus}^{-1}$, $T(\hat a_{2\plus})=\hat a_{1\minus}^{-1}$, and $T(s_\plus)=s_\minus$.
\end{proof}

Set $\hat l_{1\minus}:=s_\minus^{-1} l_1 s_\minus$, and $\hat a_{2\minus}:=s_\minus^{-1} \hat a_{1\minus} s_\minus$. Then Lemma \ref{lm:kminus} implies a decomposition of $k_\minus$ in terms of loops based at $2_\minus$. 

\begin{corollary}\label{cor:kminus}
The path $k_\minus$ is homotopic, relative to $1_\minus\cup 2_\minus$, to
\begin{align*}
s_\minus \hat l_{1\minus}^{\mu_\minus}\mathcal{\hat A}_\beta(l_{2\minus}^{-1}, \hat l_{1\minus}^{-1}, \hat a_{2\minus}^{-1})l_{2\minus}^{\lambda_\minus} 
&\text{ if } \beta\geq 0, \text{ and to }\\
s_\minus \hat l_{1\minus}^{\mu_\minus}\mathcal{\hat A}_{-\beta}(\hat l_{1\minus}^{-1}, l_{2\minus}^{-1},\hat a_{2\minus}^{-1})l_{2\minus}^{\lambda_\minus}
&\text{ if } \beta< 0.
\end{align*} 
\end{corollary}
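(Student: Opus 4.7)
The plan is to derive Corollary \ref{cor:kminus} from Lemma \ref{lm:kminus} by a change-of-basepoint argument, conjugating by the path $s_\minus$ which runs from $1_\minus$ to $2_\minus$.

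First I would observe that Lemma \ref{lm:kminus} expresses $k_\minus$ as $L \cdot s_\minus$, where
$L := l_{1\minus}^{\mu_\minus}\mathcal{\hat A}_\beta(\hat l_{2\minus}^{-1}, l_{1\minus}^{-1}, \hat a_{1\minus}^{-1}) \hat l_{2\minus}^{\lambda_\minus}$
is a loop based at $1_\minus$ (for $\beta\ge 0$; the case $\beta<0$ is entirely parallel). Since $k_\minus$ is a path from $1_\minus$ to $2_\minus$, I have the standard homotopy $k_\minus \simeq s_\minus(s_\minus^{-1} L\, s_\minus)$, where $s_\minus^{-1} L\, s_\minus$ is now a loop based at $2_\minus$. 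Thus it suffices to identify this conjugate loop with the asserted expression.

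Next I would check the defining identities: by construction $s_\minus^{-1} l_{1\minus} s_\minus = \hat l_{1\minus}$ and $s_\minus^{-1}\hat a_{1\minus} s_\minus = \hat a_{2\minus}$, while
$s_\minus^{-1}\hat l_{2\minus} s_\minus = s_\minus^{-1}(s_\minus l_{2\minus} s_\minus^{-1})s_\minus \simeq l_{2\minus}$.
Conjugation by $s_\minus$ is thus the change-of-basepoint isomorphism $\pi_1(\partial\HK,1_\minus)\to \pi_1(\partial\HK,2_\minus)$ that sends the triple $(\hat l_{2\minus}, l_{1\minus}, \hat a_{1\minus})$ to $(l_{2\minus}, \hat l_{1\minus}, \hat a_{2\minus})$. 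Since $\mathcal{\hat A}_\beta$ is built from its arguments purely via products of integer powers, conjugation distributes over it, yielding
\[
s_\minus^{-1} L\, s_\minus \;\simeq\; \hat l_{1\minus}^{\mu_\minus}\,\mathcal{\hat A}_\beta(l_{2\minus}^{-1},\,\hat l_{1\minus}^{-1},\,\hat a_{2\minus}^{-1})\,l_{2\minus}^{\lambda_\minus},
\]
which is exactly the desired formula when multiplied on the left by $s_\minus$.

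For $\beta<0$ the same manipulation applied to the second formula in Lemma \ref{lm:kminus} gives the stated expression, since the conjugation identities on the generating loops are independent of $\beta$.

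I do not anticipate a real obstacle: the statement is a pure base-point translation of Lemma \ref{lm:kminus} from $1_\minus$ to $2_\minus$. The only point requiring care is the formal verification that the interpolating function $\mathcal{\hat A}_\beta$ commutes with conjugation; this is immediate from its definition as an alternating product of powers, so no delicate argument is involved.
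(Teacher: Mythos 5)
Your argument is correct and is exactly what the paper's one-line proof (``by the definition of $\hat l_{1\minus}$, $\hat a_{2\minus}$, and $\hat l_{2\minus}$'') intends: rewrite $L\,s_\minus$ as $s_\minus(s_\minus^{-1}L\,s_\minus)$ and distribute the conjugation over the alternating product, using $s_\minus^{-1}l_{1\minus}s_\minus=\hat l_{1\minus}$, $s_\minus^{-1}\hat l_{2\minus}s_\minus\simeq l_{2\minus}$, and $s_\minus^{-1}\hat a_{1\minus}s_\minus=\hat a_{2\minus}$. The only cosmetic point is that these homotopies take place in the $4$-punctured sphere $P$ rather than in $\partial\HK$, but this does not affect the argument.
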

\begin{proof}
By the definition of $\hat l_{1\minus}, \hat a_{2\minus}$, and $\hat l_2$.
\end{proof}

\subsubsection{Conjugate class of $\partial M_n$}
Set $\mu:=\mu_\plus+\mu_\minus, \lambda:=\lambda_\plus+\lambda_\minus$, 
and $\hat l_1:=s_\plus l_1 s_\plus^{-1}$
Also, choose a loop $l_0\subset \partial\HK$ based at $b_2$
and disjoint from $d_2$ so that $l_0$ 
is the preferred longitude 
of the solid torus $V:=\HK-\openrnbhd{D_2}\subset \sphere$ with $\geoint(d_1,l_0)=\algint(d_1,l_0)=1$.
Recall that $\delta:=\algint(d_1,s)$ and 
$\partial M_n$ is homotopic, 
relative to $b_2$, to $k_\plus l_1^n k_\minus l_2^n$.

Denote by $u,v$ the homotopy classes in $\pi_1(\HK)$
represented by $l_2,l_0$, respectively.
Then $\{u,v\}$ is a generating pair of $\pi_1(\HK)$ with $[\hat l_1]=v^q$ and $[s]=v^\delta$ by Lemma \ref{lm:basic_loop}. In what follows, 
by $l \simeq l'$ (resp.\ $l \simeq_\partial l'$), we understand 
two loops $l,l'$ based at $b_2$ are homotopic, relative to $b_2$, in $\HK$ (resp.\ in $\partial\HK$).   
\begin{lemma}\label{lm:homotopy_class_Mn}
If $\beta\geq 0$ (resp.\ $\beta<0$), 
then the homotopy class represented by $\partial M_n$ is conjugate to 
\begin{multline*}
\mathcal{A}_\beta(v^q,u)v^{q(n+\mu)+\delta}\mathcal{A}_\beta(u^{-1},v^{-q})u^{\lambda+n}\\
\quad \left(\text{resp. }  
\mathcal{A}_{-\beta}(u,v^q)v^{q(n+\mu)+\delta}\mathcal{A}_{-\beta}(v^{-q},u^{-1})u^{\lambda+n}
\right). 
\end{multline*}  
\end{lemma}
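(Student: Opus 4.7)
The plan is to evaluate $[\partial M_n]$ in $\pi_1(\HK,b_2)$ by plugging the decompositions of Lemma \ref{lm:kplus} and Corollary \ref{cor:kminus} into the homotopy
\[
\partial M_n \simeq_\partial k_\plus\, l_1^n\, k_\minus\, l_2^n
\]
established in Section \ref{subsubsec:orientation}, and then simplifying modulo conjugation. The crucial observation that collapses the interpolating functions to alternating ones is that the loops $\hat a_{2\plus}$ and $\hat a_{2\minus}$ are nullhomotopic in $\HK$: each is of the form $\sigma \cdot d_A \cdot \sigma^{-1}$ where $\sigma$ is an arc of $s_\plusminus$, and $d_A$ bounds the separating disk $\Disk A\subset\HK$; hence in $\pi_1(\HK)$ every $\mathcal{\hat A}_\beta(x,y,\hat a_{\plusminus})$ reduces to $\mathcal{A}_\beta(x,y)$.

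Next I would introduce the auxiliary loops $K_\plus := k_\plus\, s_\plus^{-1}$ and $K_\minus := s_\minus^{-1}\, k_\minus$ based at $b_2$, and rewrite
\[
\partial M_n \simeq_\partial K_\plus \cdot (s_\plus l_1^n s_\minus) \cdot K_\minus \cdot l_2^n.
\]
Using Lemma \ref{lm:kplus} together with the identifications $[l_{2\plus}]=u$ and $[\hat l_{1\plus}]=[\hat l_1]=v^q$ gives $[K_\plus]=u^{\lambda_\plus}\mathcal{A}_\beta(v^q,u)v^{q\mu_\plus}$ for $\beta\ge 0$; the parallel computation with Corollary \ref{cor:kminus} and $[\hat l_{1\minus}]=v^q$ yields $[K_\minus]=v^{q\mu_\minus}\mathcal{A}_\beta(u^{-1},v^{-q})u^{\lambda_\minus}$. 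The middle factor telescopes via $s_\plus l_1^n s_\minus = (s_\plus l_1 s_\plus^{-1})^n\, s = \hat l_1^n\, s$, so $[s_\plus l_1^n s_\minus]=v^{qn+\delta}$ by Lemma \ref{lm:basic_loop}. Concatenating and using $\mu=\mu_\plus+\mu_\minus$,
\[
[\partial M_n] = u^{\lambda_\plus}\,\mathcal{A}_\beta(v^q,u)\,v^{q(\mu+n)+\delta}\,\mathcal{A}_\beta(u^{-1},v^{-q})\,u^{\lambda_\minus+n},
\]
and conjugating by $u^{-\lambda_\plus}$ (noting $\lambda_\minus+n+\lambda_\plus=\lambda+n$) gives the stated form. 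The $\beta<0$ case is handled identically, substituting the $\beta<0$ formulas of Lemma \ref{lm:kplus} and Corollary \ref{cor:kminus}, which simply interchange the first two arguments of the alternating function.

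The one subtle point, which I expect to be the main obstacle, is justifying the identity $[\hat l_{1\minus}]=v^q$ on the nose (rather than up to conjugation). The loops $\hat l_1$ and $\hat l_{1\minus}$ in $\pi_1(\partial\HK,b_2)$ are conjugate by $s=s_\plus s_\minus$, so in $\pi_1(\HK)$ one has $v^q=[s]\,[\hat l_{1\minus}]\,[s]^{-1}=v^\delta\,[\hat l_{1\minus}]\,v^{-\delta}$; the conclusion then follows because powers of $v$ commute in the free group $\pi_1(\HK)=\langle u,v\rangle$. The remainder of the proof is bookkeeping of basepoints along $s_\plus$ and $s_\minus$, which must be carried out consistently to ensure the conjugation at the end absorbs exactly the prefactor $u^{\lambda_\plus}$.
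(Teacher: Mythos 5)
Your proposal is correct and follows essentially the same route as the paper's proof: both reduce the interpolating functions to alternating ones by observing that $\hat a_{2\plusminus}$ bounds into the disk $\Disk A$ and is hence nullhomotopic in $\HK$, both handle $[\hat l_{1\minus}]=v^q$ by conjugating through $s$ and using that powers of $v$ commute, and both telescope $s_\plus l_1^n s_\minus$ into $\hat l_1^n s$ before absorbing the prefactor $u^{\lambda_\plus}$ by conjugation. The auxiliary loops $K_\plusminus$ are only a cosmetic repackaging of the same computation.
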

\begin{proof}
Let $\Phi$ be the composition $(P,2_\plusminus)\xrightarrow{\pi} (\partial\HK,b_2)\hookrightarrow (\HK,b_2)$. Then $\Phi(l_{2\plusminus})=l_2$, $\Phi(\hat l_{1\plus})=\hat l_1$, and 
$\Phi(\hat a_{2\plusminus})\simeq \mathbf{1}_{b_2}$, where 
$\mathbf{1}_{b_2}$ is the constant loop at $b_2$. 
Also, from 
\[
\Phi(\hat l_{1\minus})=s_\minus^{-1}l_1 s_\minus^{-1}
\bsimeq s^{-1}s_\plus l_1 s_\plus^{-1} s 
\bsimeq s^{-1}\hat l_1 s 
\] 
and 
$s \simeq l_0^\delta$,   
it follows that $\Phi(\hat l_{1\minus})\simeq \hat l_1$.   
Therefore, when $\beta\geq 0$ (resp.\ $\beta<0$),
\begin{multline*}
k_\plus\simeq l_2^{\lambda_\plus}\mathcal{A}_\beta(\hat l_1, l_2)\hat l_1^{\mu_\plus} s_\plus\text{  and  }
k_\minus\simeq s_\minus \hat l_1^{\mu_\minus}\mathcal{A}_\beta(l_2^{-1}, \hat l_1^{-1})l_2^{\lambda_\minus}\\
\left(\text{resp. }  
k_\plus\simeq l_2^{\lambda_\plus}\mathcal{A}_{\minus\beta}( l_2,\hat l_1)\hat l_1^{\mu_\plus} s_\plus\text{  and  }
k_\minus\simeq s_\minus \hat l_1^{\mu_\minus}\mathcal{A}_{-\beta}(\hat l_1^{-1},l_2^{-1}) l_2^{\lambda_\minus}\right).  
\end{multline*}  
Since $\partial M_n\simeq  k_\plus l_1^n k_\minus l_2^n$, we have the following:   
\begin{multline*}
\partial M_n\simeq l_2^{\lambda_\plus}\mathcal{A}_\beta(\hat l_1, l_2)\hat l_1^{\mu_\plus} s_\plus l_1^{n}
s_\minus \hat l_1^{\mu_\minus}\mathcal{A}_\beta(l_2^{-1}, \hat l_1^{-1})l_2^{\lambda_\minus}l_2^{n} 
\\
\left(\text{resp.\ }
\partial M_n\simeq l_2^{\lambda_\plus}\mathcal{A}_{-\beta}(l_2, \hat l_1)\hat l_1^{\mu_\plus} s_\plus l_1^{n}
s_\minus \hat l_1^{\mu_\minus}\mathcal{A}_{-\beta}(\hat l_1^{-1},l_2^{-1})l_2^{\lambda_\minus}l_2^{n}
\right).
\end{multline*} 
Further, from $\hat l_1=s_\plus l_1 s_\plus^{-1}$ and $s\hat l_1\simeq \hat l_1s$, we deduce that
\begin{multline*}
\partial M_n\simeq l_2^{\lambda_\plus}\mathcal{A}_\beta(\hat l_1, l_2)\hat l_1^{\mu+n}s\mathcal{A}_\beta(l_2^{-1}, \hat l_1^{-1})l_2^{\lambda_\minus+n}  
\\
\left(\text{resp.\ }
\partial M_n\simeq l_2^{\lambda_\plus}\mathcal{A}_{-\beta}(l_2, \hat l_1)\hat l_1^{\mu +n}s\mathcal{A}_{-\beta}(\hat l_1^{-1},l_2^{-1})l_2^{\lambda_\minus+n}
\right).
\end{multline*} 
The assertion then follows from the fact that 
$[l_2]=u, [s]=v^\delta$ and $[\hat l_1]=v^q$.  
\end{proof}
 
\subsubsection{On Theorem \ref{intro:teo:nonchar}\ref{intro:itm:infinite}}
Since a handlebody-knot exterior 
admits infinitely many essential annuli 
if and only if the handlebody-knot is of type $\Kl$, the following implies
Theorem \ref{intro:teo:nonchar}\ref{intro:itm:infinite}.

\begin{theorem}\label{teo:typeK}
If $\Compl\HK$ contains infinitely many separating, non-characteristic annuli, then all but at most four of them are of type $4$-$1$. 
\end{theorem}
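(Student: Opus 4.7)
The plan is to apply Lemmas \ref{lm:cri_fourone} and \ref{lm:cri_not_power_pimitive} to the explicit representation of $x_{A_n}$ as a word $w_n$ in the generating pair $\{u,v\}$ of $\pi_1(\HK)$ given by Lemma \ref{lm:homotopy_class_Mn}. By Section \ref{subsec:typeK} the separating non-characteristic annuli form the family $\{A_n\}_{n\in\mathbb{Z}}$, and each $x_{A_n}$ coincides with the conjugacy class of $w_n$; Lemma \ref{lm:cri_fourone} therefore reduces the theorem to showing that $w_n$ is not a power of a primitive element of $\pi_1(\HK)\cong \mathbb{Z}\ast\mathbb{Z}$ for all but at most four values of $n$.

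The analysis I would carry out splits according to the integer $\beta$ in the slope $\slope=2\rho/(2\beta+1)$ of $k_\plus$. For $\beta\geq 1$, the paired unit sequence associated with $k_\plus$ has all signs equal to $+1$ (because the lift $L_{\slope\plus}$ of the reference arc is a straight line crossing each component of $d_e\cup d_o$ with positive sign), so $w_n$ is a genuine alternating word in $v,u$ of length at least six. Its $v$-exponents contain both $+q$ (the leading $\epsilon_1$) and $-q$ (some later $\epsilon_i$), and its $u$-exponents contain both $+1$ and $-1$ from the two $\mathcal{A}_\beta$ factors. Since $q\geq 2$ by Lemma \ref{lm:basic_loop}, condition~(a) of Lemma \ref{lm:cri_not_power_pimitive} applies uniformly in $n$, yielding no exceptional values. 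A parallel argument, exploiting the sign pins $\upsilon_1=-1$, $\upsilon_{2|\beta|}=+1$ from Lemma \ref{lm:arc_decomposition}, disposes of the range $\beta\leq -2$.

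In the degenerate range $\beta\in\{0,-1\}$, the word $w_n$ reduces, after cyclic rotation and combination of adjacent $u$-blocks, to a length-two alternating word $v^{A_n}u^{B_n}$, where $A_n$ is an integer linear function of $n$ with leading coefficient $q$ and $B_n$ is a translate of $\lambda+n$. Here only condition~(b) of Lemma \ref{lm:cri_not_power_pimitive} applies, requiring $|A_n|,|B_n|>1$. Using Lemma \ref{lm:basic_loop}, which forces $0<\delta<q$ (and $\delta=1$ whenever $q=2$), I find that $|A_n|\leq 1$ holds for at most one value of $n$ when $q\geq 3$ and for at most two consecutive values when $q=2$, while $|B_n|\leq 1$ holds for exactly three values shifted from $\{-\lambda-1,-\lambda,-\lambda+1\}$. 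Aggregating gives at most $1+3=4$ exceptional $n$ whenever $q\geq 3$.

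The main obstacle will be the residual subcase $q=2$, $\beta\in\{0,-1\}$, where the naive enumeration produces five candidate exceptional $n$ and the bound of four is tight. Closing this gap requires either identifying a forced overlap between the two exceptional sets by exploiting structural relations between $\mu$ and $\lambda$ implicit in the setup of Section \ref{subsubsec:curves_nonchar}, or furnishing a direct geometric argument verifying that one of the five candidate annuli is of type $4$-$1$ without invoking Lemma \ref{lm:cri_not_power_pimitive}. Either refinement completes the proof.
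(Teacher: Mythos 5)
Your overall strategy coincides with the paper's: reduce to the word problem via Lemmas \ref{lm:cri_fourone}, \ref{lm:cri_not_power_pimitive} and \ref{lm:homotopy_class_Mn}, split on $\beta$, and count exceptional $n$. However, there is a genuine gap exactly where you flag the ``main obstacle,'' and it is not a residual technicality but the heart of the proof. For $\beta\in\{0,-1\}$ the word is conjugate to $v^{q(\mu+n)+\delta}u^{\lambda+n}$ (after the shift $\mu\mapsto\mu+2$, $\lambda\mapsto\lambda-2$ when $\beta=-1$), and the naive count of exceptional $n$ is $2+3=5$ when $q=2$ and $3+3=6$ when $q=1$ --- a case you omit entirely, since your appeal to $0<\delta<q$ fails there ($q=1$ forces $\delta=0$ by Lemma \ref{lm:basic_loop}). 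The paper closes this gap with a separate geometric claim, $\vert\mu-\lambda\vert\leq 1$: assuming $\vert\mu-\lambda\vert\geq 2$, the loop $\partial M_{-\lambda}$ is isotoped off $d_2$ into $\partial V$, its non-integral boundary slope forces $A_{-\lambda}$ to be of type $3$-$2$ii, hence $(\sphere,cV)$ trivial by Lemma \ref{lm:cri_threetwo_ii}, and then a homological computation ($\Theta=\pm2$, $\Gamma=p(q\Delta+\delta)+1=\pm2q$ versus $\vert p\vert\geq 2$) yields a contradiction. This inequality is precisely the ``structural relation between $\mu$ and $\lambda$'' you say would be needed; without it the bound of four is not established, so your proposal is incomplete at its decisive step.

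A secondary inaccuracy: for $\beta\geq 1$ you assert there are \emph{no} exceptional values because the word contains $v^{\pm q}$ and $u^{\pm 1}$. This ignores cancellation: Lemma \ref{lm:cri_not_power_pimitive} requires an alternating word with nonzero exponents, and when $q(n+\mu)+\delta=0$ (possible if $q=1$) the two alternating factors can collapse against each other --- indeed if all $\upsilon_i=+1$ then $\mathcal{A}_\beta(u^{-1},v^{-q})=\mathcal{A}_\beta(v^q,u)^{-1}$ and the word degenerates to $u^{\lambda+n}$, a power of a primitive. The paper accordingly imposes $q(n+\mu)+\delta\neq 0,q$ and $\lambda+n\neq 0,1$ even in this case, still giving at most four exceptional $n$; so this error does not break the bound, but it reflects the same oversight about reduced forms that makes the small-$q$ cases delicate.
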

\begin{proof}
In view of Lemmas \ref{lm:cri_fourone},
it suffices to show that, for at most four $n$'s, $[\partial M_n]$ is conjugate to a power of some primitive element in 
$\pi_1(\HK)$. 
\subsubsection*{Case $1$: $\beta\geq 0$}
By Lemma \ref{lm:homotopy_class_Mn}, $[\partial M_n]$ is conjugate to 
\[
\mathcal{A}_\beta(v^q,u)v^{q(n+\mu)+\delta}\mathcal{A}_\beta(u^{-1},v^{-q})u^{\lambda+n}.
\]  
\subsubsection*{Case $1.1$: $\beta>0$.} 
Suppose $n$ satisfies $q(n+\mu)+\delta\neq q,0$ and $\lambda+n\neq 0,1$. Then by Lemma \ref{lm:cri_not_power_pimitive},
$[\partial M_n]$ is not conjugate to a power of some primitive element, and 
hence $A_n$ is of type $4$-$1$ by Lemma \ref{lm:cri_fourone}. 
Note that if $q>1$, then $0<\delta<q$
by Lemma \ref{lm:basic_loop}, so 
$q(n+\mu)+\delta$ can never be $q$ or $0$; 
in this case, all but at most
two separating, non-characteristic 
annuli in $\Compl\HK$ is of type $4$-$1$.

If $\beta=0$, then 
$[\partial M_n]$ 
is conjugate to 
$v^{q(\mu+n)+\delta}u^{\lambda+n}$,
and we divide the case into two situations.
\subsubsection*{Case $1.2$: $\beta=0$, $q>2$.}
This implies 
$q(\mu+n)+\delta\neq 0$ and
there exists at most one 
$n$, denoted by $n_v$ if existing, 
such that $q(\mu+n)+\delta=\pm 1$.
Therefore, for any $n$ that satisfies $n\neq n_v$ and 
$n+\lambda\neq 0,\pm 1$, $[\partial M_n]$ is not conjugate to
a power of some primitive element, and 
hence $A_n$ is of type $4$-$1$.

Before proceeding with the case $q\leq 2$, we 
first observe some implication of the condition $\beta=0$.
Denote by the same letters the images of 
$\hat l_{1\plusminus}$, $\hat a_{2\plusminus}$ under $\pi:P\rightarrow \partial \HK$,
and observe that $\hat l_{1\plus}\bsimeq s\hat l_{1\minus} s^{-1}$,
and by the third assertion of Lemma \ref{lm:arc_decomposition} and Lemma \ref{lm:kplus} and Corollary \ref{cor:kminus}, $\beta = 0$ implies that 
\[\partial M_n\bsimeq  l_2^{\lambda_\plus}\hat a_{2\plus}^\rho   \hat l_{1\plus}^{\mu_\plus+n}s \hat l_{1\minus}^{\mu_\minus}\hat a_{2\minus}^{-\rho} l_2^{\lambda_\minus+n}
\bsimeq
  l_2^{\lambda_\plus}\hat a_{2\plus}^\rho \hat l_{1\plus}^{\mu+n}s \hat a_{2\minus }^{-\rho} l_2^{\lambda_\minus+n}.
\] 
In particular, if $n=-\lambda$, then  
\[\partial M_{-\lambda}\bsimeq l_2^{\lambda_\plus}\hat a_{2\plus}^\rho \hat l_{1\plus}^{\mu-\lambda}s \hat a_{2\minus }^{-\rho} l_2^{-\lambda_\plus} 
.\]
Since $\hat a_{2\plus}^\rho\hat l_{1\plus}^{\mu-\lambda}s \hat a_{2\minus }^{-\rho}$ 
is disjoint from $d_2$, we can isotope $\partial M_{-\lambda}$, 
without fixing $b_2$, away from $d_2$. Particularly, 
it may be assumed that $\partial M_{-\lambda}\subset\partial V$, where $V\subset\HK$ 
is the solid torus cut off by the disk 
$D_2\subset\HK$ bounded by $d_2$. 

Set $\Delta:=\mu-\lambda$. Then we have $[\partial M_{-\lambda}]=\Theta[d_1]+(q\Delta+\delta)[l_0]\in H_1(\partial V)$. 
To determine $\Theta$, we note that
$\partial M_{-\lambda}$ meets $l_1$ 
positively at one point and 
$[l_1]=p[d_1]+q[l_0]\in H_1(\partial V)$.
In other words, $\Theta$ satisfies
\[
\begin{vmatrix}
\Theta & q\Delta+\delta\\
p      & q
\end{vmatrix}
=1,
\] 
and hence $\Theta=p\Delta+\frac{p\delta+1}{q}$.

\centerline{\bf Claim: $\vert \Delta\vert\leq 1$.} 
We prove by contradiction, that is, assuming $\vert\Delta\vert\geq 2$. 
Observe that if $\Delta\geq 2$, then 
\[q\Delta+\delta\geq \Delta+\delta\geq 2,\]
and if $\Delta\leq -2$, then 
\[q\Delta+\delta=q(\Delta+1)+\delta-q\leq -2\] 
since $\delta-q\leq -1$ and $q(\Delta+1)\leq -1$. 
This implies the boundary slope of $M_{-\lambda}$ 
is not integral with respect to $(\sphere, V)$, and hence $M_{-\lambda}$,  
as well as $A_{-\lambda}$, is inessential in $\Compl V$. Therefore
$A_{-\lambda}$ is of type $3$-$2$ii, and 
thus by Lemma \ref{lm:cri_threetwo_ii}, 
$(\sphere,cV)$ is trivial.
\cout{
Since $q\Delta+\delta\neq 0,\pm 1$, $M_{-\lambda}\subset \Compl{V}$ cannot be compressible, so the M\"obius band $M_{-\lambda}$ and hence the annulus
$A_{-\lambda}$ are $\partial$-compressible in $\Compl{V}$. 

Now, suppose $(\sphere,cV)$ 
is a non-trivial knot. 
Then the torus induced by 
the union of $A_{-\lambda}$ and the annulus cut off by $\partial A_{-\lambda}$ from $\partial\HK$ 
is incompressible, contradicting
the atoroidality of $\pair$.
Therefore $(\sphere, cV)$ is trivial. 
}
On the other hand, since $\vert q\Delta+\delta\vert\geq 2$, $M_{-\lambda}\subset \Compl{V}$ is incompressible, so $M_{-\lambda}$ is $\partial$-compressible in $\Compl{V}$, 
%
and the exterior $\Compl{V}$
can be regarded as a regular neighborhood of $M_{-\lambda}$. In particular, we have $\Theta=\pm 2$.
Set 
\[\Gamma:=q\Theta=pq\Delta+p\delta+1=
p(q\Delta+\delta)+1,\] 
and note that $\Gamma=\pm 2q$.
Recall that $\vert p\vert \geq 2$ by Lemma \ref{lm:typeK_non_separating} and $q>0, \delta\geq 0$ with $q\pm \delta\geq 1$
by Lemma \ref{lm:basic_loop}.

Suppose $\Delta>0$; therefore $\Delta\geq 2$ 
by the assumption.
Then 
$q\Delta+\delta\geq 2q+\delta.$
If $p\geq 2$, then  
\[\Gamma\geq p(2q+\delta)+1\geq 
2(2q+\delta)+1=2q+2(q+\delta)+1>2q.\]
If $p\leq -2$, then 
\[\Gamma\leq p(2q+\delta)+1\leq 
-2(2q+\delta)+1=-2q-2(q+\delta)+1\leq -2q-1<-2q.\]
Either case contradicts the fact $\Gamma= \pm 2q$, so $\Delta$ cannot be positive,
and hence  
$\Delta\leq -2$ by the assumption.
This implies
$q\Delta+\delta\leq -2q+\delta<0$.
If $p\leq -2$, then   
\[\Gamma\geq p(-2q+\delta)+1\geq 
-2(-2q+\delta)+1=2q+2(q-\delta)+1>2q.\]
If $p\geq 2$, then 
\[\Gamma\leq p(-2q+\delta)+1\leq 
2(-2q+\delta)+1=-2q-2(q-\delta)+1\leq -2q-1<-2q.\] 
In either case, we obtain $\Gamma\neq \pm 2q$, a contradiction.  
Therefore the claim.

\subsubsection*{Case $1.3$: $\beta=0, q\leq 2$.}
Recall that 
$[\partial M_n]$ is conjugate to
$v^{q(\mu+n)+\delta}u^{\lambda+n}$. 
Moreover, we have $\delta=1$ if $q=2$, and $\delta=0$ if $q=1$, and $\vert\Delta\vert =\vert\mu-\lambda\vert\leq 1$. 

By Lemmas \ref{lm:cri_fourone} and \ref{lm:cri_not_power_pimitive},
if $\lambda=\mu$, then
for any $n$ such that $n+\lambda\neq 0,\pm 1$, $A_n$ is of type $4$-$1$. 
If $\lambda=\mu+1$, then observe that $\lambda+n\neq 0,\pm 1$ 
implies $\mu+n\neq 0,-1$ or $-2$ and
$2(\mu+n)+1\neq 1,-1$ or $-3$. 
Therefore, when $q=2$   
(resp.\ $q=1$), for any $n$ such that 
$\lambda+n\neq 0,\pm 1$ (resp.\ 
and $\mu+n\neq 1$), $A_n$ 
is of type $4$-$1$.
Similarly, if $\lambda=\mu-1$, 
then    
$\lambda+n\neq 0,\pm 1$ 
implies $\mu+n \neq 0,1$ or $2$ and
$2(\mu+n)+1\neq 1,3$ or $5$. Thus,
whether $q=1$ or $2$, 
for any $n$ such that 
$\lambda+n\neq 0,\pm 1$ and
$q(\mu+n)+\delta\neq -1$, $A_n$ is of type $4$-$1$. Therefore, at most four $A_n$'s are not of type $4$-$1$ in this case.


\subsubsection*{Case $2$: $\beta\leq -1$.} 
By Lemma \ref{lm:homotopy_class_Mn}, 
$[\partial M_n]$ is conjugate to 
\[\mathcal{A}_{-\beta}(u,v^q)v^{q(\mu+n)+\delta}\mathcal{A}_{-\beta}(v^{-q},u^{-1})u^{\lambda+n}.\] 
By the second assertion of Lemma \ref{lm:arc_decomposition}, there exists a paired unit sequence 
$A'_{\beta'}$ with $\beta'=-\beta-1$ 
such that
\[
\mathcal{A}_{-\beta}(u,v^q)=u^{-1}\mathcal{A}'_{\beta'}(v^q,u)v^q,\text{ and } 
\mathcal{A}_{-\beta}(v^{-q},u^{-1})=v^q\mathcal{A}'_{\beta'}(u^{-1},v^{-q})u^{-1}.
\] 
Set $\mu':=\mu+2, \lambda':=\lambda-2$.
Then $[\partial M_n]$ is conjugate to
\begin{multline*}
\mathcal{A}'_{\beta'}(v^q,u)v^{q(\mu+2+n)+\delta}\mathcal{A}'_{\beta'}(u^{-1},v^{-q})u^{\lambda-2+n}\\=
\mathcal{A}'_{\beta'}(v^q,u)v^{q(\mu'+n)+\delta}\mathcal{A}'_{\beta'}(u^{-1},v^{-q})u^{\lambda'+n} 
\end{multline*} 
with $\beta'\geq 0$.
\subsubsection*{Case $2.1$: $\beta<-1$ or $\beta=-1,q>2$.} 
This corresponds to $\beta'>0$ or 
$\beta'=0, q>2$, so the same argument for Cases $1.1$-$2$ applies with $\beta,\lambda,\mu$ there replaced by $\beta',\lambda',\mu'$.
\subsubsection*{Case $2.2$: $\beta=-1, q\leq 2$.} 
By the third assertion of Lemma \ref{lm:arc_decomposition}, Lemma \ref{lm:kplus} and Corollary \ref{cor:kminus}, we have 
\[\partial M_n\bsimeq
  l_2^{-1+\lambda_\plus}\hat a_{2\plus}^{-\rho} \hat l_{1\plus}^{\mu+2+n}s \hat a_{2\minus }^\rho l_2^{-1+\lambda_\minus+n},\]
and hence 
\[\partial M_{-\lambda+2}\bsimeq
  l_2^{-1+\lambda_\plus}\hat a_{2\plus}^{-\rho} \hat l_{1\plus}^{\mu-\lambda+4}s \hat a_{2\minus }^\rho l_2^{1-\lambda_\plus}.\]
Applying the same argument preceding Case $1.3$ to $M_{-\lambda+2}$, we obtain $\vert \mu-\lambda+4\vert=\vert \mu'-\lambda'\vert\leq 1$. The same proof for Case $1.3$ then goes through 
with $\lambda,\mu$ there replaced with $\lambda',\mu'$. This completes the proof.
%
\end{proof}

\subsubsection{Example}\label{subsubsec:example}
Recall from \cite{Wan:23p} that 
the handlebody-knot $\pair$ in 
Figs.\ \ref{fig:example_A}, \ref{fig:example_M} is of type $\Kl$
and equivalent to the mirror of $5_2$ in \cite{IshKisMorSuz:12}. 
Let $A, \disk A, l_1\subset V_1,l_2\subset V_2, M_n, A_n, n\in\mathbb{Z}, k_\plusminus, b_2$ be as in Sections \ref{subsubsec:curves_nonchar} and 
\ref{subsubsec:orientation}. 
Denote by $u,v$ the generating pair 
of $\pi_1(\HK)$ given by the loops $l_2$, $k_\plus l_1 k_\plus^{-1}$ based at $b_2$, respectively. Then 
$\partial M_n$ determines an element $v^nu^{n+1}\in \pi_1(\HK)$, $n\in \mathbb{Z}$. 
By Lemma \ref{lm:cri_fourone} and \cite[Theorem $1$]{Zie:70}, $A_n$ is of type $4$-$1$ if and only if 
$n\neq 1,0,-1,-2$. Furthermore, by Lemma \ref{lm:cri_threetwo_ii}, the annulus $A_0$ is of type $3$-$2$ii  
since $\partial M_0$ is trivial, and $A_{1}$ is of type $3$-$2$i 
since the core of $M_1$ in $\sphere$ 
is a trefoil knot (see Fig.\ \ref{fig:example_core}). 
On the other hand,
there is an automorphism $f$ of $\pair$ (see Fig.\ \ref{fig:example_symmetry}) that swaps $l_1,l_2$, and it may be assumed that $f(A_n)=A_{-n-1}$. 
This implies $A_{-1},A_{-2}$
are of type $3$-$2$ii and type $3$-$2$i,
respectively. In particular, 
the core of $M_n$ is an Eudave-Mu\~noz knot when $n>1$ or $n<-2$; for instance, 
the core of $M_2$ is the $(-2,3,7)$-pretzel knot, the simplest hyperbolic knot with a non-integral toroidal surgery.  
\begin{figure}[h]
\begin{subfigure}[t]{.48\linewidth}
\centering
\begin{overpic}[scale=.15, percent] 
{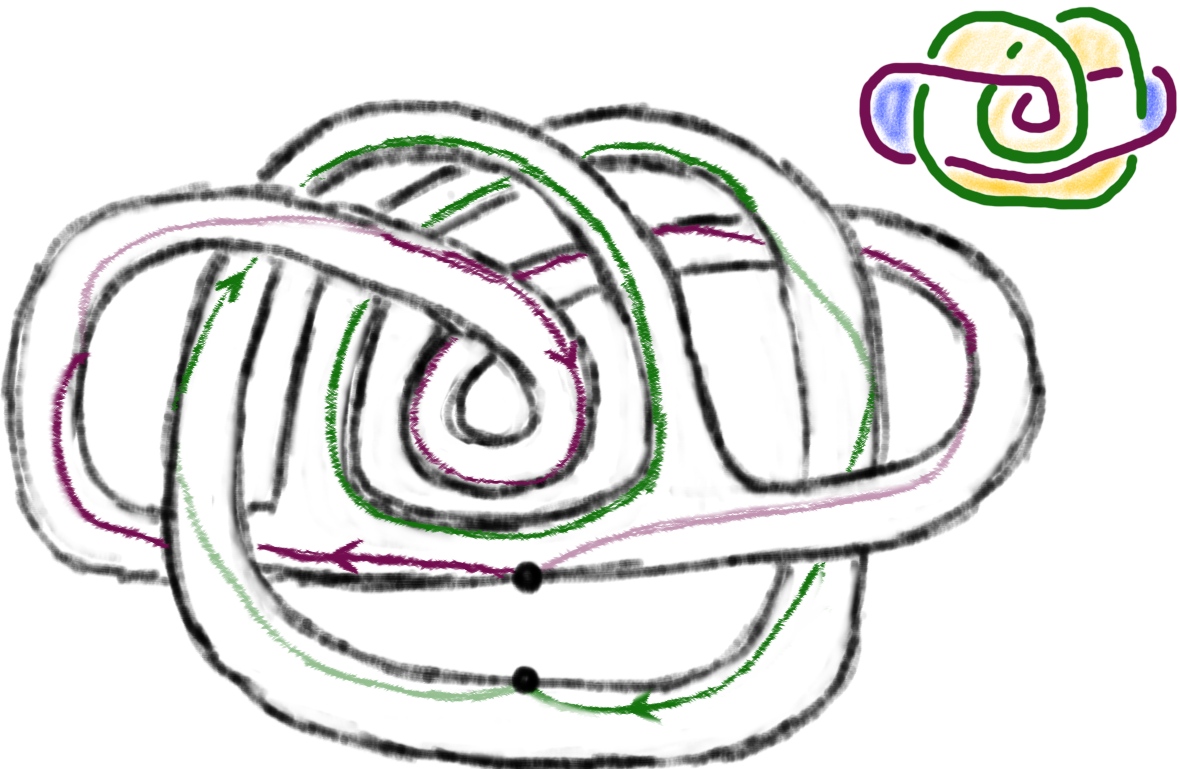}
\put(66,54){\large $A$:}
\put(43,2.8){\footnotesize $b_2$}
\put(42.5,12){\footnotesize $b_1$}
\put(65.5,7.5){$l_2$}
\put(25,20){$l_1$}
\end{overpic}
\caption{Type $3$-$3$ annulus $A$, $l_1,l_2,\disk A$.}
\label{fig:example_A}
\end{subfigure}
\begin{subfigure}[t]{.48\linewidth} 
\centering
\begin{overpic}[scale=.15, percent]{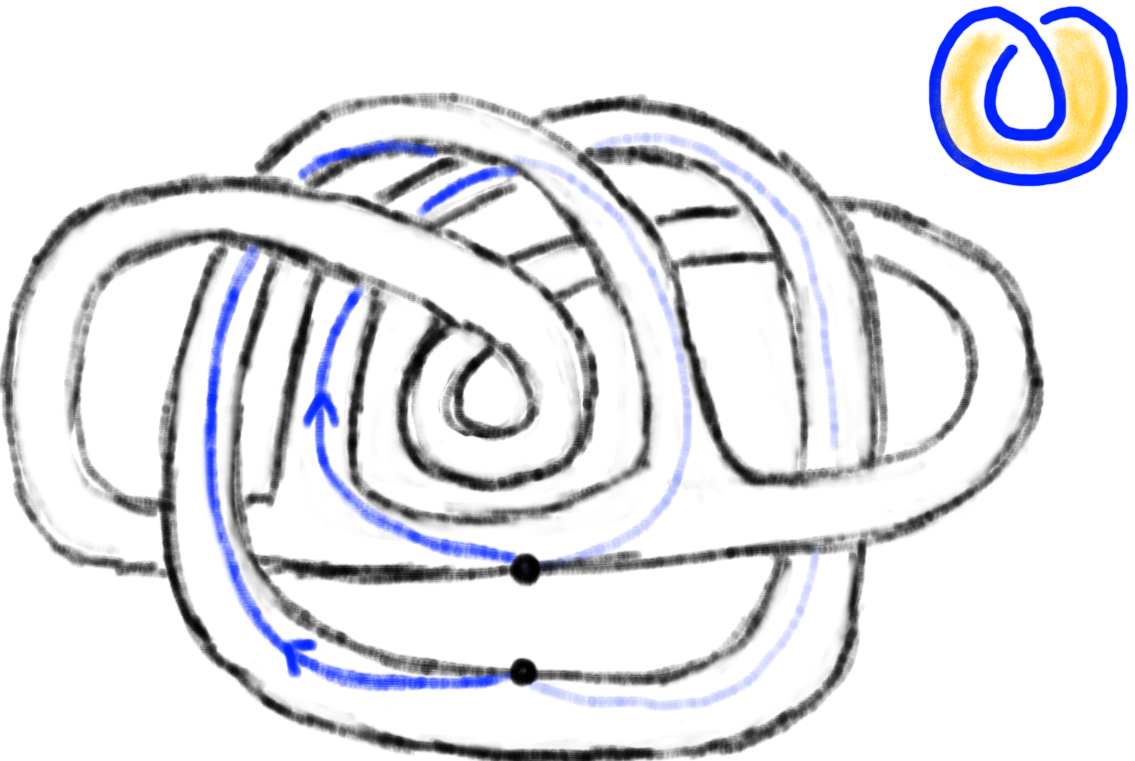}
\put(71,57){\large $M_0$:}
\put(46,3){\footnotesize $b_2$}
\put(46,12){\footnotesize $b_1$}
\put(19,9.5){\small $k_\plus$}
\put(25.5,22){\small $k_\minus$}
\end{overpic}
\caption{M\"obius band $M_0$, $k_\plusminus$.}
\label{fig:example_M}
\end{subfigure}
\begin{subfigure}[t]{.48\linewidth} 
\centering
\begin{overpic}[scale=.15, percent]{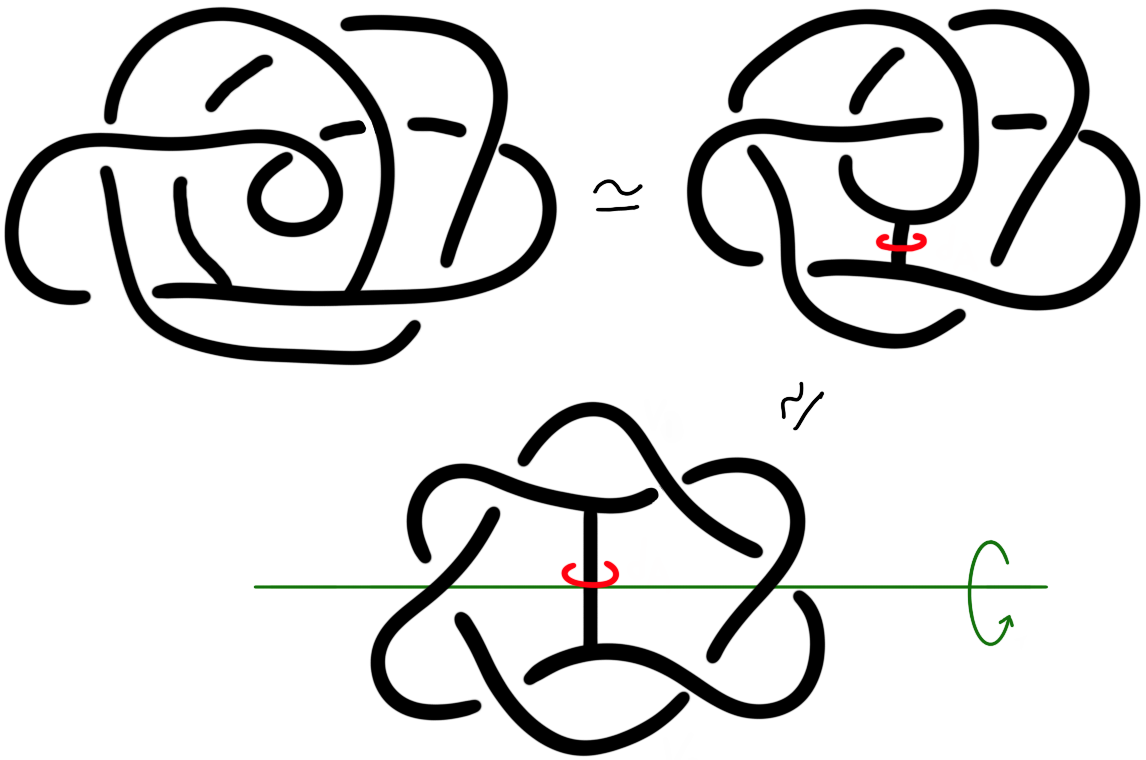}
\put(81.5,44.3){{\footnotesize $\disk A$}}
\put(54,17){$\disk A$}
\put(87,7.7){$\pi$}
\end{overpic} 
\caption{Automorphism.}
\label{fig:example_symmetry}
\end{subfigure}
\begin{subfigure}[t]{.48\linewidth} 
\centering
\begin{overpic}[scale=.15, percent]{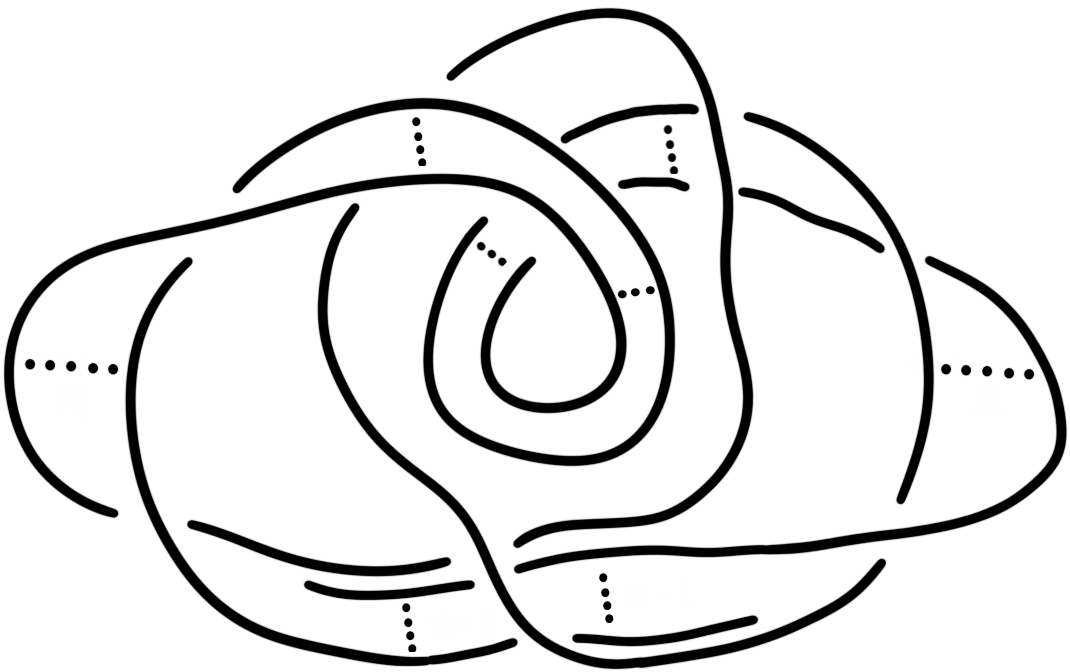}
\put(57.5,6.5){\footnotesize $n-1$} 
\put(27,4){\tiny $n-1$}
\put(90,24){$n$}
\put(5,24.5){$n$}
\end{overpic} 
\caption{Core of $M_n$, $n>0$.}
\label{fig:example_core}
\end{subfigure}
\caption{Example.}
\end{figure}


\section*{Acknowledgment}


\end{document}